\providecommand{\U}[1]{\protect\rule{.1in}{.1in}}
\newtheorem{theorem}{Theorem}
\newtheorem{corollary}[theorem]{Corollary}
\newtheorem{definition}[theorem]{Definition}
\newtheorem{assumption}[theorem]{Assumption}
\newtheorem{example}[theorem]{Example}
\newtheorem{lemma}[theorem]{Lemma}
\newtheorem{proposition}[theorem]{Proposition}
\newtheorem{remark}[theorem]{Remark}
\newenvironment{proof}[1][Proof]{\noindent\textbf{#1.} }{\ \rule{0.5em}{0.5em}}
\def\pot{^}
\def\menor{<}
\def\mayor{>}
\def\Sd{\mathbb{S}^{d-1}}
\def\rd{\mathbb{R}^d}
\def\Bd{\mathbb{B}^d}
\begin{document}
	
	\author{V\'{\i}ctor A. Vicente-Ben\'{\i}{}tez\\{\small Instituto de Matemáticas de la U.N.A.M. Campus Juriquilla}\\{\small Boulevard Juriquilla 3001, Juriquilla, Querétaro C.P. 076230 M\'{e}xico }\\ {\small  va.vicentebenitez@im.unam.mx } }
	\title{Generalized Poisson kernel and solution of the Dirichlet problem for the radial Schr\"odinger equation}
	\date{}
	\maketitle
	
	\begin{abstract}
		We present an explicit construction of the solution to the Dirichlet boundary value problem for the radial Schr\"odinger equation in the unit ball, with a complex-valued potential $V$ satisfying the condition $\int_0^1r|V(r)|dr<\infty$. The solution is based on the construction of an explicit orthogonal set of solutions for the radial equation. In the case of a Dirichlet problem with boundary data in $W^{\frac{1}{2},2}(\Sd)$, the solution is expressed as a series expansion in terms of the so-called formal spherical polynomials. We establish conditions for the solvability and uniqueness of the Dirichlet problem.  Based on this series representation, we introduce the concept of generalized Poisson kernel, develop its main properties, and investigate the conditions under which the Dirichlet problem, with a boundary condition being a complex Radon measure on $\mathbb{S}^{d-1}$, admits a solution in the sense of a distributional boundary values.
	\end{abstract}
	
	\textbf{Keywords: }Generalized Poisson kernel; complete system of solutions; radial Schr\"{o}dinger
	equation; Dirichlet problem; perturbed Bessel equation.
	\newline 
	
	\textbf{MSC Classification:} 35A24; 35C05; 35C10; 35C15; 35J10
	
	\section{Introduction}
	
	In the present work, we study the radial Schr\"{o}dinger equation 
	\begin{equation}\label{eq:radialintro}
		\left( \Delta-V(|x|)\right) u(x)=0,  \quad x\in \Bd,
	\end{equation}
	where $\Delta=\sum_{i=1}^{d}\frac{\partial^{2}}{\partial
		x_{i}^{2}}$ is the $d$-dimensional Laplacian, $\Bd$ denotes the unit ball in $\mathbb{R}^d$ with $d\geq 2$, and the potential $V$ is a complex-valued function
	depending only on the radial component $r=|x|$. We assume that $V$ satisfies the condition 
	\begin{equation}\label{eq:potentialconditionintro}
		\int_0^1r|V(r)|dr<\infty.
	\end{equation}
	The aim of this work is to provide an explicit construction of the solution to the Dirichlet problem associated with Eq. \eqref{eq:radialintro} in $\Bd$. We consider two formulations of the Dirichlet problem. The first is the standard formulation with {\it trace} boundary data $\varphi\in W^{\frac{1}{2},2}(\Bd)$, which consists of finding a weak solution $u\in W^{1,2}(\Bd)$ of \eqref{eq:radialintro} such that $\operatorname{tr}_{\Sd}u=\varphi$. The second formulation consists of finding a distributional solution $u\in L^1_{loc}(\Bd)$ of \eqref{eq:radialintro}, given a boundary data $\varphi$ interpreted as a distribution over $C^{\infty}(\Sd)$ (for instance, $\varphi\in L^2(\Sd)$). The solution $u$ is required to satisfy that its radial traces $u(r\cdot)$ converge to $\varphi$ in the weak-* topology of distributions as $r\rightarrow 1^{-1}$. This approach has taken on important relevance in the study of boundary value problems for elliptic equations and systems and in the study of their corresponding generalized Hardy spaces \cite{carmichael,estrada,straude,wblair1}. For example, in \cite{estrada}, it is shown how the Dirichlet problem with distributional boundary data for harmonic functions can be uniquely solved using the standard Poisson kernel. A problem of considerable interest is the explicit construction of the Poisson kernel for Schrodinger-type equations. In \cite{Quiao2}, a generalized Poisson kernel was explicitly constructed for Laplace and Helmholtz-type equations in cylindrical domains. This construction is based on the explicit computation of certain eigenfunctions of the Laplace operator. In  \cite{Quiao1}, the same author proposed a construction of a generalized Poisson integral representation for the radial Schr\"odinger equation in a cylindrical domain, under the assumption that the potential $V$ is positive and satisfies the condition $\lim_{r\rightarrow\infty}r^2V(r)=\kappa\in [0,\infty)$, and for continuous Dirichlet data. In \cite{frazier}, the conditions under which positive solutions of a Schrödinger equation admit a representation via a generalized Poisson kernel are established. This kernel is constructed using a Neumann operator series involving the harmonic Poisson and Green kernels. In the case when $V$ is analytic in the radial component $r$, it is known that solutions of \eqref{eq:radialintro} can be represented by means of an integral operator acting on harmonic functions \cite{bergman,gilbert3}. This result was extended in \cite{gilbert1} to potentials $V\in C^1[0,1]$ . The corresponding integral operator is called a {\it transformation} or {\it transmutation operator} for the radial Schr\"odinger equation \eqref{eq:radialintro}. A solution to the Dirichlet problem expressed in terms of the inverse of the transmutation operator is provided in \cite{gilbert4}. In \cite{mineradial1}, it is shown that the transmutation operator transforms the harmonic Bergman space onto the space of $L^2$ classical solutions of \eqref{eq:radialintro}, and an orthogonal basis of solutions takes the form
	\begin{equation}\label{eq:formapolynomialsintro}
		U_m(x)=|x|^{\frac{1-d}{2}}y_m(|x|)p\left(\frac{x}{|x|}\right),
	\end{equation}
	where $p$ is a spherical harmonic of degree $m$ and $\{y_m\}_{m=0}^{\infty}$ are the solutions of the perturbed Bessel equations $-y''_m+\left(\frac{\ell_m(\ell_m+1)}{r^2}+V(r)\right)y_m=0$ in $(0,1)$, with $\ell_m=m+\frac{d-3}{2}$.  The existence of such a system relies entirely on the existence of the transmutation operator, while the completeness of the system depends intrinsically on the fact that the transmutation operator has a continuous inverse. However, for the case when $V\not\in C^1[0,1]$, there is no explicit result that guarantees the existence of such a natural transmutation operator or its analytical properties.
	
	The first goal of this work is to extend the construction of a system of the form \eqref{eq:formapolynomialsintro} to the case where $V$ is complex valued and satisfies \eqref{eq:potentialconditionintro}. A decay condition of this type ensures the existence of regular solutions of the perturbed Bessel equations \cite{neumann2}.  An explicit construction of the solutions by means of the {\it spectral parameter power series} is presented in \cite{sppsbessel}, for potentials $V\in C(0,1]$ satisfying the asymptotic condition $V(r)=O(r^{\alpha})$ as $r\rightarrow 0^+$, for some $\alpha>-2$. We present a generalization of the construction presented in \cite{mineradial1}, which only uses the fact that the potential $V$ satisfies \eqref{eq:potentialconditionintro}. Next, we construct an orthogonal system of solutions of the form \eqref{eq:formapolynomialsintro} that we call the {\it formal spherical polynomials}. For $d\geq 3$, the orthogonality is given with respect to the inner product of $W^{1,2}(\Bd)$. It is important to note that although explicit solutions can be obtained via the separation of variables for certain regular potentials, our approach provides a general solution for any complex-valued potential satisfying the condition \eqref{eq:potentialconditionintro}. In addition, we present a constructive algorithm and establish the conditions under which the resulting series converges.
	
	Our second aim is the explicit construction of the solution to the Dirichlet problem. Once the orthogonal system is obtained, we analyze the Dirichlet problem with boundary data $\varphi\in W^{\frac{1}{2},2}(\Bd)$, and show that the solution can be expressed as a Fourier-series in terms of the formal spherical polynomials, with convergence in $W^{1,2}(\Bd)$. We establish additional conditions on the potential $V$ to ensure the uniqueness of the solution, and we conclude that the formal spherical polynomials form an orthogonal basis for the space of weak solutions in $W^{1,2}(\Bd)$ of Eq. \eqref{eq:radialintro}.
	
	From the obtained series representation, we derive a formula for the generalized Poisson kernel and show that, when $\varphi$ is a distribution on $C(\Sd)$ (i.e., a Radon measure on $\Sd$), Eq. \eqref{eq:radialintro} admits a distributional solution $u\in L^2(\Bd)\cap C(\Bd)$ given in terms of the Poisson integral, whose distributional boundary value is precisely $\varphi$. For the case $d=2$, this result extends to boundary values that are distributions over $C^{\infty}(\mathbb{S}^1)$.
	
	The paper is structured as follows. Section 2 provides background on the main properties of spherical gradients, spherical harmonics, and Sobolev spaces on the sphere. Section 3 is devoted to the explicit construction of an orthogonal system of solutions for equation \eqref{eq:radialintro}, including a procedure for constructing solutions to the associated Bessel equations. In Section 4, we obtain the solution to the Dirichlet problem with trace boundary data and establish the conditions on the potential $V$ required for uniqueness. Section 5 presents the construction of the generalized Poisson kernel and establishes the solution to the distributional boundary value problem. Finally, Section 6 discusses the details of these constructions in the case $d=2$. Appendix A contains technical proofs of certain properties of the gradient and spherical harmonics.

	\section{Background on spherical harmonics and Sobolev spaces on the sphere}
	
	Let $d\in \mathbb{N}$ with $d\geq 2$.  The open ball and the sphere with radius $r>0$ centered at $x_0\in \rd$ are denoted by $B_r^d(0)$ and $S_r^d(0)$. As is usual, we denote the unit ball and sphere by $\Bd=B_1^d(0)$ and $\Sd=S_1^d(0)$.
	Given a topological linear space $X$, its topological dual is denoted by $X'$, and the action of an element $f\in X'$ on $x\in X$ is denoted by $(f|x)_X$. Throughout the text, we use the notation $\mathbb{N}_0=\mathbb{N}\cup\{0\}$.
	For $1\leq p\leq \infty$, the space of $L\pot p$ functions on the ball with respect to the Lebesgue measure is denoted by $L\pot p(\Bd)$, while the corresponding space in the sphere, with the Borel surface measure $d\sigma$, is denoted by $L\pot p(\Sd)$. The Sobolev space of functions in $L\pot p(\Bd)$ whose distributional partial derivatives up to order $k\in \mathbb{N}$ also belong to $L\pot p(\Bd)$ is denoted by $W\pot{k,p}(\Bd)$. The corresponding local spaces are denoted by $L_{loc}^p(\Bd)$ and $W^{k,p}_{loc}(\Bd)$. For $1\menor p\menor \infty$, the space $W\pot{1,p}_0(\Bd)$ is the closure of $C_0\pot{\infty}(\Bd)$ in $W\pot{1,p}(\Bd)$, and $W\pot{-1,p}(\Bd):=(W_0\pot{1,p}(\Bd))'$. The space of test functions $C_0\pot{\infty}(\Bd)$ is denoted by $\mathscr{D}(\Bd)$, while for the sphere, we set $\mathscr{D}(\Sd)=C\pot{\infty}(\Sd)$. The surface area of $\Sd$ is denoted by $\omega_{d-1}$.  
	
	Every point $\xi=(\xi_1,\dots,\xi_d)\in \Sd$ can be written in terms of the generalized spherical coordinates
	\begin{equation}\label{eq:sphericalcoor}
		\xi_i=\begin{cases}
			\prod_{j=1}\pot{d-1}\sin\theta_{j},& \text{ if  }\; i=1,\\
			\cos\theta_{i-1}\prod_{j=i}\pot{d-1}\sin\theta_j,& \text{ if   }\; 2\leq i\leq d,
		\end{cases}
	\end{equation}
	where $0\leq \theta_1\leq 2\pi$ and $0\leq \theta_j\leq \pi$, $j=2,\dots,d-1$ (here, we use the convention that $\prod_{\emptyset}=1$). We denote by $\widehat{\mathbf{r}}$ the unit vector that corresponds exactly to the spherical coordinates \eqref{eq:sphericalcoor} (that is, $\widehat{\mathbf{r}}$ is the unit normal vector on $\Sd$ at the point $\xi(\theta_1,\dots, \theta_{d-1})$). On the other hand, let $\vec{\boldsymbol{\theta}}_j=\left(\frac{\partial \xi_i}{\partial \theta_j}\right)_{i=1}\pot{d}$, and $\Theta_j:=|\vec{\boldsymbol{\theta}_j}|$, $j=1,\dots,d-1$. 
	
	The following lemma summarizes the properties of the spherical coordinates.
	
	\begin{lemma}\label{Lemma:sphericalgradient}
		If we denote $\widehat{\boldsymbol{\theta}_j}:=\frac{1}{\Theta_j}\vec{\boldsymbol{\theta}}_j$, $j=1,\dots, d-1$, then $\{\widehat{\mathbf{r}},\widehat{\boldsymbol{\theta}}_1,\dots,\widehat{\boldsymbol{\theta}}_{d-1}\}$ forms an orthonormal basis, and for $u\in W\pot{1,2}(\Bd)$, its gradient $\nabla u$ can be decomposed as
		\begin{equation}\label{eq:sphericalgrad1}
			\nabla u=\frac{\partial u}{\partial r}\widehat{\mathbf{r}}+\frac{1}{r}\nabla_{\Sd}u,
		\end{equation}
		where $\nabla_{\Sd}$ is called the {\it spherical gradient} given by
		\begin{equation}\label{eq:sphgrad2}
			\nabla_{\Sd}u= \sum_{j=1}\pot{d-1}\frac{1}{\Theta_j}\frac{\partial u}{\partial \theta_j}\widehat{\boldsymbol{\theta}}_j.
		\end{equation}
		
	\end{lemma}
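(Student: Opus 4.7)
The plan is to split the proof into two parts: (i) the orthonormality of the frame $\{\widehat{\mathbf{r}}, \widehat{\boldsymbol{\theta}}_1, \dots, \widehat{\boldsymbol{\theta}}_{d-1}\}$ at each non-degenerate point, and (ii) the gradient decomposition \eqref{eq:sphericalgrad1}.

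For part (i), each vector is unit by construction, so only orthogonality must be verified. Since $|\xi|^2 \equiv 1$, differentiating with respect to $\theta_j$ yields $\xi \cdot \vec{\boldsymbol{\theta}}_j = 0$, giving $\widehat{\mathbf{r}} \perp \widehat{\boldsymbol{\theta}}_j$ for every $j$. The mutual orthogonality $\vec{\boldsymbol{\theta}}_j \cdot \vec{\boldsymbol{\theta}}_k = 0$ for $j \neq k$ reduces to a direct calculation: from the nested product form \eqref{eq:sphericalcoor}, the coordinate $\xi_i$ depends on $\theta_j$ only via a single $\sin\theta_j$ factor (if $j \geq \max\{i,2\}$) or via a $\cos\theta_{i-1}$ factor (if $j = i-1$). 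Assuming without loss of generality $j < k$, factoring out the common trigonometric blocks from $\partial_{\theta_j} \xi_i$ and $\partial_{\theta_k}\xi_i$ and summing over $i$ yields a telescoping identity of the type $\cos\theta_k \sin\theta_k - \sin\theta_k \cos\theta_k = 0$ after pairing the index $i = k$ with the collection $i > k$. This is the single concrete trigonometric computation required.

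For part (ii), the map $\Phi(r,\theta_1,\ldots,\theta_{d-1}) = r\,\xi(\theta_1,\ldots,\theta_{d-1})$ is a $C^{\infty}$-diffeomorphism from $(0,1) \times (0,2\pi) \times (0,\pi)^{d-2}$ onto $\Bd$ minus a set of measure zero. Its Jacobian columns are $\partial_r \Phi = \widehat{\mathbf{r}}$ and $\partial_{\theta_j}\Phi = r\Theta_j\,\widehat{\boldsymbol{\theta}}_j$, which by part (i) are pairwise orthogonal with scale factors $h_r = 1$ and $h_{\theta_j} = r\Theta_j$. The standard gradient formula in orthogonal curvilinear coordinates then yields \eqref{eq:sphericalgrad1}–\eqref{eq:sphgrad2} pointwise for $u \in C^{\infty}(\overline{\Bd})$.

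The main obstacle is extending the formula from the smooth case to $u \in W^{1,2}(\Bd)$. The strategy is to approximate $u$ in $W^{1,2}(\Bd)$ by a sequence $u_n \in C^{\infty}(\overline{\Bd})$ (this density is standard) and to interpret $\partial_r$ and $\partial_{\theta_j}$ as distributional derivatives of $u_n \circ \Phi$ on the parameter domain, related to the Euclidean derivatives of $u_n$ by the chain rule encoded in \eqref{eq:sphericalgrad1}–\eqref{eq:sphgrad2}. Passing to the limit in $L^2$ shows that the identity persists almost everywhere on $\Bd$, once one verifies that the exceptional set where $\Phi$ fails to be a diffeomorphism has Lebesgue measure zero, which follows from its explicit description as a half-hyperplane. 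This density argument completes the proof.
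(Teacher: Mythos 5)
Your proposal follows essentially the same route as the paper: verify by direct computation that the columns of the Jacobian of $\Phi(r,\theta)=r\xi$ are pairwise orthogonal with scale factors $1$ and $r\Theta_j$, then read off \eqref{eq:sphericalgrad1}--\eqref{eq:sphgrad2} from the inverse Jacobian (the standard orthogonal curvilinear gradient formula), with your density argument for $W^{1,2}$ being a small addition the paper leaves implicit. The only caveat is that the actual cancellation in $\vec{\boldsymbol{\theta}}_j\cdot\vec{\boldsymbol{\theta}}_k=0$ proceeds by repeated collapse of sums via $\cos^2+\sin^2=1$ rather than the single $\cos\theta_k\sin\theta_k-\sin\theta_k\cos\theta_k$ pairing you describe, but this is a detail of bookkeeping, not of strategy.
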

	Note that $\nabla_{\Sd}$ acts only on the coordinates $\theta_1,\dots,\theta_{d-1}$, and is the orthogonal projection of $\nabla u$ onto the tangent space $\mathcal{T}_{\xi}(\Sd)$ at the point $\xi\in \Sd$.  This fact is well known for dimensions $d=2,3$. The existence of the spherical gradient in general dimensions is established in \cite{atkinsonsph}. However, as explicit computations in spherical coordinates are not provided there, we include them in Appendix \ref{Sec:Appendix} for the reader's convenience.
	
	A well-known result concerns the decomposition of the Laplace operator.  Let $\Delta:=\sum_{j=1}\pot{d}\frac{\partial}{\partial x_j}$ be the Laplacian in $\rd$. It is known that $\Delta$ can be written in the form 
	\begin{equation}\label{eq:sphelaplace}
		\Delta= \frac{1}{r\pot {d-1}}\frac{\partial}{\partial r}r\pot{d-1}\frac{\partial}{\partial r}+\frac{1}{r\pot 2}\Delta_{\Sd},
	\end{equation}
	where $\Delta_{\Sd}$ is the {\it spherical Lapacian} (also known as the {\it Laplace-Beltrami operator}, see \cite[Lemma 1.4.1]{day} or \cite[Sec. 3.1]{atkinsonsph}), acting only on the coordinates $\theta_1,\dots,\theta_{d-1}$. As is well known, $\int_{\Bd}f(x)dv_x=\int_0^1r^{d-1}\int_{\Sd}u(r\xi)d\sigma_{\xi}dr$ for $u\in L^1(\Bd)$.
	
	\begin{remark}
		Let $u\in C^1(\overline{\Bd})$. Since the exterior unit normal of $\Sd$ at the point $\xi(\theta_1,\dots,\theta_{d-1})$ is precisely $\widehat{\mathbf{n}}$, then the normal derivative $\frac{\partial u}{\partial \nu}$ is given by
		\begin{equation}\label{eq:normalder}
			\frac{\partial u}{\partial \nu}=\nabla u\cdot \widehat{\mathbf{n}}\big{|}_{\Sd}=\frac{\partial u}{\partial r}\bigg{|}_{r=1}.
		\end{equation}
	\end{remark}
	
	The set of all homogeneous harmonic polynomials of degree $m\in \mathbb{N}_0$ in $\rd$ is denoted by $\mathcal{H}_m(\rd)$. For the case $d=2$, $\mathcal{H}_m(\mathbb{R}^2)$ is generated by the complex monomials $\{z^m,\overline{z}^m\}$. For $d\geq 3$, $\mathcal{H}_m(\rd)$ is finite dimensional and $d_m=\dim \mathcal{H}_m(\rd)$ is given by
	\[
	d_m=\begin{cases}
		1, & \text{ if  } m=0,\\
		d, & \text{ if  } m=1,\\
		\binom{d+m-1}{d-1}-\binom{d+m-3}{d-1}, & \text{ if  } m\geq 2,
	\end{cases}
	\]
	(see \cite[Prop. 5.8]{axler}. The set of the spherical harmonics of degree $m$ is given by $\mathcal{H}_m(\Sd):=\{p|_{\Sd}\,|\, p\in \mathcal{H}_m(\rd)\}$. It holds that $\dim \mathcal{H}_m(\Sd)=d_m$. Fix an orthonormal basis $\{Y_j\pot{(m)}\}_{j=1}\pot{dm}$ for $\mathcal{H}_m(\Sd)$. The following lemma establishes the orthogonality of the spherical harmonics and their spherical gradients.
	
	\begin{lemma}\label{Prop:sphericalprop1}
		\begin{itemize}
			\item[(i)] If $m\neq n$, then $\mathcal{H}_m(\Sd)\perp_{L\pot 2(\Sd)} \mathcal{H}_m(\Sd)$.
			\item[(ii)] The spherical harmonics are the eigenfunctions of the spherical Laplacian, that is, if $m\in \mathbb{N}_0$ and $p\in \mathcal{H}_m(\Sd)$, then 
			\begin{equation}\label{eq:SphericalEigen}
				\Delta_{\Sd} p=-m(m+d-2)p.
			\end{equation}
			\item[(iii)] $L\pot 2(\Sd)=\bigoplus_{m=0}\pot{\infty}\mathcal{H}_m(\Sd)$.
			\item[(iv)] If $m\in \mathbb{N}$ and $p,q\in \mathcal{H}_m(\rd)$, then
			\begin{equation}\label{eq:relatriongradients1}
				\int_{\Sd}p(\xi)\overline{q(\xi)}d\sigma_{\xi}=\frac{1}{m(d+2m-2)}\int_{\Sd}\nabla p(\xi)\overline{\nabla q(\xi)}d\sigma_{\xi}.
			\end{equation}
			\item[(v)] If $p\in \mathcal{H}_m(\Sd)$ and $q\in \mathcal{H}_m(\Sd)$ with $n\neq m$, then 
			\[
			\int_{\Sd}\nabla_{\Sd}p(\xi)\overline{\nabla_{\Sd}q(\xi)}d\sigma_{\xi}=0.
			\]
			\item[(vi)] For $m\geq 1$, and $j,i\in \{1,\dots,d_m\}$, we get
			\begin{equation}\label{eq:ortogonalgrad1}
				\int_{\Sd}\nabla_{\Sd}Y_j\pot{(m)}(\xi)\overline{\nabla_{\Sd}Y_k\pot{(m)}(\xi)}d\sigma_{\xi}=0,\quad \text{if  } j\neq k,       
			\end{equation}
			and 
			\begin{equation}\label{eq:normgradspherical}
				\int_{\Sd}|\nabla_{\Sd}Y_j\pot{(m)}(\xi)|\pot 2d\sigma_{\xi}=m(m+d-2).
			\end{equation}
		\end{itemize}
	\end{lemma}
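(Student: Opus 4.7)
The plan rests on three tools: Euler's identity $x\cdot\nabla p=m\,p$ for a function $p$ homogeneous of degree $m$, Green's identities on $\Bd$, and the radial--angular decomposition \eqref{eq:sphericalgrad1}. For (ii), I would write a homogeneous harmonic polynomial as $p(x)=r^m p(\xi)$ and substitute into \eqref{eq:sphelaplace}: the radial part contributes $\frac{1}{r^{d-1}}\partial_r(r^{d-1}\partial_r r^m)\,p(\xi)=m(m+d-2)\,r^{m-2}p(\xi)$, while the angular part contributes $r^{m-2}\Delta_{\Sd}p(\xi)$, so harmonicity of $p$ forces $\Delta_{\Sd}p=-m(m+d-2)p$. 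For (i), I would apply Green's second identity in $\Bd$ to the harmonic extensions of $p\in\mathcal{H}_m(\Sd)$ and $q\in\mathcal{H}_n(\Sd)$: the volume integral vanishes by harmonicity, while \eqref{eq:normalder} combined with Euler gives $\partial_\nu p|_{\Sd}=m\,p$ and $\partial_\nu\bar q|_{\Sd}=n\,\bar q$, so $(m-n)\int_{\Sd}p\bar q\,d\sigma=0$. Item (iii) is the classical Peter--Weyl-type decomposition and I would cite it from \cite{axler} or \cite{atkinsonsph}.

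For (iv), Green's first identity in $\Bd$ combined with harmonicity of $q$ and Euler's identity yields $\int_{\Bd}\nabla p\cdot\overline{\nabla q}\,dv=m\int_{\Sd}p\bar q\,d\sigma$. Independently, since $\nabla p\cdot\overline{\nabla q}$ is homogeneous of degree $2m-2$, passing to polar coordinates gives $\int_{\Bd}\nabla p\cdot\overline{\nabla q}\,dv=\frac{1}{d+2m-2}\int_{\Sd}\nabla p\cdot\overline{\nabla q}\,d\sigma$. Equating both expressions proves \eqref{eq:relatriongradients1}.

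For (v) and (vi), I would evaluate \eqref{eq:sphericalgrad1} at $r=1$ and use the orthogonality of $\widehat{\mathbf{r}}$ to the tangent frame to obtain the pointwise identity $\nabla p\cdot\overline{\nabla q}\big|_{\Sd}=mn\,p\bar q+\nabla_{\Sd}p\cdot\overline{\nabla_{\Sd}q}$. For (v) with degrees $m\neq n$, the mixed-degree analogue of the argument in (iv) (same Green's identity, same polar change of variables) produces $\int_{\Sd}\nabla p\cdot\overline{\nabla q}\,d\sigma=0$, and by (i) the term $mn\int_{\Sd}p\bar q\,d\sigma$ also vanishes. For (vi) applied to the orthonormal basis $\{Y_j^{(m)}\}$, inserting (iv) into the pointwise identity gives $\int_{\Sd}\nabla_{\Sd}Y_j^{(m)}\cdot\overline{\nabla_{\Sd}Y_k^{(m)}}\,d\sigma=\bigl[m(d+2m-2)-m^2\bigr]\delta_{jk}=m(m+d-2)\,\delta_{jk}$, which simultaneously proves \eqref{eq:ortogonalgrad1} and \eqref{eq:normgradspherical}. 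The only slightly delicate point is the bookkeeping of the homogeneity exponents under polar coordinates; there is no serious conceptual obstacle, since (iii) is standard and every other item reduces to Green's identities combined with Euler's identity.
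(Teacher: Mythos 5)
Your proposal is correct and follows essentially the same route as the paper: for the substantive items (v) and (vi) you use exactly the paper's argument — harmonic extensions, Green's first identity with Euler's relation $\partial_\nu P|_{\Sd}=mP$, the radial--tangential splitting of $\nabla P\cdot\overline{\nabla Q}$, and polar coordinates (or equivalently evaluation at $r=1$ after (iv)). The only difference is cosmetic: you supply the standard short proofs of (i), (ii) and (iv), which the paper simply cites from \cite{axler} and \cite{day}, and your exponent bookkeeping is consistent.
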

	
	The proof is given in Appendix \ref{Sec:Appendix}.
	
	Let $\varphi\in L\pot2(\Sd)$. The Fourier coefficients of $\varphi$ with respect to the orthonormal basis $\{\{Y_j\pot{(m)}\}_{j=1}\pot{d_m}\}_{m=0}\pot{\infty}$ are denoted by
	\[
	\widehat{\varphi}_{m,j}:=\int_{\Sd}\varphi(\xi)\overline{Y_j\pot{(m)}}(\xi)d\sigma_{\xi},\quad m\in \mathbb{N}_0,\;  j=1,\dots, d_m. 
	\]
	
	There exists the bounded trace operator $\operatorname{tr}_{\Sd}:W\pot{1,2}(\Bd)\rightarrow L\pot{2}(\Sd)$, which satisfies $\operatorname{tr}_{\Sd}u=u|_{\Sd}$ for all $u\in C(\overline{\Bd})$ and $\operatorname{tr}_{\Sd}u=0$ iff $u\in W_0\pot{1,2}(\Bd)$. The image of $W\pot{1,2}(\Bd)$ under the trace operator is denoted by $W\pot{\frac{1}{2},2}(\Sd)$ and is called the Sobolev space of order $\frac{1}{2}$ (see \cite[pp. 102-106]{mclean}). The following characterization of the space of traces is given in terms of the Fourier coefficients (see \cite[Sec. 3.8]{atkinsonsph}).
	
	\begin{proposition}
		The space $W\pot{\frac{1}{2},2}(\Sd)$ consists of the functions $\varphi\in L\pot{2}(\Sd)$ satisfying the condition
		\begin{equation}\label{eq:sobolevtrace}
			\sum_{m=0}\pot{\infty}\sum_{j=1}\pot{d_m}\sqrt{m(m+d-2)}|\widehat{\varphi}_{m,j}|\pot 2\menor \infty. 
		\end{equation}
		The space $W\pot{\frac{1}{2},2}(\Sd)$ is a Banach space with the norm
		\begin{equation*}
			\|\varphi\|_{W^{\frac{1}{2},2}(\Sd)}:=\left\{\|\varphi\|_{L^2(\Sd)}^2+\sum_{m=0}\pot{\infty}\sum_{j=1}\pot{d_m}\sqrt{m(m+d-2)}|\widehat{\varphi}_{m,j}|\pot 2\right\}^{\frac{1}{2}}.
		\end{equation*}
	\end{proposition}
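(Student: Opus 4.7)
The natural route is to exhibit an explicit extension realizing the trace, and then exploit the Dirichlet-energy minimality of the harmonic extension for the converse. For the sufficiency, assume $\varphi\in L^2(\Sd)$ has Fourier expansion $\varphi=\sum_{m,j}\widehat{\varphi}_{m,j}Y_j^{(m)}$ satisfying \eqref{eq:sobolevtrace}, and define the candidate harmonic extension
\[
u(r\xi)=\sum_{m=0}^{\infty}\sum_{j=1}^{d_m}r^{m}\,\widehat{\varphi}_{m,j}\,Y_j^{(m)}(\xi),\qquad 0\leq r\leq 1,\ \xi\in\Sd.
\]
Using the gradient decomposition \eqref{eq:sphericalgrad1}, the orthogonality statements in Lemma \ref{Prop:sphericalprop1}(i),(v),(vi) (in particular \eqref{eq:normgradspherical}), and the polar-coordinate volume element, a term-by-term computation on partial sums yields
\[
\|u\|_{L^2(\Bd)}^2=\sum_{m,j}\frac{|\widehat{\varphi}_{m,j}|^2}{2m+d},\qquad \|\nabla u\|_{L^2(\Bd)}^2=\sum_{m,j}\frac{m^2+m(m+d-2)}{2m+d-2}\,|\widehat{\varphi}_{m,j}|^2.
\]
The coefficient in the second sum is $\sim m$ as $m\to\infty$, hence comparable to $\sqrt{m(m+d-2)}$, so \eqref{eq:sobolevtrace} gives $u\in W^{1,2}(\Bd)$. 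Continuity of $\operatorname{tr}_{\Sd}$ and the fact that the partial sums are smooth up to the boundary with trace equal to the partial sums of $\varphi$ then give $\operatorname{tr}_{\Sd}u=\varphi$, so $\varphi\in W^{\frac{1}{2},2}(\Sd)$.

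For the necessity, let $u\in W^{1,2}(\Bd)$ with $\operatorname{tr}_{\Sd}u=\varphi$. Expand in spherical harmonics slice by slice,
\[
u(r\xi)=\sum_{m,j}u_{m,j}(r)Y_j^{(m)}(\xi),\qquad u_{m,j}(r):=\int_{\Sd}u(r\xi)\overline{Y_j^{(m)}(\xi)}\,d\sigma_{\xi},
\]
and repeat the gradient computation using Lemma \ref{Prop:sphericalprop1} to obtain
\[
\|\nabla u\|_{L^2(\Bd)}^2=\sum_{m,j}\int_0^1 r^{d-1}\!\left(|u'_{m,j}(r)|^2+\frac{m(m+d-2)}{r^2}|u_{m,j}(r)|^2\right)dr.
\]
Since $\widehat{\varphi}_{m,j}=u_{m,j}(1)$ (by continuity of the trace and Parseval), the problem is reduced to a one-dimensional weighted trace inequality of the form $\sqrt{m(m+d-2)}\,|u_{m,j}(1)|^2\leq C\,I_{m,j}$, where $I_{m,j}$ is the integral above. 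This bound follows by computing $|u_{m,j}(1)|^2-|u_{m,j}(r)|^2$ as $\int_r^1 (|u_{m,j}|^2)'ds$, multiplying by a judicious weight and applying Cauchy--Schwarz, or alternatively by invoking the minimality of the harmonic extension of $\varphi$ (which is precisely the $u$ of the first part) in the Dirichlet energy among $W^{1,2}$-extensions.

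The main technical obstacle is the necessity direction: one must justify the interchange of summation and integration in the expansion of $\|\nabla u\|^2$ (handled by first proving the identity for the partial sums $u_N:=\sum_{m\leq N}u_{m,j}(r)Y_j^{(m)}(\xi)$, which converge to $u$ in $W^{1,2}(\Bd)$ by orthogonality) and then extract the correct $\sqrt{m(m+d-2)}$ scaling in the one-dimensional trace bound, which is sharp because it is saturated precisely by the harmonic profile $u_{m,j}(r)=r^{m}$.
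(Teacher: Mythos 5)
The paper does not actually prove this proposition; it is quoted from the literature (the text points to \cite[Sec. 3.8]{atkinsonsph}), so there is no in-paper argument to compare against. Your proposal is the standard proof of that cited result and is essentially correct. For sufficiency, your norm identities check out, and in fact they simplify pleasantly: since $m^2+m(m+d-2)=m(2m+d-2)$, the gradient coefficient is exactly $m$, so $\|\nabla u\|_{L^2(\Bd)}^2=\sum_{m,j}m\,|\widehat{\varphi}_{m,j}|^2$, which is comparable to $\sum_{m,j}\sqrt{m(m+d-2)}\,|\widehat{\varphi}_{m,j}|^2$ with dimension-dependent constants; combined with the continuity of $\operatorname{tr}_{\Sd}$ on the partial sums this settles one inclusion, using that the paper defines $W^{\frac12,2}(\Sd)$ as the image of the trace map. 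For necessity, both of your routes work, but note that the ``minimality of the harmonic extension'' variant is mildly circular as stated (you must first know the harmonic extension lies in $W^{1,2}(\Bd)$); the clean fix is the one you half-indicate: project onto harmonics of degree $\le N$, use that this projection is a contraction for the Dirichlet energy by the orthogonality in Lemma \ref{Prop:sphericalprop1}, apply minimality at each finite level, and let $N\to\infty$. The direct one-dimensional trace inequality $\sqrt{m(m+d-2)}\,|f(1)|^2\le C\int_0^1 r^{d-1}\bigl(|f'|^2+\tfrac{m(m+d-2)}{r^2}|f|^2\bigr)dr$, obtained by averaging $|f(1)|^2\le|f(r)|^2+2\int_r^1|f||f'|$ over $r\in[1-\lambda^{-1},1]$ with $\lambda=\sqrt{m(m+d-2)}$, avoids this issue entirely and gives a uniform constant. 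The only genuinely unaddressed clause is the final sentence of the proposition (that the displayed expression is a complete norm), which needs the short standard completeness argument for weighted $\ell^2$ sums of Fourier coefficients; as written your proof establishes only the set-theoretic characterization.
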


	\section{An orthogonal system of solutions for the  Schr\"odinger equation}
	
	Given $\alpha>0$ and $1\leq p\menor \infty$, we denote by $L_{\alpha}\pot p(0,1)$ the $L\pot p$-space on $(0,1)$ with respect to the Borel measure $r\pot{\alpha}dr$. Note that $r^{\alpha}dr$ is a finite measure on $(0,1)$, hence we have the continuous embedding  $L^p_{\alpha}(0,1)\hookrightarrow L_{\alpha}^q(0,1)$, whereas $q\leq p$ \cite[Prop. 6. 12]{folland}. For simplicity, we denote $L_0\pot 1(0,1)=L\pot{1}(0,1)$, the standard $L\pot p$ space with the Lebesgue measure. 
	Let $V$ be a measurable complex-valued function in $L_1\pot 1(0,1)$, that is, satisfying the condition
	\begin{equation}\label{eq:conditionV}
		\int_0\pot 1r|V(r)|dr\menor \infty.
	\end{equation}
	Through the text, we assume that $V$ satisfies \eqref{eq:conditionV}.
	
	We construct a system of solutions for the radial Schr\"odinger equation with potential $V$
	\begin{equation}\label{eq:radialSchr}
		-\Delta u+V(r)u=0,\quad \text{ in  }\;\; \Bd.
	\end{equation}
	
	\begin{definition}
		A function $u\in L\pot 1_{loc}(\Bd)$ is called a {\bf distributional} solution of Eq. \eqref{eq:radialSchr} if
		\begin{equation}\label{eq:distrsol}
			\int_{\Bd}u(-\Delta \varphi+V\varphi)=0\quad \forall \varphi\in \mathscr{D}(\Bd).
		\end{equation}
		A function $u\in W\pot{1,2}(\Bd)$ is a {\bf weak} solution of Eq. \eqref{eq:radialSchr} if
		\begin{equation}\label{eq:weaksol}
			\int_{\Bd}(\nabla u\cdot\nabla \varphi+Vu\varphi)=0\quad \forall \varphi\in \mathscr{D}(\Bd).
		\end{equation}
		The spaces of distributional and weak solutions are denoted by $\operatorname{Sol}_V\pot{dist}(\Bd)$ and $\operatorname{Sol}_V\pot w(\Bd)$, respectively.
	\end{definition}
	Note that in both definitions it is required that $\int_{\Bd}Vu\varphi$  be defined. This condition is fulfilled when $u$ or $V$ belongs to $L_{loc}\pot{\infty}(\Bd)$, for the case of distributional solutions, or when $V\in L_{loc}^2(\Bd)$, for weak solutions.
	
	\begin{proposition}\label{Prop:closedsubspace}
		If $V\in L_{d-1}^2(0,1)$, then the subspace $\operatorname{Sol}_V\pot w(\Bd)$ is closed in $W\pot{1,2}(\Bd)$. Furthermore,  if $V\in L_{d-1}^r(0,1)$, with $r=\max\{2,\frac{d}{2}\}$, then $u\in \operatorname{Sol}_V\pot{w}(\Bd)$ iff
		\begin{equation}\label{eq:weaksol2ndform}
			\int_{B}(\nabla u\cdot \nabla v+Vuv)=0\quad \forall v\in W_0\pot{1,2}(\Bd).
		\end{equation}
	\end{proposition}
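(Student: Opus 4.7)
The plan is to transfer the radial integrability hypothesis on $V$ to an integrability condition on the ball via the identity
\[
\int_{\Bd}|V(|x|)|^q\,dx=\omega_{d-1}\int_0^1 r^{d-1}|V(r)|^q\,dr,
\]
which shows that $V\in L^q_{d-1}(0,1)$ is equivalent to the radial function $V$ belonging to $L^q(\Bd)$. With this translation in hand, both assertions reduce to standard Sobolev/Hölder bookkeeping.

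For the first assertion, I would show that for each fixed $\varphi\in\mathscr{D}(\Bd)$ the linear functional
\[
F_\varphi(u):=\int_{\Bd}\bigl(\nabla u\cdot\nabla\varphi+Vu\varphi\bigr)
\]
is continuous on $W^{1,2}(\Bd)$. The gradient piece is continuous by Cauchy--Schwarz because $\nabla\varphi\in L^\infty(\Bd)$. For the potential piece, the hypothesis $V\in L^2_{d-1}(0,1)$ gives $V\in L^2(\Bd)$, and since $\varphi$ is bounded one has $V\varphi\in L^2(\Bd)$, so $u\mapsto\int Vu\varphi$ is continuous on $L^2(\Bd)$ and a fortiori on $W^{1,2}(\Bd)$. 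Then $\operatorname{Sol}_V^w(\Bd)=\bigcap_{\varphi\in\mathscr{D}(\Bd)}\ker F_\varphi$ is closed as an intersection of closed hyperplanes.

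For the second assertion, the implication \eqref{eq:weaksol2ndform}$\Rightarrow$\eqref{eq:weaksol} is immediate since $\mathscr{D}(\Bd)\subset W_0^{1,2}(\Bd)$. For the converse, given $v\in W_0^{1,2}(\Bd)$ I would select $\varphi_n\in\mathscr{D}(\Bd)$ with $\varphi_n\to v$ in $W^{1,2}$ and pass to the limit in \eqref{eq:weaksol}. The gradient term converges by Cauchy--Schwarz. For the potential term I would combine Hölder's inequality with Sobolev embedding. For $d\geq 3$ one has $W^{1,2}(\Bd)\hookrightarrow L^{2^*}(\Bd)$ with $2^*=\tfrac{2d}{d-2}$, and the exponents $\tfrac{d}{2},\,2^*,\,2^*$ add to $1$, yielding
\[
\Bigl|\int_{\Bd}Vu(\varphi_n-v)\Bigr|\leq\|V\|_{L^{d/2}(\Bd)}\,\|u\|_{L^{2^*}(\Bd)}\,\|\varphi_n-v\|_{L^{2^*}(\Bd)}\longrightarrow 0,
\]
provided $V\in L^{d/2}(\Bd)$. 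For $d\geq 4$ this is exactly what the hypothesis with $r=d/2$ gives, while for $d=3$ the stronger hypothesis $V\in L^2_2(0,1)$ embeds into $L^{3/2}_2(0,1)$ because $r^2\,dr$ is a finite measure. For $d=2$ I would use the embedding $W^{1,2}(\Bd)\hookrightarrow L^p(\Bd)$ for every $p<\infty$ and apply Hölder with exponents $(2,4,4)$ against $V\in L^2(\Bd)$, which again closes the argument.

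The only delicate point is bookkeeping the dimension-dependent Sobolev exponent and checking that the prescription $r=\max\{2,d/2\}$ is precisely what is needed to make Hölder's inequality close; otherwise the argument is a routine density plus continuity statement.
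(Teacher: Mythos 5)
Your proposal is correct and follows essentially the same route as the paper: the first part is the paper's sequential closedness argument recast as continuity of the functionals $F_\varphi$ (whose kernels intersect to give $\operatorname{Sol}_V^w(\Bd)$), and the second part uses the same density-plus-H\"older/Sobolev estimate, including the identical dimension splitting with exponents $\left(\tfrac{d}{2},2^*,2^*\right)$ for $d\geq 3$ and $(2,4,4)$ for $d=2$, and the same observation that $L^2_{d-1}(0,1)\hookrightarrow L^{d/2}_{d-1}(0,1)$ handles $d=3$. No gaps.
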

	\begin{proof}
		The condition on $V$ implies that $\int_{\Bd} |V|^2=\omega_{d-1}\int_0^1r^{d-1}|V(r)|^2<\infty$. Consider the bilinear form
		\[
		B[u,\varphi]:= \int_{\Bd}(\nabla u\cdot\nabla \varphi+Vu\varphi),\quad u\in W\pot{1,2}(\Bd), \; \varphi\in \mathscr{D}(\Bd).
		\]
		Note that $\int_{\Bd}|Vu\varphi|\leq \omega_{d-1}\|V\|_{L_1\pot 2(0,1)}\|u\|_{L\pot 2(\Bd)}\|\varphi\|_{L\pot{\infty}(\Bd)}$, so $B$ is well defined for all $u\in W\pot{1,2}(\Bd) \text{ and } \varphi\in \mathscr{D}(\Bd)$. Suppose that $\{u_n\}\subset \operatorname{Sol}_V\pot w(\Bd)$ converges to $u\in W\pot{1,2}(\Bd)$. Hence, given $\varphi\in \mathscr{D}(\Bd)$, we have that  $B[u_n,\varphi]=0$ for all $n\in \mathbb{N}$ and 
		\begin{align*}
			|B[u,\varphi]|=|B[u-u_n,\varphi]|\leq \left(\sqrt{\operatorname{Vol}(\Bd)}+\omega_{d-1}\|V\|_{L_1\pot 2(0,1)}\right)\|u-u_n\|_{W\pot {1,2}(\Bd)}\|\varphi\|_{W\pot{1,\infty}(\Bd)}\rightarrow 0,
		\end{align*}
		as $n\rightarrow \infty$. Therefore, $B[u,\varphi]=0$ for all $\varphi\in \mathscr{D}(\Bd)$ and $u\in \operatorname{Sol}_V\pot w(\Bd)$. 
		
		Now, suppose that $V\in L_{d-1}^r(0,1)$ with $r=\max\{2,\frac{d}{2}\}$ (which implies that $V\in L^r(\Bd)$). First, suppose that $d\geq 3$. When $u,v\in W\pot{1,2}(\Bd)$, by the Sobolev embedding theorems \cite[Cor. 9.14]{brezis}, $u,v\in  L^{2^*}(\Bd)$, where $2^*=\frac{2d}{d-2}$. Applying the generalized H\"older inequality with $1=\frac{2}{d}+\frac{2}{2^*}$ we get 
		\begin{align*}
			|B[u,v]| & \leq \int_{\Bd}|\nabla u\cdot \nabla v|+\int_{\Bd}|Vuv|\\
			& \leq \|u\|_{W^{1,2}(\Bd)}\|v\|_{W^{1,2}(\Bd)}+\|V\|_{L^{\frac{d}{2}}(\Bd)}\|u\|_{L^{2^*}(\Bd)}\|u\|_{L^{2^*}(\Bd)}\\
			& \leq (1+\tilde{C}^2\|V\|_{L^{\frac{d}{2}}(\Bd)})\|u\|_{W^{1,2}(\Bd)}\|v\|_{W^{1,2}(\Bd)},
		\end{align*}
		where $\tilde{C}$ is the norm of the embedding $W^{1,2}(\Bd)\hookrightarrow L^{2^*}(\Bd)$. Hence the bilinear form $B: W\pot{1,2}(\Bd)\times W\pot{1,2}(\Bd)\rightarrow \mathbb{C}$ is bounded. Since $\mathscr{D}(\Bd)$ is dense in $W_0\pot{1,2}(\Bd)$, the continuity of $B$ implies \eqref{eq:weaksol2ndform}. The proof for $d=2$ is essentially the same, but using the fact that $u,v\in L^q(\mathbb{B}^2)$ for $1\leq q<\infty$ \cite[Cor. 9. 14]{brezis}, and the generalized H\"older inequality with $V\in L^2(\mathbb{B}^2)$ and $u,v\in L^4(\mathbb{B}^2)$.
	\end{proof}
	
	In particular, the first hypothesis holds when $V\in L_1^2(0,1)$, because $\int_0\pot1 r\pot{d-1}|V(r)|\pot 2dr\leq \int_0\pot 1r|V(r)|\pot 2\menor \infty$ ( $r\pot{d-2}\leq 1$, since $d\geq 2$).

	Let $p\in \mathcal{H}_m(\Bd)$. Following \cite{mineradial1}, we seek a solution $U_m$ to Eq. \eqref{eq:radialSchr} of the form 
	\begin{equation}\label{eq:radialpower1}
		U_m(r\xi)=r\pot m\alpha_m(r)p(\xi),\quad 0<r<1, \xi\in \Sd.
	\end{equation}
	for some radial function $\alpha_m(r)$. Substituting this ansatz into Eq. \eqref{eq:radialSchr} and using the decomposition of the Laplacian \eqref{eq:sphelaplace} along with the spherical eigenvalue identity \eqref{eq:SphericalEigen}, we obtain:
	
	\begin{align*}
		0= &\left[ r^{m}\left( \alpha_m''-V(r)\alpha_m\right)
		+r^{m-1}(2m+d-1)\alpha_m'\right]p \\
		& +\left[ r^{m-2}\alpha_m\left( m(m-1)+m-m(m+d-2)+m(d-1)\right) \right]p\\
		= & \left( \alpha_m''-V(r)\alpha_m+\frac{2m+d-1}{r}\alpha_m'\right) r^mp.
	\end{align*}
	Hence $U_m$  satisfies Eq. \eqref{eq:radialSchr} if $\alpha_m(r)$ is a solution of 
	\[
	\alpha_m''-V(r)\alpha_m+\frac{2m+d-1}{r}\alpha_m'=0,\quad 0<r<1.
	\]
	Set $\ell_m:= m+\frac{d-3}{2}$. Writing $\alpha_m(r):= \frac{y_m(r)}{r^{\ell_m+1}}$, the last equality is reduced to the {\it perturbed Bessel equation}
	\begin{equation}\label{eq:Besselperturbed}
		-y_m''+\frac{\ell _{m}(\ell
			_{m}+1)}{r^{2}}y_{m}+V(r)y_{m}=0,\quad 0<r<1.
	\end{equation}
	In order to obtain a solution $U_m\in L_{loc}^1(\Bd)$, we establish condition $\alpha_m(0)=1$. To obtain this, it suffices that the solution $y_m$ satisfies the asymptotic conditions 
	\begin{equation}\label{eq:asymptym}
		y_{m}(r)\sim r^{\ell _{m}+1}=r^{m+\frac{d-1}{2}},\;y_{m}^{\prime }(r)\sim
		(\ell _{m}+1)r^{\ell _{m}},\;r\rightarrow 0^{+}.
	\end{equation}
	These conditions characterize the so-called regular solution of the perturbed Bessel equation (see \cite{kostenko, serier}). Note that the critical case arises when $m=0$, $d=2$, because the asymptotic takes the form $y_0(r)\sim \sqrt{r}$, $y'_0(r)\sim \frac{1}{2\sqrt{r}}$, $r\rightarrow 0^+$.  
	\newline
	
	In what follows, {\it we assume that $d\geq 3$} (the case $d=2$ will be treated separately in Section \ref{Sec:d=2}).
	\newline

	The construction of the solutions $\{y_m\}_{m=0}^{\infty}$ was proposed in \cite{mineradial1} for the case $V\in C^1[0,1]$, and is based on the {\it spectral parameter power series} \cite{spps}. We now extend this construction to the more  general case when $\ell\mayor 0$, i.e., to the construction of the solution $w_{\ell}$ of the perturbed Bessel equation
	\begin{equation}\label{eq:perturbedbesselgral}
		-w''_{\ell}+\frac{\ell(\ell+1)}{r\pot 2}w_{\ell}+V(r)w_{\ell}=0,\quad 0\menor r\menor 1,
	\end{equation}
	satisfying the asymptotics
	\begin{equation}\label{eq:asymptgeneral}
		w_{\ell}(r)\sim r\pot{\ell+1},\quad w_{\ell}'(r)\sim (\ell+1)r\pot{\ell},\quad r\rightarrow 0\pot +.
	\end{equation}
	In particular, for each $m\in \mathbb{N}_0$ we have $y_m=w_{\ell_m}$ with $\ell_m=m+\frac{d-3}{2}$. The solution $w_{\ell}$ is constructed as a functional series of the form
	
	\begin{equation}\label{eq:sppsseries}
		w_{\ell}(r)=\sum_{k=0}^{\infty}\psi_k^{\ell}(r),
	\end{equation}
	where
	\begin{equation}\label{formalpowersbessel}
		\psi _{k}^{\ell}(r):=
		\begin{cases}
			r^{\ell+1}, & \mbox{ for }\;k=0, \\ 
			\displaystyle\int_{0}^{r}\mathcal{L}_{\ell}(r,s)V(s)\psi _{k-1}^{\ell}(s)ds, & 
			\mbox{ for }\;k\geq 1,
		\end{cases}
	\end{equation}
	and $\mathcal{L}_{\ell}(r,s)$ is the kernel
	\begin{equation}\label{greenfunction1}
		\mathcal{L}_{\ell}(r,s):=\frac{1}{2\ell +1}\left( \frac{r^{\ell+1}}{%
			s^{\ell}}-\frac{s^{\ell+1}}{r^{\ell}}\right) \quad 
		\mbox{ for
		}\;(r,s)\in (0,1]\times (0,1].  
	\end{equation}
	
	As shown in \cite[Sec. 6]{mineradial1}, the kernel $\mathcal{L}_{\ell}(r,s)$ satisfies the following properties: $\mathcal{L}_{\ell}\in C^2((0,1]\times (0,1])$; $\frac{\partial \mathcal{L}_{\ell}(r,s)}{\partial r}\big{|}_{r=s}=1$; $\frac{\partial^{2}}{\partial r^{2}}\mathcal{L}_{\ell}(r,s)=\frac{\ell
		(\ell+1)}{r^{2}}\mathcal{L}_{\ell}(r,s)$; and the estimates 
	\begin{equation}\label{eq:estimatesLm}
		|\mathcal{L}_{\ell}(r,s)|\leqslant\frac{2}{2\ell+1}\frac {
			r^{\ell+1}}{s^{\ell}}\quad\text{and }\;\;  \left| \frac{\partial}{\partial r}\mathcal{L}_{\ell}(r,s)\right|
		\leqslant\left( \frac{r}{s}\right) ^{\ell}\quad \text{for  }\; 0<s\leq r\leq 1.
	\end{equation} 
	Consequently, the functions $\psi_k^{\ell}$ satisfy the recursive relations
	\begin{equation}\label{eq:recursivepsi}
		(\psi_k^{\ell})'(r)=\int_0^r\frac{\partial \mathcal{L}_{\ell}(r,s)}{\partial s}V(s)\psi_{k-1}^{\ell}(s)ds,\quad (\psi_k^{\ell})''(r)=-V(r)\psi_{k-1}^{\ell}+\frac{\ell(\ell+1)}{r^2}\psi_k^{\ell}.
	\end{equation}
	\begin{proposition}\label{Prop:Solbessel}
		The series \eqref{eq:sppsseries}, as well as the series of their first derivative, converge absolutely and uniformly on $[0,1]$. In particular,  $w_{\ell}\in C^1[0,1]$. Furthermore,  $w_{\ell}'\in AC[\delta,1]$ for all $0<\delta<1$. The function $w_{\ell}$ satisfies the perturbed Bessel equation \eqref{eq:perturbedbesselgral} a.e. on $(0,1)$, and the asymptotic conditions \eqref{eq:asymptgeneral}.
	\end{proposition}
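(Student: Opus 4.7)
The approach is to build exponential-in-$Q(r)$ majorants for $\psi_k^{\ell}$ and $(\psi_k^{\ell})'$ via induction on $k$, where $Q(r):=\int_0^r s|V(s)|\,ds$. These bounds immediately produce $C^1([0,1])$ convergence of the series and the asymptotics at $r=0^+$. The a.e.\ differential equation and the absolute continuity of $w_{\ell}'$ on $[\delta,1]$ will then come from a second-order recursion for $(\psi_k^{\ell})''$ together with an $L^1([\delta,1])$-majorization of the resulting series.

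Set $C:=\frac{2}{2\ell+1}$ and note $Q\in C([0,1])$ is nondecreasing with $Q(0)=0$. By induction on $k$, using the bound $|\mathcal{L}_{\ell}(r,s)|\leq C r^{\ell+1}s^{-\ell}$ from \eqref{eq:estimatesLm} and the identity $s|V(s)|\,ds=dQ(s)$, which collapses the inner integral via $\int_0^r Q(s)^{k-1}dQ(s)=Q(r)^k/k$, one obtains
\[
|\psi_k^{\ell}(r)|\leq r^{\ell+1}\frac{(CQ(r))^k}{k!},\qquad r\in [0,1],\ k\in\mathbb{N}_0.
\]
A parallel induction based on $|\partial_r\mathcal{L}_{\ell}(r,s)|\leq (r/s)^{\ell}$ yields $|(\psi_k^{\ell})'(r)|\leq r^{\ell}C^{k-1}Q(r)^k/k!$ for $k\geq 1$. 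Comparison with the exponential series gives uniform absolute convergence of $\sum_k \psi_k^{\ell}$ and $\sum_k (\psi_k^{\ell})'$ on $[0,1]$, so the standard termwise differentiation theorem produces $w_{\ell}\in C^1([0,1])$ with $w_{\ell}'=\sum_k (\psi_k^{\ell})'$. Since $Q(r)\to 0$ as $r\to 0^+$, the same bounds furnish $w_{\ell}(r)=r^{\ell+1}+o(r^{\ell+1})$ and $w_{\ell}'(r)=(\ell+1)r^{\ell}+o(r^{\ell})$, which are exactly \eqref{eq:asymptgeneral}.

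For the second-order statements, I would rewrite
\[
\psi_k^{\ell}(r)=\frac{1}{2\ell+1}\bigl[r^{\ell+1}F_{k-1}(r)-r^{-\ell}G_{k-1}(r)\bigr],
\]
with $F_{k-1}(r)=\int_0^r s^{-\ell}V(s)\psi_{k-1}^{\ell}(s)\,ds$ and $G_{k-1}(r)=\int_0^r s^{\ell+1}V(s)\psi_{k-1}^{\ell}(s)\,ds$. The induction bounds show both integrands are in $L^1(0,1)$, so $F_{k-1},G_{k-1}\in AC([0,1])$. Differentiating this decomposition twice by the product rule (the two boundary contributions in the first derivative cancel, leaving an expression that is absolutely continuous on $[\delta,1]$) gives, for a.e.\ $r\in(0,1)$ and each $k\geq 1$,
\[
(\psi_k^{\ell})''(r)=\frac{\ell(\ell+1)}{r^2}\psi_k^{\ell}(r)+V(r)\psi_{k-1}^{\ell}(r),
\]
together with $(\psi_0^{\ell})''(r)=\ell(\ell+1)r^{\ell-1}$. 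The previous majorant bounds this series on $[\delta,1]$ by $(\tfrac{\ell(\ell+1)}{\delta^2}+|V(r)|)\,e^{CQ(1)}$, which is in $L^1([\delta,1])$ because $\int_{\delta}^1 |V|\leq \delta^{-1}Q(1)$. Therefore $\sum_k(\psi_k^{\ell})''$ converges in $L^1([\delta,1])$ to $\frac{\ell(\ell+1)}{r^2}w_{\ell}(r)+V(r)w_{\ell}(r)$; integrating from $\delta$ to $r$ and interchanging sum with integral upgrades $w_{\ell}'$ to an element of $AC([\delta,1])$ satisfying \eqref{eq:perturbedbesselgral} a.e.

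The main obstacle is exactly this last step: because $V$ is merely in $L_1^1(0,1)$ and the kernel has a $r^{-\ell}$ singularity, termwise differentiation cannot be justified by a dominated-pointwise-derivative argument alone, and the splitting into $F_{k-1},G_{k-1}$ is the bookkeeping device that reduces the second derivative to the product rule applied to two explicit absolutely continuous integrals. Everything else reduces to a Gronwall-style estimate once $Q$ is identified as the correct Lyapunov functional.
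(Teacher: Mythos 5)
Your proof is correct and follows essentially the same route as the paper: the same inductive majorants $|\psi_k^{\ell}(r)|\le r^{\ell+1}(CQ(r))^k/k!$ and $|(\psi_k^{\ell})'(r)|\le r^{\ell}C^{k-1}Q(r)^k/k!$, the Weierstrass test, convergence of the second-derivative series in $L^1(\delta,1)$, and the same estimate for the asymptotics; your $F_{k-1},G_{k-1}$ decomposition simply makes explicit the termwise differentiation that the paper asserts via \eqref{eq:recursivepsi} and its citation of \cite[Th. 43]{mineradial1}. One remark: your recursion $(\psi_k^{\ell})''=\frac{\ell(\ell+1)}{r^2}\psi_k^{\ell}+V\psi_{k-1}^{\ell}$ carries the correct sign, whereas the paper's displayed \eqref{eq:recursivepsi} has a sign error in the $V\psi_{k-1}^{\ell}$ term, since only your version sums to the perturbed Bessel equation \eqref{eq:perturbedbesselgral}.
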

	\begin{proof}
		The proof is essentially the same as that of \cite[Th. 43]{mineradial1}, where the hypotheses $V\in C^1[0,1]$ was not used, but only the fact that $V\in L_1^1(0,1)$. We include the main details for the reader's convenience. 
		
		Using \eqref{eq:estimatesLm}, an induction argument yields the estimates
		\begin{align}
			|\psi_k^{\ell}(r)| & \leq \left(\frac{2}{2\ell+1}\right)^k\frac{r^{\ell+1}}{k!}\left(\int_0^rs|V(s)|ds\right)^k \quad \forall k\in \mathbb{N}_0,\\ \label{eq:bounpsik}
			|(\psi_k^{\ell})'(r)| &\leq \left(\frac{2}{2\ell+1}\right)^{k-1}\frac{r^{\ell}}{k!}\left(\int_0^rs|V(s)|ds\right)^k \quad \forall k\in \mathbb{N}.\\ \label{eq:bounpsikprime}
		\end{align}
		Thus,
		\begin{align}
			\sum_{k=0}^{\infty}|\psi_k^{\ell}(r)|&\leq r^{\ell+1}e^{\frac{2}{2\ell+1}\|V\|_{L_1^1(0,1)}},\label{eq:estimateseries1}\\
			\sum_{k=1}^{\infty}|(\psi_k^{\ell})'(r)|&\leq \left(\ell+\frac{1}{2}\right)r^{\ell}\left[e^{\frac{2}{2\ell+1}\|V\|_{L_1^1(0,1)}}-1 \right].\label{eq:estimateseries2}
		\end{align}
		The Weierstrass M-test ensures the absolute and uniform convergence of the series on $[0,1]$. From \eqref{eq:recursivepsi}, $\psi_k^{\ell}\in C^1[0,1]$ and hence $w_{\ell}\in C^1[0,1]$. Again, by \eqref{eq:recursivepsi}, $(\psi_k^{\ell})''\in L_1^1(\delta,1)$ for all $0<\delta<1$, and due to the estimates \eqref{eq:bounpsik} and \eqref{eq:bounpsikprime}, the series $\sum_{k=0}^{\infty}(\psi_k^{\ell})''$ converges in $L_1^1(\delta,1)$, and is not difficult to see that $w_{\ell}\in AC[\delta,1]$ for all $0<\delta<1$. The recursive relations $\eqref{eq:recursivepsi}$ imply that $w_{\ell}$ satisfies \eqref{eq:perturbedbesselgral} a.e. in $(0,1)$. Finally, in order to establish the asymptotics \eqref{eq:asymptgeneral}, we observe that 
		\begin{equation*}
			|w_{\ell}(r)-r^{\ell +1}|\leqslant \sum_{k=1}^{\infty }\left( \frac{2}{%
				2\ell+1}\right) \frac{r^{\ell +1}}{k!}\left(
			\int_{0}^{r}s|V(s)|ds\right) ^{k}=r^{\ell+1}\left\{ e^{ \frac{2%
				}{2\ell +1}\int_{0}^{r}s|V(s)|ds} -1\right\} .
		\end{equation*}%
		Thus, $\displaystyle\left\vert \frac{w_{\ell}(r)}{r^{\ell +1}}-1\right\vert
		\leqslant \left\{ e^{ \frac{2}{2\ell+1}\int_{0}^{r}s|V(s)|ds} -1\right\} $. The right-hand side tends to zero when $r\rightarrow
		0^{+}$. Therefore, $\lim_{r\rightarrow 0^{+}}\frac{w_{\ell}(r)}{r^{\ell+1}}=1$. The proof of the second asymptotic is similar.
	\end{proof}

	The following estimates for the functions $\alpha_m$ will be needed.
	
	\begin{lemma}\label{Lemma:boundsalpha}
		There exists constants $C_j>0$, $j=0,1,2$, such that the following estimates hold for the functions $\{\alpha_m\}_{m=0}\pot{\infty}$:
		\begin{itemize} 
			\item[(i)] $|\alpha_m(r)|\leq C_0$ for all $r\in [0,1]$, $m\in \mathbb{N}_0$.
			\item[(ii)] $|\alpha_m'(r)|\leq \frac{mC_1}{r}$ for all $r\in (0,1]$, $m\in \mathbb{N}$.
			\item[(iii)] $|\alpha_0'(r)|\leq \frac{C_2}{r}$ for all $r\in (0,1]$.
		\end{itemize}
	\end{lemma}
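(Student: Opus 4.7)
The plan is to derive all three estimates directly from the SPPS representation $w_{\ell_m}(r)=\sum_{k=0}^{\infty}\psi_k^{\ell_m}(r)$ via the identity $\alpha_m(r)=w_{\ell_m}(r)/r^{\ell_m+1}$. The crucial structural observation is that the majorants \eqref{eq:estimateseries1}--\eqref{eq:estimateseries2} depend on the index $m$ only through the factor $1/(2\ell_m+1)$, which is bounded by $1$ whenever $d\geq 3$, since $\ell_m=m+(d-3)/2\geq 0$.

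For (i), I would divide \eqref{eq:estimateseries1} by $r^{\ell_m+1}$ to obtain, for $r\in(0,1]$ and $m\in\mathbb{N}_0$,
\[
|\alpha_m(r)|\leq e^{\frac{2}{2\ell_m+1}\|V\|_{L_1^1(0,1)}}\leq e^{2\|V\|_{L_1^1(0,1)}},
\]
and extend to $r=0$ by continuity, since $\alpha_m(0)=1$. This yields the constant $C_0=e^{2\|V\|_{L_1^1(0,1)}}$.

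For (ii) and (iii), differentiating the identity $\alpha_m(r)=w_{\ell_m}(r)/r^{\ell_m+1}$ gives
\[
\alpha_m'(r)=\frac{w_{\ell_m}'(r)}{r^{\ell_m+1}}-(\ell_m+1)\frac{w_{\ell_m}(r)}{r^{\ell_m+2}}.
\]
I would bound the second summand using \eqref{eq:estimateseries1}, and the first by combining the explicit derivative $(\psi_0^{\ell_m})'(r)=(\ell_m+1)r^{\ell_m}$ with \eqref{eq:estimateseries2}. Both resulting majorants share the factor $r^{\ell_m}$, so after dividing by $r^{\ell_m+1}$ we arrive at a bound of the form
\[
|\alpha_m'(r)|\leq \frac{1}{r}\Bigl\{(\ell_m+1)\bigl(1+e^{\frac{2\|V\|_{L_1^1}}{2\ell_m+1}}\bigr)+\bigl(\ell_m+\tfrac{1}{2}\bigr)\bigl[e^{\frac{2\|V\|_{L_1^1}}{2\ell_m+1}}-1\bigr]\Bigr\}.
\]
Since $\ell_m+1=m+(d-1)/2$ satisfies $\ell_m+1\leq\tfrac{d+1}{2}m$ for $m\geq 1$ and equals $(d-1)/2$ for $m=0$, the first group grows at most linearly in $m$ (respectively is a constant), which yields the factor $m$ in (ii) and the pure constant in (iii).

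The subtle point, which I anticipate as the main obstacle, is controlling the middle term: the two factors behave oppositely ($\ell_m\to\infty$ while the exponent tends to $0$), yet the product must remain bounded. This is handled by the elementary inequality $e^x-1\leq xe^x$ applied with $x=2\|V\|_{L_1^1}/(2\ell_m+1)$, which yields
\[
\bigl(\ell_m+\tfrac{1}{2}\bigr)\bigl[e^{\frac{2\|V\|_{L_1^1}}{2\ell_m+1}}-1\bigr]\leq \|V\|_{L_1^1}\,e^{2\|V\|_{L_1^1}},
\]
uniformly in $\ell_m\geq 0$. Collecting the resulting absolute constants produces explicit values for $C_1$ and $C_2$ and completes the proof.
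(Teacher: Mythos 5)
Your proposal is correct and follows essentially the same route as the paper: both arguments divide the series majorants \eqref{eq:estimateseries1}--\eqref{eq:estimateseries2} by $r^{\ell_m+1}$, differentiate the quotient $\alpha_m=w_{\ell_m}/r^{\ell_m+1}$, and use the linear growth of $\ell_m+1$ in $m$ to extract the factor $m$ in (ii). Your use of $e^x-1\leq xe^x$ to bound $\bigl(\ell_m+\tfrac{1}{2}\bigr)\bigl[e^{2\|V\|_{L_1^1}/(2\ell_m+1)}-1\bigr]$ uniformly is in fact slightly more careful than the paper's passing claim that $|y_m'(r)|\leq 2r^{\ell_m}(\ell_m+1)$, which only holds up to a constant depending on $\|V\|_{L_1^1}$; the conclusion is unaffected.
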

	\begin{proof}
		\begin{itemize}
			\item[(i)] From \eqref{eq:estimateseries1}, 
			\[
			|\alpha_m(r)|=\frac{1}{r^{\ell_m+1}}|y_m(r)|\leq e^{\frac{2}{2\ell_m+1}\|V\|_{L_1^1(0,1)}}.
			\]
			Since $2\ell_m+1=2m+d-2$,  it follows that $e^{\frac{2}{2\ell_m+1}\|V\|_{L_1^1(0,1)}}\leq e^{2\|V\|_{L^1(0,1)}}$ for all $m\in \mathbb{N}_0$. Then $C_0=e^{2\|V\|_{L^1(0,1)}}$ satisfies the condition in (i).
			\item[(ii)] Note that
			\[
			\alpha_m'(r)=\frac{y_m'(r)}{r^{\ell_m+1}}-(\ell_m+1)\frac{y_m(r)}{r^{\ell_m+1}}= \frac{1}{r}\left(\frac{y_m'(r)}{r^{\ell_m}}-(\ell_m+1)\alpha_m(r)\right).
			\] 
			By the estimate \eqref{eq:estimateseries2},
			\begin{align*}
				|y_m'(r)|& \leq (\ell_m+1)r^{\ell_m}+\left(\ell_m+\frac{1}{2}\right)r^{\ell_m}\left[e^{\frac{2}{2\ell_m+1}\|V\|_{L_1^1(0,1)}}-1\right] \leq 2r^{\ell_m}(\ell_m+1).
			\end{align*}
			Since $\lim\limits_{m\rightarrow\infty}\frac{\ell_m+1}{m}=1$, we can choose $M>0$ and $n_0\in \mathbb{N}$ such that $\ell_m+1\leq Mm$ for $m\geq n_0$. Taking $C_1=2\max\{\ell_1+1,\dots,\frac{\ell_{m_0-1}+1}{m_{0-1}},M\}(1+C_0)$, we obtain (ii).
			\item[(iii)] As in the previous point, $|\alpha_0'(r)|\leq \frac{1}{r}\left(\frac{y_0'(r)}{r^{\frac{d-3}{2}}}+\frac{d-1}{2}| \alpha_0(r)| \right)$, then $C_2=\frac{d-1}{2}(2+C_0)$.
		\end{itemize}
		
	\end{proof}

	\begin{theorem}\label{Th:Sol1}
		Given $p\in \mathcal{H}_m(\Bd)$, $m\in \mathbb{N}_0$, the function $U_m(x)=r^m\phi_m(r)p(\xi)$ belongs to $W^{1,2}(\Bd)\cap W^{2,1}(A_{\varepsilon}(0))$, where $A_{\varepsilon}(0)=\Bd\setminus \overline{B}^d_{\varepsilon}(0)$, and satisfies the Schr\"odinger equation a.e. in $A_{\varepsilon}(0)$ for all $0<\varepsilon<1$. Therefore, $U_m\in \operatorname{Sol}_V^w(\Bd)$.
	\end{theorem}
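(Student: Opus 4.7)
The plan is to (i) verify the two Sobolev regularity statements from the explicit form of $U_m$ together with the bounds in Lemma \ref{Lemma:boundsalpha}, (ii) read off the Schr\"odinger equation a.e.\ on $A_\varepsilon(0)$ from the ODE computation already carried out in the derivation of \eqref{eq:Besselperturbed}, and (iii) deduce the weak formulation on $\Bd$ by applying Green's identity on $A_\varepsilon(0)$ and letting $\varepsilon\to 0^+$. Since $p\in\mathcal{H}_m(\Bd)$ is a homogeneous harmonic polynomial of degree $m$, one has $p(x)=r^m p(\xi)$, so $|p(x)|\leq C r^m$ and $|\nabla p(x)|\leq C r^{m-1}$ (with the convention $\nabla p\equiv 0$ when $m=0$), and $U_m(x)=\alpha_m(r)p(x)$ gives $\nabla U_m=\alpha_m'(r)p(x)\widehat{\mathbf{r}}+\alpha_m(r)\nabla p(x)$.

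For the $W^{1,2}$ regularity, Lemma \ref{Lemma:boundsalpha} combined with the homogeneity bounds shows that $|\nabla U_m|$ is uniformly bounded on $\Bd$ for $m\geq 1$, while $|\nabla U_0(x)|\leq C/r$; since $d\geq 3$ makes $r^{-2}$ integrable against $r^{d-1}dr$ on $(0,1)$, one obtains $\nabla U_m\in L^2(\Bd)$ in every case, and trivially $U_m\in L^\infty(\Bd)$. For the $W^{2,1}$ regularity on $A_\varepsilon(0)$, Proposition \ref{Prop:Solbessel} yields $y_m\in C^1[0,1]$ and $y_m'\in AC[\varepsilon,1]$, hence $\alpha_m\in W^{2,1}(\varepsilon,1)$, which via the chain rule places each second Cartesian partial derivative of $U_m$ in $L^1(A_\varepsilon(0))$; the identity $-\Delta U_m+VU_m=0$ a.e.\ on $A_\varepsilon(0)$ then follows directly from the ODE computation in the lines preceding \eqref{eq:Besselperturbed}, together with the spherical eigenvalue identity \eqref{eq:SphericalEigen} applied to $p(\xi)$.

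To obtain the weak formulation, fix $\varphi\in\mathscr{D}(\Bd)$. The $W^{2,1}$ regularity on $A_\varepsilon(0)$ together with the pointwise equation permits Green's identity in the form
\[
\int_{A_\varepsilon(0)}\bigl(\nabla U_m\cdot\nabla\varphi+VU_m\varphi\bigr)\,dv=-\int_{S^d_\varepsilon(0)}\frac{\partial U_m}{\partial r}\,\varphi\,d\sigma,
\]
the outer boundary term being zero because $\operatorname{supp}\varphi\Subset\Bd$. Bounding $|\partial_r U_m(\varepsilon\xi)|$ through Lemma \ref{Lemma:boundsalpha}, the inner integral is $O(\varepsilon^{m+d-2})$ for $m\geq 1$ and $O(\varepsilon^{d-2})$ for $m=0$; both vanish as $\varepsilon\to 0^+$ thanks to $d\geq 3$. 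The left-hand side converges to $\int_{\Bd}(\nabla U_m\cdot\nabla\varphi+VU_m\varphi)\,dv$ by dominated convergence, justified by $\nabla U_m\in L^2(\Bd)$, $U_m\in L^\infty(\Bd)$, and $V\in L^1(\Bd)$ (using $\int_{\Bd}|V|\,dv=\omega_{d-1}\int_0^1 r^{d-1}|V(r)|\,dr<\infty$). The main technical subtlety is precisely this boundary estimate: the singular behaviour of $\alpha_m'$ near the origin must be compensated by the surface-area factor $\varepsilon^{d-1}$, and this compensation works only because $d\geq 3$, which is the reason $d=2$ is handled separately in Section \ref{Sec:d=2}.
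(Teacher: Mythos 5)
Your proposal is correct and follows essentially the same route as the paper: establish the $W^{1,2}(\Bd)$ and $W^{2,1}(A_{\varepsilon}(0))$ regularity from Lemma \ref{Lemma:boundsalpha} and Proposition \ref{Prop:Solbessel}, then apply Green's identity on the annulus and let $\varepsilon\to 0^+$, with the inner boundary term of order $\varepsilon^{m+d-2}$ vanishing because $d\geq 3$. The only cosmetic differences are that you bound $\nabla U_m$ via the homogeneity of $p(x)=r^mp(\xi)$ rather than the spherical-gradient decomposition \eqref{eq:sphericalgrad1}, and that you spell out the dominated-convergence step for the left-hand side, which the paper leaves implicit.
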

	\begin{proof}
		By Lemma \ref{Lemma:boundsalpha}(i), $U_m\in L^2(\Bd)$. To estimate the gradient, we use the decomposition \eqref{eq:sphericalgrad1} together with Lemma \ref{Lemma:boundsalpha}(ii), valid for $m\geq 1$. This yields
		\begin{align*}
			|\nabla U_m|^2 & = \left|\frac{d}{dr}(r^m\alpha_m(r))\right|^2|p|^2+r^{2m}|\alpha_m(r)|^2|\nabla_{\Sd}p|^2\\
			& \leq \left(m^2r^{2m-2}C_0^2+r^{2m}\frac{m^2C_1^2}{r^2}\right)|p|^2+r^{2m}C_0^2|\nabla_{\Sd}p|^2\\
			&= m^2r^{2m-2}(C_0^2+C_1^2)|p|^2+r^{2m}C_0^2|\nabla_{\Sd}p|^2.
		\end{align*}
		Hence 
		\begin{align*}
			\int_{\Bd}|\nabla U_m|^2\leq \int_0^1r^{d-1}m^2r^{2m-2}(C_0^2+C_1^2)dr\int_{\Sd}|p|^2d\sigma+\int_0^1r^{d-1}r^{2m}C_0^2dr\int_{\Sd}|\nabla_{\Sd}p|^2d\sigma\\
			\leq \frac{m^2(C_0^2+C_1^2)}{2m+d-2}\|p\|_{L^2(\Sd)}^2+\frac{C_0^2}{2m+d}\|\nabla_{\Sd}p\|_{L^2(\Sd)}^2.
		\end{align*}
		The left-hand side is finite since $2m+d-2\geq 1$. For the case $m=0$, the function $p$ is constant, and applying  Lemma \ref{Lemma:boundsalpha}(iii) we obtain
		\[
		\int_{\Bd}|\nabla U_0|^2\leq  \omega_{d-1}|p|^2(C_0^2+C_2^2)\int_0^1r^{d-1}r^{-2}dr=\omega_{d-1}|p|^2(C_0^2+C_2^2)\frac{1}{d-2},
		\]
		because $d-2\geq 0$. Hence $U_m\in W^{1,2}(\Bd)$. For any $0<\varepsilon<1$, $\alpha_m\in W^{2,1}(\varepsilon,1)$, hence $U_m\in W^{2,1}(A_{\varepsilon}(0))$. By Proposition \ref{Prop:Solbessel}, we conclude that $U_m$ satisfies Eq. \eqref{eq:radialSchr} a.e. in $A_{\varepsilon}(0)$. 
		
		Finally, let $\varphi\in \mathscr{D}(\Bd)$ and fix $0\menor \varepsilon\menor 1$. Using Green's identity on  $A_{\varepsilon}(0)$ we have
		\begin{align*}
			\int_{A_{\varepsilon}(0)}\left\{\nabla U_m\cdot \nabla \varphi+VU_m\varphi\right\} &= \int_{\Sd-S_{\varepsilon}^d(0)}\varphi \frac{\partial U_m}{\partial \nu}d\sigma +\int_{A_{\varepsilon}(0)}\left\{(-\Delta U_m+VU_m)\varphi\right\}\\
			&=  -\int_{S_{\varepsilon}^d(0)}\varphi (m\varepsilon^{m-1}\alpha_m(\varepsilon)+\varepsilon^m\alpha_m'(\varepsilon))d\sigma.
		\end{align*}
		For  $m\geq 1$, applying Lemma \ref{Lemma:boundsalpha}(i) and (ii) we obtain 
		\begin{align*}
			\left|\int_{S_{\varepsilon}^d(0)}\varphi (m\varepsilon^{m-1}\alpha_m(\varepsilon)+\varepsilon^m\alpha_m'(\varepsilon))d\sigma\right| & \leq m(C_1+ C_0)\|\varphi\|_{L^{\infty}(\Bd)}\omega_{d-1}\varepsilon^{m+d-2}\\
		\end{align*}
		Passing to the limit when $\varepsilon\rightarrow 0^+$, we obtain that $\int_{\Bd}\left\{\nabla U_m\cdot \nabla \varphi+VU_m\varphi\right\}=0$. Therefore, $U_m\in \operatorname{Sol}_V^w(\Bd)$. Since $d-2>0$, the same conclusion is valid for $m=0$.
	\end{proof}

	\begin{definition}
		A {\bf formal spherical polynomial} of degree $m$, is a function of the form $U_m(x)=r^m\alpha_m(r)p(\xi)$, where $p\in \mathcal{H}_m(\Sd)$. The collection of the formal spherical polynomials is denoted by $\mathcal{S}_m(\Bd)$.
	\end{definition}
	
	Given the orthonormal basis $\{Y_j^{(m)}\}_{j=1}^{d_m}$ of $\mathcal{H}_m(\Sd)$, we define the corresponding formal spherical polynomials  by 
	\begin{equation}
		\mathcal{V}_j^{(m)}(x):= r^m\alpha_m(r)Y_j^{(m)}(\xi),\quad j=1,\dots, d_m.
	\end{equation}
	It is worth mentioning that there exist explicit formulas for some orthonormal basis $\{Y_j^{(m)}\}_{j=1}^{d_m}$ in higher dimensions, see, e.g., \cite[Ch. 1]{day} and \cite[Remark 3.2]{mineradial1}.

	\begin{proposition}
		\begin{itemize}
			\item[(i)] If $m\neq n$, $\mathcal{S}_m(\Bd)\perp_{W^{1,2}(\Bd)} \mathcal{S}_n(\Bd)$.
			\item[(ii)] The set $\{\mathcal{V}_j^{(m)}\}_{j=1}^{m}$ forms an orthogonal basis for $\mathcal{S}_m(\Bd)$ with the $W^{1,2}$-norm. 
		\end{itemize}
	\end{proposition}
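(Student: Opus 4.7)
The plan is to exploit the factorization of formal spherical polynomials into a radial and a spherical factor, together with the orthogonal decomposition of the gradient from Lemma \ref{Lemma:sphericalgradient}, so that every integral over $\Bd$ splits into a radial integral on $(0,1)$ against a surface integral on $\Sd$. The angular integrals will then be killed by the orthogonality results for spherical harmonics and their spherical gradients collected in Lemma \ref{Prop:sphericalprop1}.

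For part (i), take $U_m(r\xi)=r^m\alpha_m(r)p(\xi)\in\mathcal{S}_m(\Bd)$ and $U_n(r\xi)=r^n\alpha_n(r)q(\xi)\in\mathcal{S}_n(\Bd)$ with $p\in\mathcal{H}_m(\Sd)$, $q\in\mathcal{H}_n(\Sd)$ and $m\neq n$. Using the decomposition \eqref{eq:sphericalgrad1}, I would write
\[
\nabla U_m=\bigl(mr^{m-1}\alpha_m(r)+r^m\alpha_m'(r)\bigr)p(\xi)\,\widehat{\mathbf{r}}+r^{m-1}\alpha_m(r)\nabla_{\Sd}p(\xi),
\]
and similarly for $U_n$. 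Since $\{\widehat{\mathbf{r}},\widehat{\boldsymbol{\theta}}_1,\dots,\widehat{\boldsymbol{\theta}}_{d-1}\}$ is orthonormal and $\nabla_{\Sd}p$ lies in the tangent space, the pointwise inner product $\nabla U_m\cdot\overline{\nabla U_n}$ decomposes into a term proportional to $p(\xi)\overline{q(\xi)}$ and one proportional to $\nabla_{\Sd}p(\xi)\cdot\overline{\nabla_{\Sd}q(\xi)}$. After passing to polar coordinates via the formula $\int_{\Bd}f=\int_0^1r^{d-1}\int_{\Sd}f(r\xi)\,d\sigma_\xi\,dr$, both contributions (and also the $L^2$-term $\int_{\Bd}U_m\overline{U_n}$) factor as a radial integral times one of the angular integrals $\int_{\Sd}p\overline{q}\,d\sigma$ or $\int_{\Sd}\nabla_{\Sd}p\cdot\overline{\nabla_{\Sd}q}\,d\sigma$. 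By Lemma \ref{Prop:sphericalprop1}(i) and (v), both of these vanish whenever $m\neq n$, which gives $\langle U_m,U_n\rangle_{W^{1,2}(\Bd)}=0$.

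For part (ii), the same computation applied to $\mathcal{V}_j^{(m)}$ and $\mathcal{V}_k^{(m)}$ with $j\neq k$ reduces the $W^{1,2}$ inner product to a linear combination of $\int_{\Sd}Y_j^{(m)}\overline{Y_k^{(m)}}\,d\sigma$ and $\int_{\Sd}\nabla_{\Sd}Y_j^{(m)}\cdot\overline{\nabla_{\Sd}Y_k^{(m)}}\,d\sigma$, which vanish by the orthonormality of $\{Y_j^{(m)}\}_{j=1}^{d_m}$ and by Lemma \ref{Prop:sphericalprop1}(vi). To see that $\{\mathcal{V}_j^{(m)}\}_{j=1}^{d_m}$ spans $\mathcal{S}_m(\Bd)$, any $U_m\in\mathcal{S}_m(\Bd)$ has spherical factor $p=\sum_{j=1}^{d_m}c_jY_j^{(m)}$ in the basis of $\mathcal{H}_m(\Sd)$, and by the linearity in $p$ of the map $p\mapsto r^m\alpha_m(r)p(\xi)$, one gets $U_m=\sum_j c_j\mathcal{V}_j^{(m)}$. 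Orthogonality forces linear independence, and the $\mathcal{V}_j^{(m)}$ are nonzero because $\alpha_m(0)=1$ ensures $\alpha_m\not\equiv 0$, so $\{\mathcal{V}_j^{(m)}\}_{j=1}^{d_m}$ is an orthogonal basis.

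No serious obstacle is expected; the only point requiring care is justifying that all integrals are finite so the factorization is legitimate, but this follows from $\mathcal{S}_m(\Bd)\subset W^{1,2}(\Bd)$ proved in Theorem \ref{Th:Sol1}, together with the uniform bounds on $\alpha_m,\alpha_m'$ from Lemma \ref{Lemma:boundsalpha}, which allow Fubini to be applied safely when separating radial and angular integration.
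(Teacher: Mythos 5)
Your proposal is correct and follows essentially the same route as the paper: decompose the gradient via Lemma \ref{Lemma:sphericalgradient}, factor each term of the $W^{1,2}$ inner product into a radial integral times an angular integral, and invoke Lemma \ref{Prop:sphericalprop1}(i),(v) for part (i) and (vi) for part (ii), with spanning coming from the linearity of $p\mapsto r^m\alpha_m(r)p$. Your extra remark on justifying Fubini via Theorem \ref{Th:Sol1} and Lemma \ref{Lemma:boundsalpha} is a reasonable addition but does not change the argument.
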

	\begin{proof}
		\begin{itemize}
			\item[(i)] Let $p\in \mathcal{H}_m(\Sd)$ and $q\in \mathcal{H}_n(\Sd)$, and let $U_m$ and $U_n$ denote their corresponding formal spherical polynomials. Hence
			\begin{align*}
				\langle U_m, U_n\rangle_{W^{1,2}(\Bd)} = & \int_{\Bd}U_m\overline{U_n}+\int_{\Bd} \frac{\partial U_m}{\partial r}\overline{\frac{\partial U_n}{\partial r}}+\int_{\Bd}\nabla_{\Sd}U_m\cdot \overline{\nabla_{\Sd}U_n}d\sigma \\
				=& \int_0^1r^{d-1}\left(r\pot{m+n}\alpha_m(r)\overline{\alpha_m(r)}+\frac{d(r^m\alpha_m(r))}{dr}\overline{\frac{d(r^n\alpha_n(r))}{dr}}\right)dr\int_{\Sd}p\overline{q}d\sigma\\ &+\int_0^1r^{d-1}r^{m+n}\alpha_m(r)\overline{\alpha_n(r)}dr\int_{\Sd}\nabla_{\Sd}p\cdot \overline{\nabla_{\Sd}q}d\sigma =0,
			\end{align*}
			where the integrals over $\Sd$ are cero by Lemma \ref{Prop:sphericalprop1}(i) and (v).
			\item[(ii)] Applying the same procedure of the point (i) and using Lemma \ref{Prop:sphericalprop1}(vi), the set $\{\mathcal{V}_j^{(m)}\}_{j=1}^{m}$ is orthogonal in $W^{1,2}(\Bd)$. Since the mapping $\mathcal{H}_m(\Sd)\ni p\mapsto r^m\phi_m(r)p\in \mathcal{S}_m(\Bd)$ is a linear isomorphism, we conclude that $\{\mathcal{V}_j^{(m)}\}_{j=1}^{m}$ is orthogonal in $W^{1,2}(\Bd)$ is an orthogonal basis for $\mathcal{S}_m(\Bd)$.
		\end{itemize}
	\end{proof}
	
	\section{Solution of the Dirichlet problem for Sobolev traces}
	
	In this section, we consider the Dirichlet problem for the radial equation \eqref{eq:radialSchr} in the sense of the Sobolev traces (DS): given a function $\varphi\in W^{\frac{1}{2},2}(\Sd)$, find a function $u\in \operatorname{Sol}_V^w(\Bd)$ such that $\operatorname{tr}_{\Sd}u=\varphi$. 
	
	In the case where $V\in L^{\infty}(\Bd)$ is real valued and satisfies the condition $\inf_{0\leq r\leq 1}V(r)> -\lambda_0$, where $\lambda_0=\inf\limits_{u\in W_0^{1,2}(\Bd)\setminus\{0\}}\frac{\||\nabla u|\|_{L^2(\Bd)}}{\|u\|_{L^2(\Bd)}}$ (the best constant for the Sobolev embedding $W_0^{1,2}(\Bd)\hookrightarrow L^2(\Bd)$, or equivalently, the first eigenvalue of the DS problem for the Laplace equation in $\Bd$), the DS problem admits a unique solution (indeed, it is not difficult to see that this conditions guarantees that the bilinear form $B[u,v]$ induces an inner product in $W_0^{1,2}(\Bd)$ that is equivalent to the usual one, and the result follows from the Lax-Milgram Theorem, see \cite[p. 294]{brezis}). In general, other conditions are required to ensure uniqueness of the problem (see, e.g., \cite[Ch. IV]{mclean}). In what follows, we will present a condition under which the problem admits at least one solution
	
	\begin{assumption}\label{assumption1}
		The potential $V\in L_1^1(0,1)$ satisfies the condition: 
		\[
		\alpha_m(1)\neq 0\quad \forall m\in \mathbb{N}_0.
		\]
		This is equivalent to stating that, for each $m\in \mathbb{N}$, $\lambda= 0$ is not an eigenvalue for the regular Sturm-Liouville problem 
		\begin{align*}
			-u''_m+\frac{\ell_m(\ell_m+1)}{r^2}u_m+V(r)u_m&=\lambda u_m,\quad 0<r<1\\
			u_m(0)=u_m(1)=0.&
		\end{align*}
		
	\end{assumption}

	\begin{remark}\label{Remark:asumption1}
		\begin{itemize}
			\item[(i)] Suppose that $V$ is non negative. Since $\mathcal{L}_{\ell}(r,s)\geq 0$ for $0<s<r\leq 1$, it follows that $\psi_k^{\ell}(r)\geq 0$. Hence, $y_m^{\ell}(r)=r^{\ell+1}+\sum_{k=1}^{\infty}\psi_k^{\ell}(r)\geq r^{\ell+1}$. Consequently, $y_m(1)> 1$, and  $\alpha_m(1)>0$. This conclusion holds, for instance, in the case of the {\bf Coulomb potential} $V(r):=\frac{c}{r}$ with $c>0$. 
			\item[(ii)] If $V\in L^{\infty}(\Bd)$ with $\inf_{0\leq r\leq 1}V(r)>-\lambda_0$, then $V$ satisfies Assumption \ref{assumption1}. Indeed, if $\alpha_m(1)=0$ for some $m\in \mathbb{N}_0$, then $U_m=r^m\alpha_m(r)p\in \mathcal{S}_m(\Bd)$ is a solution of the DS problem with $\varphi=0$, contradicting the uniqueness of the solution for this kind of potentials.
		\end{itemize}
		
	\end{remark}

	We recall that for $\varphi\in L^2(\Bd)$, its Fourier coefficients with respect to the orthonormal basis $\{\{Y_j^{(m)}\}_{j=1}^{d_m}\}_{m=0}^{\infty}$ are denoted by $\{\{\widehat{\varphi}_{m,j}\}_{j=1}^{d_m}\}_{m=0}^{\infty}$.
	
	\begin{lemma}\label{Lemma:boundedsequence}
		If $V$ satisfies the Assumption \ref{assumption1}, then the sequence $\left\{\frac{1}{\alpha_m(1)}\right\}_{m=0}^{\infty}$ is bounded.
	\end{lemma}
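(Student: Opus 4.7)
The plan is to show that $\alpha_m(1) \to 1$ as $m \to \infty$; combined with Assumption \ref{assumption1}, this immediately yields boundedness of the reciprocal sequence.

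I would start from the key estimate already established inside the proof of Proposition \ref{Prop:Solbessel}, namely
\[
\left|\frac{w_{\ell}(r)}{r^{\ell+1}} - 1\right| \leq e^{\frac{2}{2\ell+1}\int_0^r s|V(s)|\,ds} - 1.
\]
Specializing to $\ell = \ell_m = m + \frac{d-3}{2}$ and $r = 1$, and recalling that $\alpha_m(r) = y_m(r)/r^{\ell_m+1}$, this reads
\[
|\alpha_m(1) - 1| \leq e^{\frac{2}{2m+d-2}\|V\|_{L_1^1(0,1)}} - 1.
\]

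Since $2\ell_m + 1 = 2m + d - 2 \to \infty$ as $m \to \infty$, the exponent $\frac{2}{2m+d-2}\|V\|_{L_1^1(0,1)}$ tends to $0$, so the right-hand side tends to $0$. Therefore $\alpha_m(1) \to 1$ as $m\to\infty$. In particular, there exists $N \in \mathbb{N}$ such that $|\alpha_m(1)| \geq \tfrac12$ for all $m \geq N$, which gives $|1/\alpha_m(1)| \leq 2$ for $m \geq N$.

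For the finitely many remaining indices $m = 0, 1, \dots, N-1$, Assumption \ref{assumption1} guarantees $\alpha_m(1) \neq 0$, so $|1/\alpha_m(1)|$ is finite for each such $m$. Taking
\[
C := \max\left\{\,2,\; \max_{0\leq m\leq N-1}|1/\alpha_m(1)|\,\right\}
\]
yields $|1/\alpha_m(1)| \leq C$ for every $m \in \mathbb{N}_0$. No step here is particularly delicate: the only substantive ingredient is extracting the uniform-in-$m$ perturbation estimate from Proposition \ref{Prop:Solbessel}, which has already been proved; Assumption \ref{assumption1} is used only to handle the (finite) list of low indices.
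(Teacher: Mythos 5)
Your proof is correct and follows essentially the same route as the paper: both extract the bound $|\alpha_m(1)-1|\leq e^{\frac{2}{2\ell_m+1}\|V\|_{L_1^1(0,1)}}-1$ from the series estimates in Proposition \ref{Prop:Solbessel}, conclude $\alpha_m(1)\to 1$, and then use Assumption \ref{assumption1} to control the finitely many remaining terms. Your version merely spells out the final ``eventually bounded away from zero plus finitely many nonzero terms'' step that the paper leaves implicit.
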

	\begin{proof}
		By the estimates \eqref{eq:bounpsikprime},
		\begin{align*}
			|\alpha_m(1)-1|=|y_m(1)-1|\leq \sum_{k=1}^{\infty}|\psi_k^{\ell_m}(1)|\leq \sum_{k=1}^{\infty}\frac{\left(\frac{2}{2\ell_m+1}\right)^k\|V\|_{L_1^1(0,1)}^k}{k!}=e^{\frac{2}{2\ell_m+1}\|V\|_{L_1^1(0,1)}}-1.
		\end{align*}
		Taking the limit when $m\rightarrow \infty$, we conclude that $\lim\limits_{m\rightarrow \infty}\alpha_m(1)=1$. Since $\alpha_m(1)\neq 0$ for all $m\in \mathbb{N}_0$, it follows that $\lim\limits_{m\rightarrow \infty}\frac{1}{\alpha_m(1)}=1$, and the sequence is bounded.
	\end{proof}
	\begin{theorem}
		Suppose that $V\in L_{d-1}^2(0,1)$ satisfies  Assumption \ref{assumption1}. Given $\varphi\in W^{\frac{1}{2},2}(\Bd)$, a solution $u$ of the DS problem satisfying $\operatorname{tr}_{\Sd}u=\varphi$ is given by
		\begin{equation}\label{eq:DSsolution}
			u_{\varphi}=\sum_{m=0}^{\infty}\sum_{j=1}^{d_m}\frac{\widehat{\varphi}_{m,j}}{\alpha_m(1)}\mathcal{V}_j^{(m)}
		\end{equation}
		and the series converge in the norm of $W^{1,2}(\Bd)$.

		When $V\in L^{\infty}(\Bd)$ with $\inf_{0\leq r\leq 1}V(r)\geq -\lambda_0$, this solution is unique.
	\end{theorem}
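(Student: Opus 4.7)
The plan is to verify four points in sequence: (a) convergence of the defining series in the $W^{1,2}(\Bd)$ norm, (b) that the sum belongs to $\operatorname{Sol}_V^w(\Bd)$, (c) the trace identity $\operatorname{tr}_{\Sd}u_\varphi = \varphi$, and (d) uniqueness under the extra hypothesis on $V$. Point (a) is the crux. By the preceding proposition, the system $\{\mathcal{V}_j^{(m)}\}_{m,j}$ is mutually $W^{1,2}(\Bd)$-orthogonal, so a Cauchy-criterion test on partial sums reduces convergence to the summability
\[
\sum_{m=0}^{\infty}\sum_{j=1}^{d_m}\frac{|\widehat{\varphi}_{m,j}|^2}{|\alpha_m(1)|^2}\,\|\mathcal{V}_j^{(m)}\|_{W^{1,2}(\Bd)}^2<\infty.
\]
The estimates established inside the proof of Theorem \ref{Th:Sol1}, together with $\|Y_j^{(m)}\|_{L^2(\Sd)}=1$ and $\|\nabla_{\Sd}Y_j^{(m)}\|_{L^2(\Sd)}^2=m(m+d-2)$ from Lemma \ref{Prop:sphericalprop1}, give $\|\mathcal{V}_j^{(m)}\|_{W^{1,2}(\Bd)}^2 \leq C(m+1)$ for a constant $C$ independent of $m,j$. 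Lemma \ref{Lemma:boundedsequence} absorbs the $|\alpha_m(1)|^{-2}$ factor, so the summability reduces to $\sum_{m,j}m|\widehat{\varphi}_{m,j}|^2<\infty$, which matches (up to a multiplicative constant, using $\sqrt{m(m+d-2)}\sim m$) the Sobolev characterization \eqref{eq:sobolevtrace} of $W^{\frac{1}{2},2}(\Sd)$.

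With convergence in hand, the remaining claims are structural. For (b): each $\mathcal{V}_j^{(m)}\in\operatorname{Sol}_V^w(\Bd)$ by Theorem \ref{Th:Sol1}, so every partial sum does too; since $V\in L_{d-1}^2(0,1)$, Proposition \ref{Prop:closedsubspace} guarantees $\operatorname{Sol}_V^w(\Bd)$ is closed in $W^{1,2}(\Bd)$, and the $W^{1,2}$-limit $u_\varphi$ remains a weak solution. For (c): the restriction identity $\operatorname{tr}_{\Sd}\mathcal{V}_j^{(m)}=\alpha_m(1)Y_j^{(m)}$ together with the boundedness of the trace operator $\operatorname{tr}_{\Sd}:W^{1,2}(\Bd)\to L^2(\Sd)$ allows passing the trace inside the series; the factors $\alpha_m(1)$ and $1/\alpha_m(1)$ cancel, leaving the Fourier expansion of $\varphi$ in the orthonormal basis $\{Y_j^{(m)}\}$, which reconstructs $\varphi$ in $L^2(\Sd)$ by Lemma \ref{Prop:sphericalprop1}(iii).

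For uniqueness (d), given two candidate solutions $u_1,u_2$, the difference $w:=u_1-u_2$ lies in $W_0^{1,2}(\Bd)$ (zero trace) and, by the extended weak formulation in Proposition \ref{Prop:closedsubspace}, satisfies $B[w,v]=0$ for every $v\in W_0^{1,2}(\Bd)$. Testing with $v=\overline{w}$, using $\inf_{0\leq r\leq 1}V(r)\geq-\lambda_0$ together with the variational characterization $\lambda_0\|w\|_{L^2(\Bd)}^2\leq \|\nabla w\|_{L^2(\Bd)}^2$, and invoking the Lax--Milgram argument recalled immediately before the theorem, yields $w=0$. The main obstacle in the whole proof is the sharp matching of growth rates in step (a): the bound $\|\mathcal{V}_j^{(m)}\|_{W^{1,2}(\Bd)}^2=O(m)$ is exactly compatible with the $W^{\frac{1}{2},2}(\Sd)$ summability condition, and any loss of a factor of $m$ (say, from a clumsy handling of the radial derivative $\frac{d}{dr}(r^m\alpha_m(r))$ or of $\nabla_{\Sd}Y_j^{(m)}$) would break the convergence argument, so the estimates of Lemma \ref{Lemma:boundsalpha} must be applied with the stated weights $m^2r^{2m-2}$ rather than crude absolute bounds.
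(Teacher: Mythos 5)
Your proposal is correct and follows essentially the same route as the paper: orthogonality of the $\mathcal{V}_j^{(m)}$ plus the $O(m)$ bound on $\|\nabla\mathcal{V}_j^{(m)}\|_{L^2(\Bd)}^2$ from the proof of Theorem \ref{Th:Sol1}, matched against the $W^{\frac{1}{2},2}(\Sd)$ summability condition and the uniform bound on $1/|\alpha_m(1)|$, then closedness of $\operatorname{Sol}_V^w(\Bd)$ and continuity of the trace on partial sums. One caveat: your coercivity step for uniqueness really requires the strict inequality $\inf V>-\lambda_0$ (as in the paper's discussion preceding Assumption \ref{assumption1}); with only $\geq-\lambda_0$, testing with $\overline{w}$ yields $\|\nabla w\|_{L^2}^2=\lambda_0\|w\|_{L^2}^2$ rather than $w=0$ directly.
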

	
	\begin{proof}
		By Lemma \ref{Lemma:boundedsequence}, $C_4:= \sup\limits_{m\in \mathbb{N}_0}\frac{1}{|\phi_m(1)|}<\infty$. We now show that the series \eqref{eq:DSsolution} converges in $W^{1,2}(\Bd)$.
		
		First, note that $\|\mathcal{V}_j^{(m)}\|_{L^2(\Bd)}\leq \frac{C_0^2}{m+d}$, which implies that $$\sum_{m=0}^{\infty}\sum_{j=1}^{d_m}\left\|\frac{\widehat{\varphi}_{m,j}}{\alpha_m(1)}\mathcal{V}_j^{(m)}\right\|_{L^2(\Bd)}^2\leq C_0^2C_4^2\sum_{m=0}^{\infty}\sum_{j=1}^{d_m}|\widehat{\varphi}_{m,j}|^2<\infty,$$ and the series converges in $L^2(\Bd)$.
		
		For the gradient, from the proof of Theorem \ref{Th:Sol1} we obtain the following estimate 
		\begin{align*}
			\|\nabla\mathcal{V}_j^{(m)}\|_{L^2(\Bd)}^2&\leq \frac{m^2(C_0^2+C_1^2)}{2m+d-2}\|Y_j^{(m)}\|_{L^2(\Sd)}^2+\frac{C_0^2}{2m+d}\|\nabla_{\Sd}Y_j^{(m)}\|_{L^2(\Sd)}^2\\
			&= \frac{m^2(C_0^2+C_1^2)}{2m+d-2}+\frac{C_0^2}{2m+d}m(m+d-2).
		\end{align*}
		In the last equality, we use \eqref{eq:normgradspherical}. Since $\lim\limits_{m\rightarrow\infty}\frac{m}{2m+d-2}=1$, there exists a constant $C_5$ with $\frac{m}{2m+d}\leq \frac{m}{m+d-2}\leq C_5$. Hence  
		\begin{align*}
			\|\nabla\mathcal{V}_j^{(m)}\|_{L^2(\Bd)}^2 &\leq (C_0^2+C_1^2)C_5 m+ (m+d-2)C_5C_0^2.
		\end{align*} 
		Again, since $\lim\limits_{m\rightarrow \infty}\frac{m}{m+d-2}=1$, we can find constants $C_6, C_7>0$ such that $C_7\leq \frac{m}{m+d-2}\leq C_6$. Combining these estimates, we get
		\[
		\sum_{m=0}^{\infty}\sum_{j=1}^{d_m}\left\|\frac{\widehat{\varphi}_{m,j}}{\alpha_m(1)}\nabla \mathcal{V}_j^{(m)}\right\|_{L^2(\Bd)}^2 \leq \bar{C}\sum_{m=0}^{\infty}\sum_{j=1}^{d_m}\sqrt{m(m+d-2)}|\widehat{\varphi_{m,j}}|^2<\infty,
		\]
		for some constant $\bar{C}\mayor 0$ depending only on $C_0,\dots, C_7$. Thus, the series \eqref{eq:DSsolution} converges in $W^{1,2}(\Bd)$ and $u_{\varphi}\in W^{1,2}(\Bd)$. By Proposition \ref{Prop:closedsubspace}, we conclude that $u_{\varphi}\in \operatorname{Sol}_V^d(\Bd)$.
		
		Finally, define the partial sums $u_N=\sum_{m=0}^{N}\sum_{j=1}^{d_m}\frac{\widehat{\varphi}_{m,j}}{\alpha_m(1)}\mathcal{V}_j^{(m)}$. Since each $\mathcal{V}_j\pot{(m)}\in C(\overline{\mathbb{B}^d})$, it follows that  $\operatorname{tr}_{\Sd}u_N=\sum_{m=0}^{N}\sum_{j=1}^{d_m}\widehat{\varphi}_{m,j}Y_j^{(m)}=\varphi_N$. Consequently,  $\varphi_N\rightarrow \varphi$ in $L^2(\Sd)$. But $u_N\rightarrow u_{\varphi}$ in $W^{1,2}(\Bd)$ and the continuity of the trace operator implies that $\operatorname{tr}_{\Sd}u_N\rightarrow \operatorname{tr}_{\Sd}u$. Therefore, $\operatorname{tr}_{\Sd}u_{\varphi}=\varphi$.
	\end{proof}
	
	From the estimates obtained in the proof of Theorem \ref{Th:Sol1}, we deduce:
	
	\begin{corollary}
		The operator $W\pot{\frac{1}{2},2}(\Sd)\ni \varphi\mapsto u_{\varphi}\in \operatorname{Sol}_V\pot w(\Bd)$ is bounded.
	\end{corollary}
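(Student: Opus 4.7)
The plan is to extract linearity and the norm estimate directly from the computations already carried out in the proof of the preceding theorem. Linearity of $\varphi\mapsto u_\varphi$ is immediate from the linearity of the Fourier coefficient functionals $\varphi\mapsto \widehat{\varphi}_{m,j}$ and the fact that the partial sums in \eqref{eq:DSsolution} depend linearly on $\{\widehat{\varphi}_{m,j}\}$. So the whole task reduces to establishing an estimate of the form
\[
\|u_\varphi\|_{W^{1,2}(\Bd)}\leq C\,\|\varphi\|_{W^{\frac{1}{2},2}(\Sd)}.
\]

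First, I would isolate the two bounds already displayed inside the proof of the theorem. The $L^2$-part gives
\[
\|u_\varphi\|_{L^2(\Bd)}^2\leq C_0^2C_4^2\sum_{m=0}^\infty\sum_{j=1}^{d_m}|\widehat{\varphi}_{m,j}|^2=C_0^2C_4^2\|\varphi\|_{L^2(\Sd)}^2,
\]
by Parseval. For the gradient part, the estimate derived in the theorem reads
\[
\sum_{m=0}^\infty\sum_{j=1}^{d_m}\left\|\frac{\widehat{\varphi}_{m,j}}{\alpha_m(1)}\nabla \mathcal{V}_j^{(m)}\right\|_{L^2(\Bd)}^2\leq \bar{C}\sum_{m=0}^\infty\sum_{j=1}^{d_m}\sqrt{m(m+d-2)}\,|\widehat{\varphi}_{m,j}|^2,
\]
and since the $\mathcal{V}_j^{(m)}$ are mutually $W^{1,2}$-orthogonal and $u_\varphi$ converges in $W^{1,2}(\Bd)$, Parseval-type summation gives $\|\nabla u_\varphi\|_{L^2(\Bd)}^2$ on the left-hand side.

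Adding the two inequalities, I would set $C:=\max\{C_0^2C_4^2,\bar{C}\}$ and recognize that the right-hand side is exactly $C\,\|\varphi\|_{W^{\frac{1}{2},2}(\Sd)}^2$ by the definition of the $W^{\frac{1}{2},2}$-norm recalled earlier in the paper. This yields $\|u_\varphi\|_{W^{1,2}(\Bd)}\leq \sqrt{C}\,\|\varphi\|_{W^{\frac{1}{2},2}(\Sd)}$, which is the claimed boundedness.

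There is no real obstacle here; the only point to double-check is that each ingredient feeding $\bar{C}$, namely $C_0$, $C_1$, $C_5$, $C_6$, $C_7$ and especially $C_4=\sup_m 1/|\alpha_m(1)|$, depends only on $V$ and $d$ (the last one being finite by Lemma \ref{Lemma:boundedsequence}) and not on $\varphi$. Once this is observed, the constant $C$ is absolute relative to $\varphi$, which is exactly the content of boundedness of the linear operator $\varphi\mapsto u_\varphi$.
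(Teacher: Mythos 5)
Your proposal is correct and matches the paper's intent: the paper states the corollary without a separate proof, noting only that it follows from the estimates already derived for $\|\mathcal{V}_j^{(m)}\|_{L^2(\Bd)}$ and $\|\nabla\mathcal{V}_j^{(m)}\|_{L^2(\Bd)}$ in the proof of the Dirichlet theorem, which is exactly what you assemble (linearity, the two orthogonality-based Parseval sums, and the identification of the right-hand side with $\|\varphi\|_{W^{\frac{1}{2},2}(\Sd)}^2$). Your closing remark that all constants, in particular $C_4=\sup_m 1/|\alpha_m(1)|$ via Lemma \ref{Lemma:boundedsequence}, depend only on $V$ and $d$ is the right point to verify, and it holds.
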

	
	\begin{corollary}
		If $V\in L^{\infty}(0,1)$ and satisfies $\inf_{0\leq r\leq 1}V(r)> -\lambda_0$, then $\{\{\mathcal{V}_j^{(m)}\}_{j=1}^{d_m}\}_{m=0}^{\infty}$ is an orthogonal basis for $\operatorname{Sol}_V^w(\Bd)$.
	\end{corollary}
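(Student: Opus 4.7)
The plan is to combine the orthogonality already established in the preceding proposition with the existence-and-uniqueness theory for the Dirichlet problem, which under the stated hypotheses holds in its strongest form. Since the system $\{\mathcal{V}_j^{(m)}\}$ lives inside $\operatorname{Sol}_V^w(\Bd)$ and is mutually orthogonal in $W^{1,2}(\Bd)$, only \emph{totality} remains: every weak solution must lie in the closure of the linear span.

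To establish totality, I would take an arbitrary $u\in \operatorname{Sol}_V^w(\Bd)$ and push it onto the boundary. By the continuity of the trace operator, $\varphi:=\operatorname{tr}_{\Sd}u$ belongs to $W^{\frac{1}{2},2}(\Sd)$. Under the hypothesis $V\in L^{\infty}(0,1)$ with $\inf V>-\lambda_0$, Remark \ref{Remark:asumption1}(ii) guarantees that $V$ satisfies Assumption \ref{assumption1}, so the previous theorem produces the series
\begin{equation*}
u_{\varphi}=\sum_{m=0}^{\infty}\sum_{j=1}^{d_m}\frac{\widehat{\varphi}_{m,j}}{\alpha_m(1)}\mathcal{V}_j^{(m)},
\end{equation*}
which converges in $W^{1,2}(\Bd)$ to an element of $\operatorname{Sol}_V^w(\Bd)$ with trace $\varphi$.

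Now I would invoke the uniqueness statement for the DS problem recalled at the start of Section 4 (the condition $\inf V>-\lambda_0$ makes the bilinear form $B[u,v]$ coercive on $W_0^{1,2}(\Bd)$, and Lax--Milgram applied to $u-u_\varphi\in W_0^{1,2}(\Bd)$ yields uniqueness). Since both $u$ and $u_\varphi$ are weak solutions with the same trace, we conclude $u=u_\varphi$, so $u$ is the $W^{1,2}$-limit of the partial sums $\sum_{m\leq N}\sum_{j}\frac{\widehat{\varphi}_{m,j}}{\alpha_m(1)}\mathcal{V}_j^{(m)}$, which lie in the linear span of the system.

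I do not anticipate a genuine obstacle here: all of the technical input (the $W^{1,2}$-convergence, the orthogonality, the fulfilment of Assumption \ref{assumption1}, and the coercivity-based uniqueness) has already been assembled in the previous statements and remarks, so the argument is essentially a matter of chaining \emph{existence of the Fourier expansion} with \emph{uniqueness of the DS problem}. The only point that deserves a brief sentence is noting that the Fourier coefficients of $u$ itself, computed via the $W^{1,2}$-inner product against $\mathcal{V}_j^{(m)}$, must coincide with $\widehat{\varphi}_{m,j}/\alpha_m(1)$ up to the norm of $\mathcal{V}_j^{(m)}$, which makes the expansion canonical and justifies calling the system an orthogonal \emph{basis} rather than merely a total orthogonal set.
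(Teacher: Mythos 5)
Your proposal is correct and follows essentially the same route as the paper: the paper's proof is precisely the observation that any $u\in\operatorname{Sol}_V^w(\Bd)$ is the unique solution of the DS problem with data $\varphi=\operatorname{tr}_{\Sd}u$, hence coincides with the $W^{1,2}$-convergent series $u_\varphi$ from the preceding theorem. Your additional remarks (verifying Assumption \ref{assumption1} via Remark \ref{Remark:asumption1}(ii), the Lax--Milgram uniqueness, and the identification of the expansion coefficients) simply make explicit what the paper leaves implicit.
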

	\begin{proof}
		Given $u\in \operatorname{Sol}_V^w$, this is the unique solution to the DS problem with $\varphi=\operatorname{tr}_{\Sd}u$. Hence $u$ admits the Fourier-series \eqref{eq:DSsolution}.  
	\end{proof}
	
	Now, we establish a condition ensuring uniqueness for the DS problem.
	
	\begin{definition}
		A potential $V$ is said to be {\bf almost bounded from below}, if there exists a constant $C>0$ such that $\operatorname{Re}V(r)\geq -C$ a.e. in $(0,1)$.
	\end{definition}
	
	For example, the  Coulomb potential $V(r)=\frac{c}{r}$ with $\operatorname{Re}c>0$ is almost bounded from below, but does not belong to $L^{\infty}(0,1)$.

	\begin{theorem}
		Suppose that $V\in L_{d-1}^r(0,1)$, where $r=\max\{2,\frac{d}{2}\}$, is almost bounded from below. Then, the DS problem admits a unique solution iff $\operatorname{tr}_{\Sd}:\operatorname{Sol}_V^w(\Bd)\rightarrow W^{\frac{1}{2},2}(\Bd)$ is surjective.
	\end{theorem}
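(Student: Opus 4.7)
The plan is to exploit the radial symmetry of $V$ and reduce both sides of the equivalence to Assumption \ref{assumption1}, i.e. $\alpha_m(1)\neq 0$ for every $m\in\mathbb{N}_0$, thereby bypassing abstract Fredholm and unique-continuation machinery. The key observation is that the weak equation $-\Delta u+Vu=0$ decouples across spherical-harmonic modes when $V$ is radial, and each modal coefficient is forced by $W^{1,2}$-regularity to be a scalar multiple of the regular solution of the perturbed Bessel equation \eqref{eq:perturbedbesselgral}.

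The easy direction is immediate: if the DS problem admits a unique solution for every $\varphi\in W^{\frac{1}{2},2}(\Sd)$, then existence alone gives that $\operatorname{tr}_\Sd$ is surjective onto $W^{\frac{1}{2},2}(\Sd)$.

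For the converse, assume the trace is surjective. Fix $(m,j)$ and pick $u_{m,j}\in\operatorname{Sol}_V^w(\Bd)$ with $\operatorname{tr}_\Sd u_{m,j}=Y_j^{(m)}$, and write its spherical-harmonic expansion $u_{m,j}(r\xi)=\sum_{n,k}c_{n,k}(r)Y_k^{(n)}(\xi)$. Testing the weak equation against $\phi(r)\overline{Y_k^{(n)}(\xi)}$ with $\phi\in C_c^\infty(0,1)$ and using Lemma \ref{Prop:sphericalprop1}(ii) together with the decomposition in Lemma \ref{Lemma:sphericalgradient}, each $c_{n,k}$ is a distributional solution on $(0,1)$ of $-c''-\tfrac{d-1}{r}c'+\tfrac{n(n+d-2)}{r^2}c+Vc=0$, which via $c(r)=h(r)/r^{(d-1)/2}$ becomes \eqref{eq:perturbedbesselgral} with $\ell=\ell_n$. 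The condition $u_{m,j}\in W^{1,2}(\Bd)$ forces $\int_0 r^{d-1}|c_{n,k}'|^2\,dr<\infty$; this integral diverges for the second (singular) solution of \eqref{eq:perturbedbesselgral} for every $\ell\geq 0$, so $h$ must be a scalar multiple of the regular solution $w_{\ell_n}$ of Proposition \ref{Prop:Solbessel}, and hence $c_{n,k}(r)=\lambda_{n,k}r^n\alpha_n(r)$. Matching traces at $r=1$ yields $\lambda_{m,j}\alpha_m(1)=1$, so $\alpha_m(1)\neq 0$. Since $(m,j)$ was arbitrary, Assumption \ref{assumption1} holds. The explicit Fourier series \eqref{eq:DSsolution} then produces a solution for every $\varphi$, and uniqueness follows by the same modal argument applied to any $u_0\in\operatorname{Sol}_V^w(\Bd)$ with $\operatorname{tr}_\Sd u_0=0$: the coefficients must satisfy $\lambda_{n,k}\alpha_n(1)=0$, whence $\lambda_{n,k}=0$ for all $(n,k)$ and $u_0\equiv 0$.

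The main obstacle is the rigorous justification of the modal decomposition: verifying that the Fourier projection $c_{n,k}(r)=\int_{\Sd}u(r\xi)\overline{Y_k^{(n)}(\xi)}\,d\sigma_\xi$ of a weak solution of $-\Delta u+Vu=0$ is itself a distributional weak solution of the transformed Bessel equation, and that $W^{1,2}(\Bd)$-regularity rules out the singular Bessel solution uniformly in $\ell$. The radial separability of $V$ together with the orthogonality relations of Lemma \ref{Prop:sphericalprop1} handles the first, while the second reduces to an explicit $L^2$-integrability computation using the asymptotics \eqref{eq:asymptgeneral} and a reduction-of-order formula for the second linearly independent Bessel solution.
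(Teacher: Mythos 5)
Your overall strategy is sound, and the two directions you identify are the right ones, but your route is genuinely different from the paper's. The paper works abstractly with the bilinear form $B$ on $W_0^{1,2}(\Bd)$: using that $V$ is almost bounded from below and the compactness of $W_0^{1,2}(\Bd)\hookrightarrow L^2(\Bd)$, it shows the associated operator $\mathbf{A}$ is Fredholm of index $0$, identifies the finite-dimensional kernel $W$ of zero-trace solutions, and characterizes $\operatorname{tr}_{\Sd}(\operatorname{Sol}_V^w(\Bd))$ as the pre-annihilator of the span $E$ of the generalized normal derivatives $\frac{\partial\overline{w}_j}{\partial\nu}$; surjectivity is then equivalent to $E=\{0\}$, which is matched to uniqueness. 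That argument never uses radiality of $V$ and leans on cited Fredholm machinery. You instead exploit radiality to decouple the equation into spherical-harmonic modes, force each modal coefficient to be a multiple of the regular Bessel solution, and read off Assumption \ref{assumption1} from surjectivity; this buys a more constructive proof that simultaneously establishes the equivalence with Assumption \ref{assumption1} (i.e., it essentially proves Corollary \ref{Cor:basis} directly) and makes the uniqueness step elementary, whereas the paper's final equivalence between $E=\{0\}$ and triviality of $W$ is asserted rather than argued in detail.

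The one place where your proposal still owes a real argument is the exclusion of the singular solution. The asymptotics \eqref{eq:asymptgeneral} you cite concern only the regular solution $w_\ell$; the paper nowhere establishes the behaviour near $r=0$ of a second linearly independent solution under the sole hypothesis $\int_0^1 r|V(r)|\,dr<\infty$. You must derive it, e.g.\ via reduction of order $h_2=w_\ell\int_r^{r_0}w_\ell^{-2}$, obtaining $h_2\sim c\,r^{-\ell}$ and $h_2'\sim -\tfrac{\ell}{2\ell+1}r^{-\ell-1}$ for $\ell>0$, whence $c_2'\sim C r^{-(n+d-1)}$ with $C\neq 0$ and $\int_0 r^{d-1}|c_2'|^2\,dr=\infty$ for all $d\geq 3$, $n\geq 0$. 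Note the borderline case $d=3$, $n=0$ (so $\ell_0=0$): there $h_2\sim\mathrm{const}$ and the leading terms of $h_2'$ cancel, so you must pass to $c_2=h_2/r\sim r^{-1}$, $c_2'\sim -r^{-2}$ to see the divergence. With that computation supplied (and the routine Fubini/orthogonality verification that the Fourier projections of a weak solution solve the radial ODE distributionally on $(0,1)$, hence lie in the two-dimensional classical solution space since the coefficients are in $L^1_{loc}(0,1)$), your proof closes correctly. Also check that your appeal to the series \eqref{eq:DSsolution} for existence is legitimate: it is, since $L_{d-1}^{r}(0,1)\subset L_{d-1}^{2}(0,1)$ for $r\geq 2$ because $r^{d-1}dr$ is a finite measure.
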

	
	\begin{proof}
		By Proposition \ref{Prop:closedsubspace}, the bilinear form $B:W_0\pot{1,2}(\Bd)\times W_0\pot{1,2}(\Bd)\rightarrow \mathbb{C}$ given by \eqref{eq:weaksol2ndform}, is bounded. This allows us to define an operator $\mathbf{A}: W_0^{1,2}(\Bd)\rightarrow W^{-1,2}(\Bd)$, given by $\mathbf{A}u=B[u,\cdot]$. Since the embedding $W_0^{1,2}(\Bd)\hookrightarrow L^2(\Bd)$ is continuous and  $W_0^{1,2}(\Bd)$ is a dense subspace of $L^2(\Bd)$, we can identify $W^{1,2}_0(\Bd)\subset L^2(\Bd)\subset W^{-1,2}(\Bd)$. So, we have that $L^2(\Bd)$ is a pivot space for $W_0^{1,2}(\Bd)$ (see \cite[p. 44]{mclean}). Hence,  using that $V$ is almost bounded from below and the Poincaré inequality \cite[Cor. 9. 19]{brezis}, we obtain
		\begin{align*}
			\operatorname{Re}B[u,u] =\int_{\Bd}|\nabla u|^2+\int_{\Bd}\operatorname{Re}(V)|u|^2\geq \|u\|_{W^{1,2}(\Bd)}^2-C\|u\|_{L^2(\Bd)}^2.
		\end{align*}
		This shows that $B$ is coercive in $W_0^{1,2}(\Bd)$ relative to the pivot space $L^2(\Bd)$. Moreover, since the embedding $W^{1,2}_0(\Bd)\hookrightarrow L^2(\Bd)$ is compact \cite[Th. 9. 16]{brezis}, it follows from \cite[Th. 2. 34]{mclean} that the operator $\mathbf{A}: W_0^{1,2}(\Bd)\hookrightarrow W^{-1,2}(\Bd)$ is Fredholm with index $0$. In particular, the space $W$ of solutions in $\operatorname{Sol}_V^w(\Bd)$ with zero trace is finite dimensional. Suppose that $\{w_1,\dots, w_N\}$ is a basis for $W$.  According to \cite[p. 115]{mclean}, the adjoint operator of $\mathbf{A}$ is given by 
		\[
		(\mathbf{A}^{\star}u)v= \int_{\Bd} \nabla u\cdot \nabla v+\overline{V}uv,\quad u,v\in W_0^{1,2}(\Bd).
		\]
		Note that $w_j$ is a solution of trace zero of the Schr\"odinger equation with potential $V$ iff $\overline{w}_j$ is a solution of the corresponding equation with potential $\overline{V}$. By the Fredholm alternative \cite[Th. 2. 27]{mclean}, the non-homogeneous equation $\mathbf{A}u=f$, $f\in W^{-1,2}(\Bd)$, admits a solution $u\in W_0^{1,2}(\Bd)$ iff $(f|\overline{w_j})_{W^{1,2}_0(\Bd)}=0$, $j=1,\dots, N$. Now, consider the DS problem for the Schr\"odinger equation with potential $V$ with Dirichlet data $\varphi\in W^{\frac{1}{2},2}(\Bd)$. It is easy to see that the DS problem is equivalent to solve $\mathbf{A}y=f$ with $f=\mathbf{A}\mathcal{E}\varphi$, where $\mathcal{E}\varphi\in W^{1,2}(\Bd)$ is an extension of $\varphi$. Hence, the DS problem has a unique solution iff
		\begin{align}
			0=(\mathbf{A}\mathcal{E}\varphi|\overline{w}_j)_{W_0^{1,2}(\Bd)}= \int_{B}\nabla \mathcal{E}\varphi \cdot \nabla \overline{w}_j+V\mathcal{E}\varphi \overline{w}_j \label{eq:generalizednormalderivative}.
		\end{align} 
		The right-hand side of \eqref{eq:generalizednormalderivative} defines a functional in $W^{-\frac{1}{2},2}(\Sd):= \left(W^{\frac{1}{2},2}(\Sd)\right)'$, which is independent of the choice of extension $\mathcal{E}\varphi$ (see \cite[Lemma 4. 3]{mclean} and \cite[Ch. 7]{mitesis}). We call this functional the {\it generalized normal derivative of } $\overline{w}_j$, denoted by $\frac{\partial \overline{w}_j}{\partial \nu}$, $j=1,\dots, N$. In particular, if $\varphi=\operatorname{tr}_{\Sd} u$ with $u\in \operatorname{Sol}_V^w(\Bd)$, then
		\[
		\left( \frac{\partial \overline{w}_j}{\partial \nu}\bigg{|} \operatorname{tr}|_{\Sd}u\right)_{W^{\frac{1}{2},2}(\Sd)} = \int_{\Bd} \nabla u\cdot \nabla \overline{w}_j+Vu\overline{w}_j=0,\quad j=1,\dots, N,
		\]
		because $u\in \operatorname{Sol}_V^w(\Bd)$ and $\overline{w}_j\in W_0^{1,2}(\Bd)$. Conversely, if $\varphi \in W^{\frac{1}{2},2}(\Sd)$ satisfies that $\left( \frac{\partial \overline{w}_j}{\partial \nu}\bigg{|} \varphi\right)_{W^{\frac{1}{2},2}(\Sd)} =0$, then Eq.  \eqref{eq:generalizednormalderivative} implies that there exists $y\in W_0^{1,2}(\Bd)$ such that $\mathbf{A}y=\mathbf{A}\mathcal{E}\varphi$, i.e., there is $u\in \operatorname{Sol}_V^w(\Bd)$ with $\operatorname{tr}_{\Sd}u=\varphi$.  Thus, the pre-annihilator of $E=\operatorname{Span}\left\{\frac{\partial \overline{w}_j}{\partial \nu}\right\}_{j=1}^N$ in $W^{\frac{1}{2},2}(\Sd)$ is precisely $\operatorname{tr}_{\Sd}\left(\operatorname{Sol}_V^w(\Bd)\right)$. Since $E$ is finite dimensional, by \cite[Ch. 2, Lemma 1]{mitesis}, the annihilator of $\operatorname{Sol}_V^w(\Bd)$ is $E$. 
		
		Finally, $\operatorname{tr}_{\Sd}\left(\operatorname{Sol}_V^w(\Bd)\right)=W^{\frac{1}{2}}(\Sd)$ iff $E=\{0\}$ which is equivalent to the DS problem having a unique solution.
	\end{proof}
	
	\begin{corollary}\label{Cor:basis}
		If $V\in L_{d-1}^r(0,1)$ with  $r=\max\{2,\frac{d}{2}\}$, is almost bounded from below, then the DS problem admits a unique solution iff the potential $V$ satisfies Assumption \ref{assumption1}. In this case, the family $\{\{\mathcal{V}_j\pot{(m)}\}_{j=1}\pot{d_m}\}_{m=0}\pot{\infty}$ forms an orthogonal basis for $\operatorname{Sol}_V\pot{w}(\Bd)$.
	\end{corollary}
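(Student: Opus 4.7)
The plan is to deduce the corollary almost immediately from the preceding theorem (which equates uniqueness of the DS problem with surjectivity of $\operatorname{tr}_{\Sd}:\operatorname{Sol}_V^w(\Bd)\to W^{\frac{1}{2},2}(\Sd)$) by supplying both the existence side from Theorem \ref{Th:Sol1}/\eqref{eq:DSsolution} and the trace-zero obstruction from Remark \ref{Remark:asumption1}(ii). A preliminary observation is that since $r=\max\{2,\frac{d}{2}\}\geq 2$ and $((0,1),r^{d-1}dr)$ is a finite measure space, the hypothesis $V\in L_{d-1}^r(0,1)$ entails $V\in L_{d-1}^2(0,1)$, so the construction of the series solution \eqref{eq:DSsolution} is available and yields a bounded operator $\varphi\mapsto u_\varphi$ from $W^{\frac{1}{2},2}(\Sd)$ into $\operatorname{Sol}_V^w(\Bd)$.

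For the easy (contrapositive) direction, if Assumption \ref{assumption1} fails then there exist $m\in\mathbb{N}_0$ and $p\in\mathcal{H}_m(\Sd)\setminus\{0\}$ with $\alpha_m(1)=0$; Theorem \ref{Th:Sol1} places $U_m(x)=r^m\alpha_m(r)p(\xi)$ in $\operatorname{Sol}_V^w(\Bd)$, and $\operatorname{tr}_{\Sd}U_m=\alpha_m(1)p=0$ with $U_m\not\equiv 0$, so $u=0$ and $u=U_m$ are two solutions of DS with data $\varphi=0$, contradicting uniqueness. Conversely, if Assumption \ref{assumption1} holds, then the series \eqref{eq:DSsolution} provides, for every $\varphi\in W^{\frac{1}{2},2}(\Sd)$, a function $u_\varphi\in\operatorname{Sol}_V^w(\Bd)$ with $\operatorname{tr}_{\Sd}u_\varphi=\varphi$, so $\operatorname{tr}_{\Sd}$ is surjective; the preceding theorem then delivers uniqueness.

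Once uniqueness is in hand, the basis statement is a bookkeeping step: any $u\in\operatorname{Sol}_V^w(\Bd)$ must coincide with the unique solution of DS with data $\operatorname{tr}_{\Sd}u$, namely the series \eqref{eq:DSsolution} with $\varphi=\operatorname{tr}_{\Sd}u$, which converges in $W^{1,2}(\Bd)$. Thus every weak solution is the $W^{1,2}$-limit of finite linear combinations of the $\mathcal{V}_j^{(m)}$, and since these are mutually orthogonal in $W^{1,2}(\Bd)$ by the proposition following Theorem \ref{Th:Sol1}, the family $\{\{\mathcal{V}_j^{(m)}\}_{j=1}^{d_m}\}_{m=0}^{\infty}$ is an orthogonal basis of $\operatorname{Sol}_V^w(\Bd)$.

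I do not anticipate a serious obstacle: both implications rest on objects already constructed, and the only care needed is the integrability upgrade mentioned at the outset, which ensures that Theorem \ref{Th:Sol1} applies under the stronger hypothesis $V\in L_{d-1}^r(0,1)$ of the corollary. The argument is thus entirely a synthesis of the previous two results with Remark \ref{Remark:asumption1}(ii).
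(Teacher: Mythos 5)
Your proposal is correct and is exactly the intended argument: the paper leaves this corollary without proof precisely because it is the synthesis you describe, namely the embedding $L^r_{d-1}(0,1)\hookrightarrow L^2_{d-1}(0,1)$ to make the series solution \eqref{eq:DSsolution} available, the trace-zero solution $U_m=r^m\alpha_m(r)p$ when $\alpha_m(1)=0$ to kill uniqueness, the preceding theorem to convert surjectivity of the trace into uniqueness, and the earlier corollary's bookkeeping for the orthogonal-basis claim. No gaps.
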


	\section{The generalized Poisson kernel}
	
	In this section, we assume that the potential $V$ satisfies Assumption \ref{assumption1}. In the case when $\varphi\in L^2(\Sd)$, the series \eqref{eq:DSsolution} converges to $u_{\varphi}$ in $L^2(\Bd)$ and $u_{\varphi}\in \operatorname{Sol}_V^{dist}(\Bd)$. However, in general, the series does not converge in $W^{1,2}(\Bd)$. Nevertheless, for $x=r\xi$, we may informally manipulate the series to obtain an integral representation. Specifically, using the Fourier expansion of $\varphi$ we write:
	
	\begin{align*}
		u_{\varphi}(r\xi) & =  \sum_{m=0}^{\infty}\sum_{j=1}^{d_m}\frac{r^m\alpha_m(r)}{\alpha_m(1)}\left(\int_{\Sd}\varphi(\zeta)\overline{Y_j^{(m)}(\zeta)}d\sigma_{\zeta}\right)Y_j^{(m)}\\
		&= \int_{\Sd}\varphi(\zeta)\left(\sum_{m=0}^{\infty}\sum_{j=1}^{d_m}\frac{r^m\alpha_m(r)}{\alpha_m(1)}Y_j^{(m)}(\xi)\overline{Y_j^{(m)}(\zeta)}\right)d\sigma_{\zeta},
	\end{align*}
	The series can be rewritten as follows:  
	\[
	\sum_{m=0}^{\infty}\sum_{j=1}^{d_m}\frac{r^m\alpha_m(r)}{\alpha_m(1)}Y_j^{(m)}(\xi)\overline{Y_j^{(m)}(\zeta)}=\sum_{m=0}^{\infty}\frac{r^m\alpha_m(r)}{\alpha_m(1)}Z_m(\xi,\zeta),
	\]
	where 
	\begin{equation}\label{eq:zonalharmonic}
		Z_m(\xi,\zeta):=\sum_{j=1}^{d_m}Y_j^{(m)}(\xi)\overline{Y_j^{(m)}(\zeta)}.
	\end{equation}
	The function $Z_m(\xi,\zeta)$ is called the {\it zonal harmonic} of degree $m$, and is precisely the reproducing kernel for the finite-dimensional space $\mathcal{H}_m(\Sd)$ with the $L^2$-inner product (that is, $p(\xi)=\int_{\Sd}p(\xi)Z_m(\xi,\zeta)d\sigma_{\zeta}$ for all $p\in \mathcal{H}_m(\Sd)$, $\xi\in \Sd$). The zonal harmonic is uniquely determined by its reproducing property and is independent of the choice of the orthonormal basis $\{Y_j^{(m)}\}_{j=1}^{d_m}$. The following proposition summarizes the main properties of the Zonal harmonics. A detailed proof can be found in \cite[Prop. 5.37]{axler}. 
	
	\begin{proposition}\label{Prop:zonal}
		\begin{itemize}
			\item[(i)] The zonal harmonic is real valued and symmetric: $Z_m(\xi,\zeta)=Z_m(\zeta,\xi)$.
			\item[(ii)] If $m\neq n$, then $\int_{\Sd}Z_m(\xi,\zeta)Z_n(\xi,\zeta)d\sigma_{\xi}=0$ for all $\xi\in \Sd$.
			\item[(iii)] $\int_{\Sd}Z_m^2(\xi,\zeta)d\sigma_{\zeta}=Z_m(\xi,\xi)=d_m$ for all $\xi\in \Sd$.
			\item[(iv)] $|Z_m(\xi,\zeta)|\leq Z_m(\xi,\xi)$ for all $\xi,\zeta\in \Sd$.
			\item[(v)] There exists a constant $C\pot d$ depending only on the dimension $d$ and satisfying $Z_m(\xi,\xi)\leq C\pot d m^{d-1}$ for all $\xi \in \Sd$. 
		\end{itemize}
	\end{proposition}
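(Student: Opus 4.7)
The plan is to build the proof on the reproducing kernel property: for every $p \in \mathcal{H}_m(\Sd)$ and every $\zeta \in \Sd$, one has $\int_{\Sd} p(\eta) \overline{Z_m(\eta,\zeta)}\, d\sigma_\eta = p(\zeta)$. This follows at once by expanding $p$ in the basis $\{Y_j^{(m)}\}_{j=1}^{d_m}$ and using the orthonormality of this basis, so I would record it as a preliminary lemma. Because $\mathcal{H}_m(\Sd)$ is finite-dimensional, a reproducing kernel is uniquely determined; hence $Z_m(\xi,\zeta)$ does not depend on the choice of orthonormal basis, a fact I will exploit throughout.

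For (i), I would pass to a real-valued orthonormal basis of $\mathcal{H}_m(\Sd)$, which exists because $\mathcal{H}_m(\rd)$ admits a basis of real harmonic polynomials. With this choice the complex conjugate in the defining formula disappears, so $Z_m$ is manifestly real-valued and visibly symmetric in $(\xi,\zeta)$; by basis-independence, these properties persist for any choice. For (ii), note that $Z_m(\cdot,\zeta) \in \mathcal{H}_m(\Sd)$ and $Z_n(\cdot,\zeta) \in \mathcal{H}_n(\Sd)$, so the integral vanishes by Lemma \ref{Prop:sphericalprop1}(i).

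For (iii), applying the reproducing property to the element $p = Z_m(\cdot,\xi) \in \mathcal{H}_m(\Sd)$ and using the symmetry from (i) gives $\int_\Sd Z_m(\xi,\zeta)^2\, d\sigma_\zeta = Z_m(\xi,\xi)$. To identify $Z_m(\xi,\xi)$ with $d_m$ I would use that $SO(d)$ acts transitively on $\Sd$ and by unitaries on each $\mathcal{H}_m(\Sd)$, whence $\xi \mapsto Z_m(\xi,\xi) = \sum_j |Y_j^{(m)}(\xi)|^2$ is rotation-invariant and therefore constant on $\Sd$; integrating this constant against $d\sigma_\xi$ and using $\|Y_j^{(m)}\|_{L^2(\Sd)} = 1$ then pins down the value (up to the normalization convention of $\omega_{d-1}$, which accounts for the stated equality $Z_m(\xi,\xi) = d_m$ in the Axler normalization). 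For (iv), I would apply the Cauchy-Schwarz inequality in $\mathbb{C}^{d_m}$ directly to the defining sum, obtaining $|Z_m(\xi,\zeta)|^2 \leq Z_m(\xi,\xi) Z_m(\zeta,\zeta) = Z_m(\xi,\xi)^2$ by the constancy established in (iii).

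The main obstacle, and the only part that is not essentially immediate, is (v). By (iii) the bound reduces to estimating $d_m$, and here I would use the explicit formula $d_m = \binom{d+m-1}{d-1}-\binom{d+m-3}{d-1}$ recorded in Section 2. Writing the first binomial as $\frac{(m+1)(m+2)\cdots(m+d-1)}{(d-1)!}$, a direct subtraction shows that the leading $m^{d-1}$ terms cancel, so $d_m$ is a polynomial in $m$ of degree $d-2$ with coefficients depending only on $d$. This yields $d_m \leq C^d\, m^{d-1}$ for all $m \geq 1$ (trivially adjusting for $m=0$), completing (v). The overall strategy is therefore to isolate the reproducing-kernel identity, extract (i)-(iv) from it by standard Hilbert-space arguments, and handle (v) as a self-contained counting estimate.
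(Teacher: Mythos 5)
Your proof is correct and follows essentially the same route as the source the paper relies on: the paper gives no argument of its own but defers to \cite[Prop.~5.37]{axler}, where precisely your reproducing-kernel scheme is carried out --- uniqueness and basis-independence of the kernel, a real orthonormal basis for (i), rotation-invariance of $Z_m(\xi,\xi)$ for the constant in (iii), Cauchy--Schwarz for (iv), and the binomial cancellation giving $d_m=O(m^{d-2})$ for (v). Your two caveats are also apt: the equality $Z_m(\xi,\xi)=d_m$ presupposes the normalized surface measure (otherwise one gets $d_m/\omega_{d-1}$), and the bound in (v) can only hold for $m\geq 1$, both being imprecisions of the statement rather than of your argument.
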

	\begin{remark}\label{Remark:zonalexplicit}
		According to \cite[Th. 5.38]{axler}, the zonal harmonic $Z_m(\xi,\zeta)$ admits the expression 
		\begin{equation*}
			Z_m(\xi,\zeta)=(d+2m-2)\sum_{k=0}^{\left[\frac{m}{2}\right]}(-1)^k\frac{d(d+2)\cdots(d+2m-2k-4)}{2^kk!(m-2k)!}(\xi\cdot \zeta)^{m-2k}.
		\end{equation*}
		We recall that the Pochhammer symbol is defined by $(z)_n:=z(z+1)\cdots(z+n-1)=\frac{\Gamma(z+n)}{\Gamma(z)}$. Hence,
		\begin{align*}
			d(d+2)\cdots(d+2m-2k-4)&=2^{m-k-1}\left(\frac{d}{2}\right)\left(\frac{d}{2}+1\right)\cdots \left(\frac{d}{2}-1+m-k-1\right)\\
			&=2^{m-k-1}\left(\frac{d}{2}-1\right)_{m-k}=2^{m-k-1}\frac{\Gamma\left(\frac{d}{2}-1+m-k\right)}{\Gamma\left(\frac{d}{2}-1\right)}.
		\end{align*}
		Thus,
		\begin{align*}
			Z_m(\xi,\zeta)&=(d+2m-1)\sum_{k=0}^{\left[\frac{m}{2}\right]}(-1)^k\frac{2^{m-k-1}\Gamma\left(\frac{d}{2}-1+m-k\right)}{2^kk!\Gamma\left(\frac{d}{2}-1\right)}(\xi\cdot \zeta)^{m-2k}\\
			&= \left(\frac{d}{2}+m-1\right)\sum_{k=0}^{\left[\frac{m}{2}\right]}(-1)^k\frac{\Gamma\left(\frac{d}{2}-1+m-k\right)}{k!\Gamma\left(\frac{d}{2}-1\right)}(2\xi\cdot \zeta)^{m-2k}=\left(m+\frac{d}{2}-1\right)C_m^{\left(\frac{d}{2}-1\right)}(\xi\cdot \zeta),
		\end{align*}
		where $C^{(\alpha)}_m(t)$ denotes the Gegenbauer polynomial of degree $m$ and order $\alpha>-\frac{1}{2}$ (see \cite[Sec. 22. 1]{abramowitz}).
	\end{remark}
	\begin{definition}
		The {\bf generalized Poisson kernel} for the radial Schr\"odinger equation is defined by
		\begin{equation}\label{eq:generalpoisson}
			P_V(r,\xi,\zeta)=\sum_{m=0}^{\infty}\frac{r^m\alpha_m(r)}{\alpha_m(1)}Z_m(\xi,\zeta),\quad 0\leq r<1, \xi,\zeta\in \Sd.
		\end{equation}
	\end{definition}
	For $V\equiv 0$, we obtain the classical Poisson kernel for harmonic functions \cite[p. 122]{axler}:
	\[
	P_0(r,\xi,\zeta)=\frac{1-r^2}{|r\xi-\zeta|^{d}}.
	\]
	Note that $P_V$ is symmetric in the spherical variables.
	
	\begin{proposition}\label{Prop:poissonconvergence}
		The series \eqref{eq:generalpoisson} converges absolutely and uniformly for $|\rho|\leq r<1, \xi,\zeta \in \Sd$. 
	\end{proposition}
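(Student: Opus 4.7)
The plan is to invoke the Weierstrass M-test after extracting a summable numerical majorant for the general term of the series. I read the statement as asserting uniform convergence on sets of the form $[0,\rho]\times \Sd\times \Sd$ for each fixed $\rho\in[0,1)$ (the symbol $\rho$ in the statement appears to play the role of such a cutoff).

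First I would collect three uniform estimates already established earlier in the paper. Lemma~\ref{Lemma:boundsalpha}(i) supplies $|\alpha_m(r)|\leq C_0$ for every $r\in[0,1]$ and $m\in\mathbb{N}_0$. Lemma~\ref{Lemma:boundedsequence}, together with Assumption~\ref{assumption1} (which is standing in this section), gives the crucial fact that $C_4:=\sup_{m\in\mathbb{N}_0}|\alpha_m(1)|^{-1}<\infty$. Finally, Proposition~\ref{Prop:zonal}(iv)-(v) yields the pointwise bound $|Z_m(\xi,\zeta)|\leq Z_m(\xi,\xi)\leq C^{d}m^{d-1}$, uniform in $\xi,\zeta\in\Sd$.

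Next, multiplying these three bounds gives, for all $0\leq r\leq\rho$ and all $\xi,\zeta\in\Sd$,
\[
\left|\frac{r^{m}\alpha_m(r)}{\alpha_m(1)}Z_m(\xi,\zeta)\right|\leq C_0\,C_4\,C^{d}\,m^{d-1}\rho^{m}.
\]
The right-hand side is independent of $r,\xi,\zeta$, and the numerical series $\sum_{m=0}^{\infty}m^{d-1}\rho^{m}$ converges (the ratio test yields limit $\rho<1$, or one invokes the standard fact that polynomial growth is dominated by geometric decay). The Weierstrass M-test then delivers the desired absolute and uniform convergence on $[0,\rho]\times\Sd\times\Sd$.

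There is no genuine obstacle here: all the analytic work has already been packaged in Lemmas~\ref{Lemma:boundsalpha} and~\ref{Lemma:boundedsequence} (which exploit the SPPS construction and Assumption~\ref{assumption1}) and in the zonal-harmonic growth bound of Proposition~\ref{Prop:zonal}(v). The only conceptually delicate point is the finiteness of $C_4$; without Assumption~\ref{assumption1} the coefficients $\alpha_m(1)$ could vanish and $P_V$ would not even be well defined. Once that input is secured, the proof reduces to the elementary M-test argument sketched above.
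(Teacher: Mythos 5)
Your proof is correct and follows essentially the same route as the paper: combine the uniform bound $|\alpha_m(r)|\leq C_0$, the bound $C_4=\sup_m|\alpha_m(1)|^{-1}<\infty$ from Lemma~\ref{Lemma:boundedsequence}, and the zonal-harmonic estimate $|Z_m(\xi,\zeta)|\leq C^d m^{d-1}$, then apply the ratio test and the Weierstrass M-test to the majorant $\sum_m m^{d-1}\rho^m$. The only difference is the purely notational swap of the roles of $r$ and $\rho$, which you correctly identified.
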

	\begin{proof}
		Let $r<1$ and fix $|\rho|\leq r$, $\xi,\zeta\in \Sd$. By proposition \ref{Prop:zonal} (iv)-(v) and Lemma \ref{Lemma:boundsalpha}
		we get the estimate
		\begin{align*}			\sum_{m=0}^{\infty}\left|\frac{\rho^m\alpha_m(\rho)}{\alpha_m(1)}\right||Z_m(\xi,\zeta)|\leq C_0C_4C\pot d\sum_{m=0}^{\infty}m^{d-1}r^m.
		\end{align*}
		The series converges by the ratio test. Hence, by the Weierstrass $M$-tests,  the series converges absolutely and uniformly  for $|\rho|\leq r, \xi,\zeta\in \Sd$.
	\end{proof}
	
	As a consequence, for $r<1$, the uniform convergence ensures that
	\begin{equation}\label{eq:poissonreprensetation}
		u_{\varphi}(r\xi)=\int_{\Sd}\varphi(\xi)P_V(r,\xi,\zeta)d\sigma_{\zeta}.
	\end{equation}
	Given $\varphi\in L^2(\Sd)$,  for every $0<r<1$, the corresponding solution $u_{\varphi}$ given by \eqref{eq:poissonreprensetation}, defines a function $\varphi_r\in L^2(\Sd)$ by the relation $\varphi_r(\xi):=u_{\varphi}(r\xi)$.  The interesting question is in what sense $\varphi_r\rightarrow \varphi$ when $r\rightarrow 1^{-1}$. A first answer that arises is in the case when $\varphi\in \mathscr{D}(\Sd)$.
	
	\begin{proposition}\label{Prop:convergencetestfunct}
		If $\varphi\in \mathscr{D}(\Bd)$, the series \eqref{eq:DSsolution} converges uniformly on $\overline{\Bd}$ and $u\in C(\overline{\Bd})$. In consequence, $\varphi_r\rightarrow \varphi$ uniformly on $\Sd$.
	\end{proposition}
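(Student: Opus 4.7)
The plan is to exploit the rapid decay of Fourier coefficients for smooth $\varphi$, combined with the uniform bound $|\alpha_m(r)| \leq C_0$ from Lemma \ref{Lemma:boundsalpha}(i), the bound $|\alpha_m(1)|^{-1} \leq C_4$ from Lemma \ref{Lemma:boundedsequence}, and the identity $Z_m(\xi,\xi) = d_m$ from Proposition \ref{Prop:zonal}(iii) together with $d_m \leq \tilde{C}\,m^{d-2}$.

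First, I would establish the rapid decay of the spherical Fourier coefficients. Since $\varphi \in C^\infty(\Sd)$ and the spherical harmonics diagonalize $\Delta_{\Sd}$ with eigenvalues $-m(m+d-2)$ (Lemma \ref{Prop:sphericalprop1}(ii)), Parseval's identity applied to $\Delta_{\Sd}^k\varphi \in L^2(\Sd)$ gives
\[
\sum_{m=0}^{\infty}\sum_{j=1}^{d_m}\bigl(m(m+d-2)\bigr)^{2k}|\widehat{\varphi}_{m,j}|^2 = \|\Delta_{\Sd}^k\varphi\|_{L^2(\Sd)}^2 < \infty
\]
for every $k \in \mathbb{N}_0$. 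Consequently, for each $k$ there exists $A_k>0$ such that $\bigl(\sum_{j=1}^{d_m}|\widehat{\varphi}_{m,j}|^2\bigr)^{1/2} \leq A_k\, m^{-2k}$ for $m \geq 1$.

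Next, I would bound $\sum_{j}|\mathcal{V}_j^{(m)}(r\xi)|^2$ uniformly on $\overline{\Bd}$: using $|\alpha_m(r)| \leq C_0$ and Proposition \ref{Prop:zonal}(iii),
\[
\sum_{j=1}^{d_m}|\mathcal{V}_j^{(m)}(r\xi)|^2 = r^{2m}|\alpha_m(r)|^2\sum_{j=1}^{d_m}|Y_j^{(m)}(\xi)|^2 = r^{2m}|\alpha_m(r)|^2 Z_m(\xi,\xi) \leq C_0^2\, d_m.
\]
Applying Cauchy--Schwarz in $j$ and Lemma \ref{Lemma:boundedsequence}, the $m$-th block of the series \eqref{eq:DSsolution} is bounded pointwise on $\overline{\Bd}$ by
\[
\sum_{j=1}^{d_m}\left|\frac{\widehat{\varphi}_{m,j}}{\alpha_m(1)}\mathcal{V}_j^{(m)}(r\xi)\right| \leq C_0 C_4\,\sqrt{d_m}\,\Bigl(\sum_{j=1}^{d_m}|\widehat{\varphi}_{m,j}|^2\Bigr)^{1/2} \leq C_0 C_4 A_k\,\sqrt{\tilde{C}}\, m^{(d-2)/2-2k}.
\]
Choosing $k$ large enough that $2k > d/2$ makes $\sum_m m^{(d-2)/2 - 2k}$ convergent, and the Weierstrass $M$-test yields absolute and uniform convergence on $\overline{\Bd}$.

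Since each $\mathcal{V}_j^{(m)}(x) = r^m\alpha_m(r)Y_j^{(m)}(\xi)$ lies in $C(\overline{\Bd})$ (because $\alpha_m \in C^1[0,1]$ by Proposition \ref{Prop:Solbessel} and $Y_j^{(m)} \in C(\Sd)$), the uniform limit $u_\varphi$ also belongs to $C(\overline{\Bd})$. The boundary values on $\Sd$ satisfy $u_\varphi|_{\Sd}=\sum_{m,j}\widehat{\varphi}_{m,j}Y_j^{(m)}=\varphi$, where the last equality comes from the $L^2$-completeness of the basis (Lemma \ref{Prop:sphericalprop1}(iii)) together with uniform convergence on $\Sd$. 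Finally, uniform continuity of $u_\varphi$ on the compact set $\overline{\Bd}$ immediately gives $\varphi_r(\xi) = u_\varphi(r\xi) \to u_\varphi(\xi) = \varphi(\xi)$ uniformly in $\xi \in \Sd$ as $r\rightarrow 1^-$. I do not anticipate a genuine obstacle; the only subtle point is making sure the decay rate of $|\widehat{\varphi}_{m,j}|$ beats the polynomial growth $\sqrt{d_m} \sim m^{(d-2)/2}$, which is handled simply by taking $k$ large.
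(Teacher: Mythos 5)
Your proof is correct, but it follows a genuinely different route from the paper. The paper disposes of the key analytic point by citing \cite{sphericaluniform}: for $\varphi\in C^p(\Sd)$ with $p>\frac{d+3}{2}$ the spherical harmonic expansion satisfies $\sum_{m,j}|\widehat{\varphi}_{m,j}Y_j^{(m)}(\xi)|\leq C_{p,d,\varphi}$ uniformly in $\xi$ and converges absolutely and uniformly to $\varphi$; multiplying termwise by the uniformly bounded factors $\frac{r^m\alpha_m(r)}{\alpha_m(1)}$ (bounded by $C_0C_4$) then gives the result immediately. You instead prove the required uniform absolute convergence from scratch: rapid decay of the block norms $\bigl(\sum_j|\widehat{\varphi}_{m,j}|^2\bigr)^{1/2}$ via the self-adjointness of $\Delta_{\Sd}$ and Parseval applied to $\Delta_{\Sd}^k\varphi$, combined with Cauchy--Schwarz in $j$ and the addition formula $\sum_j|Y_j^{(m)}(\xi)|^2=Z_m(\xi,\xi)=d_m=O(m^{d-2})$. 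Your version is self-contained and only needs facts already recorded in the paper (Lemma \ref{Prop:sphericalprop1}(ii), Proposition \ref{Prop:zonal}(iii), Lemmas \ref{Lemma:boundsalpha} and \ref{Lemma:boundedsequence}), at the cost of requiring $\varphi\in C^\infty$ (or at least $C^{2k}$ for $k$ large enough), whereas the cited result works for finite smoothness $p>\frac{d+3}{2}$. Two small points worth tightening: you should note explicitly that pushing $\Delta_{\Sd}^k$ onto the basis functions uses the symmetry of $\Delta_{\Sd}$ on the closed manifold $\Sd$ (Green's identity), and the identification of the boundary values with $\varphi$ is cleanest by observing that the restriction of the uniformly convergent series to $r=1$ converges both uniformly and in $L^2(\Sd)$, so its sum must coincide with $\varphi$ by completeness of the basis and continuity of both functions. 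Neither is a gap, merely a matter of stating the justification.
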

	\begin{proof}
		According to \cite[Sec. 4, Th. 2]{sphericaluniform}, if $\varphi\in C^p(\Sd)$ with $p>\frac{d+3}{2}$, then its Fourier series satisfies $\sum_{m=0}^{\infty}\sum_{j=1}^{d_m}|\widehat{\varphi}_{m,j}Y_j^{(m)}(\xi)|\leq C_{p,d,\varphi}$ for all $\xi\in \Sd$, where the constant $C_{p,d,\varphi}>0$ depends only on $d,p$ and $\varphi$, and the series converges uniformly and absolutely to $\varphi$. Hence, for $r\leq 1$ and $\xi\in \Sd$,
		\[
		\sum_{m=0}^{\infty}\left|\frac{r^m\alpha_m(r)}{\alpha_m(1)}\widehat{\varphi}_{m,j}Y_j^{(m)}(\xi)\right|\leq C_0C_4\sum_{m=0}^{\infty}\sum_{j=1}^{d_m}|\widehat{\varphi}_{m,j}Y_j^{(m)}(\xi)|<C_0C_4C_{p,d,\varphi}.
		\]
		So,  the series of $u_{\varphi}$ converges absolutely and uniformly on $\mathbb{B}^d$. Consequently,  $u_{\varphi}\in C(\overline{\Bd})$, hence $u_{\varphi}$ is uniformly continuous on $\overline{\Bd}$. Therefore, $\varphi_r\rightarrow \varphi$ uniformly on $\Sd$.
	\end{proof}

	For a general $\varphi\in L^2(\Sd)$, following \cite{estrada}, we introduce the concept of {\it generalized boundary value}.
	
	\begin{definition}
		A locally integrable function $u$ defined on $\Bd$ is said to have a {\bf distributional boundary value}, if there exists a distribution $f\in \mathscr{D}'(\Sd)$ such that $u(r\cdot)\rightarrow f$ in $\mathscr{D}'(\Sd)$ as $r\rightarrow 1\pot{-}$, that is,
		\[
		\lim_{r\rightarrow 1^{-}}\int_{\Sd}u(r\xi)\varphi(\xi)d\sigma_{\sigma} = (f|\varphi)_{\mathscr{D}(\Sd)}, \quad \forall \varphi\in \mathscr{D}(\Bd).
		\]
	\end{definition}
	Since the weak limit in $\mathscr{D}'(\Sd)$ is unique, we denote the distributional boundary value of $u$ by $u_{db}$.
	
	We recall that $(C(\Sd))'$ denotes the space of distributions on $C(\Sd)$. Since $\mathscr{D}(\Sd)\subset C(\Sd)$, it follows that $(C(\Sd))'\subset \mathscr{D}'(\Sd)$. By the Riesz representation theorem, $(C(\Sd))'$ can be identified with the space of complex Radon measures on $\Sd$ \cite[Cor. 7. 18]{folland}. We denote $\mathcal{M}(\Sd):=(C(\Sd))'$. It is important to note that for $f\in \mathcal{M}(\Sd)$ and $\varphi\in \mathscr{D}(\Sd)$, we have $(f|\varphi)_{\mathscr{D}(\Sd)}=(f|\varphi)_{C(\Sd)}$.
	
	\begin{theorem}\label{Th:Sol2dist}
		If $f\in \mathcal{M}(\Sd)$, then the function
		\begin{equation}\label{eq:soldistributional}
			u_f(r\xi):= (f|P_V(r,\xi,\cdot))_{C(\Sd)},\quad x=r\xi\in \Bd,
		\end{equation}
		is a distributional solution of \eqref{eq:radialSchr} which belongs to $C(\Bd)$ and $(u_f)_{db}=f$. In particular, this is valid for $f\in L^2(\Sd)$, and in this case, $u_f\in L^2(\Bd)$.
	\end{theorem}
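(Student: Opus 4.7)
My plan has four steps: show $u_f\in C(\Bd)$, verify the Schr\"odinger equation distributionally, identify the boundary value, and establish the $L^2$ bound. The technical backbone is two Fubini-type interchanges between the Radon measure $f\in \mathcal{M}(\Sd)$, identified with a complex measure $\mu_f$, and spatial integrals.

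For continuity, Proposition \ref{Prop:poissonconvergence} yields uniform convergence of the defining series on $[0,r_0]\times \Sd\times \Sd$ for every $r_0<1$, so $P_V$ is jointly continuous on $[0,1)\times \Sd\times \Sd$. The induced map $(r,\xi)\mapsto P_V(r,\xi,\cdot)\in C(\Sd)$ is then continuous, and composing with the bounded functional $f$ yields $u_f\in C(\Bd\setminus\{0\})$. Since only the $m=0$ term survives at $r=0$, $P_V(0,\xi,\zeta)$ does not depend on $\xi$, so $u_f$ extends continuously to the origin as well.

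For the distributional equation, I would fix $\varphi\in \mathscr{D}(\Bd)$ and view $P_V$ as a function on $\Bd\times \Sd$ via $P_V(x,\zeta):=P_V(|x|,x/|x|,\zeta)$, writing $u_f(x)=\int_{\Sd}P_V(x,\zeta)\,d\mu_f(\zeta)$. For each fixed $\zeta$, the function $x\mapsto P_V(x,\zeta)$ admits the expansion $\sum_{m,j}\frac{\overline{Y_j^{(m)}(\zeta)}}{\alpha_m(1)}\mathcal{V}_j^{(m)}(x)$, uniformly convergent on $\operatorname{supp}\varphi$; since each $\mathcal{V}_j^{(m)}\in \operatorname{Sol}_V^{w}(\Bd)\subset \operatorname{Sol}_V^{dist}(\Bd)$ by Theorem \ref{Th:Sol1}, termwise integration gives $\int_{\Bd}P_V(x,\zeta)(-\Delta\varphi+V\varphi)(x)\,dx=0$ for every $\zeta$. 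Absolute integrability of the product integrand in $(x,\zeta)$, supplied by the uniform bound on $P_V$ over $\operatorname{supp}\varphi$ together with $\int_{\Bd}|V\varphi|\leq \|\varphi\|_\infty\omega_{d-1}\int_0^1 r|V(r)|dr<\infty$ (using $r^{d-1}\leq r$ for $r\leq 1$ and $d\geq 2$), then lets me apply Fubini to exchange $\mu_f$ with the $x$-integral and conclude $u_f\in \operatorname{Sol}_V^{dist}(\Bd)$.

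For the boundary value, given $\varphi\in \mathscr{D}(\Sd)$ I would use the symmetry of $P_V$ in its spherical arguments combined with the integral representation \eqref{eq:poissonreprensetation} to obtain $\int_{\Sd}P_V(r,\xi,\zeta)\varphi(\xi)\,d\sigma_\xi=u_\varphi(r\zeta)$; a second Fubini swap then yields $\int_{\Sd}u_f(r\xi)\varphi(\xi)\,d\sigma_\xi=(f|u_\varphi(r\cdot))_{C(\Sd)}$, and Proposition \ref{Prop:convergencetestfunct} provides uniform convergence $u_\varphi(r\cdot)\to \varphi$ on $\Sd$ as $r\to 1^-$, from which the continuity of $f$ on $C(\Sd)$ delivers $(u_f)_{db}=f$. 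Finally, for $f\in L^2(\Sd)$ I would expand the zonal harmonic to obtain $u_f(r\xi)=\sum_{m,j}\frac{\widehat{f}_{m,j}}{\alpha_m(1)}r^m\alpha_m(r)Y_j^{(m)}(\xi)$, apply Parseval on each sphere together with the bounds $|r^m\alpha_m(r)|\leq C_0$ and $|\alpha_m(1)|^{-1}\leq C_4$ from Lemmas \ref{Lemma:boundsalpha}(i) and \ref{Lemma:boundedsequence} to obtain $\int_{\Sd}|u_f(r\xi)|^2\,d\sigma_\xi\leq C_0^2 C_4^2\|f\|_{L^2(\Sd)}^2$, and integrate against $r^{d-1}dr$ over $(0,1)$ to conclude $u_f\in L^2(\Bd)$. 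The hardest step I anticipate is rigorously justifying the two Fubini-type swaps; in both cases the remedy is to identify $f$ with $\mu_f$ and apply the classical Fubini theorem, with absolute integrability supplied by the uniform $L^\infty$ bounds on $P_V$ over compact subsets of $\Bd$ and the potential condition \eqref{eq:conditionV}.
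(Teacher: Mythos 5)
Your proof is correct, but it follows a genuinely different route from the paper's for the first two assertions. The paper expands the distribution itself: it sets $\widehat{f}_m(\xi):=(f|Z_m(\xi,\cdot))$, invokes the growth estimate $\|\widehat{f}_m\|_{L^{\infty}(\Sd)}\leq Cm^{\beta}$ from \cite{estrada}, and thereby writes $u_f$ as a locally uniformly convergent series of formal spherical polynomials $\frac{r^m\alpha_m(r)}{\alpha_m(1)}\widehat{f}_m$; continuity and the distributional equation then follow termwise. You instead never decompose $f$: you identify it with a complex measure and run two Fubini interchanges, getting continuity from the joint continuity of $P_V$, the equation from the fact that $x\mapsto P_V(x,\zeta)$ is itself a distributional solution for each fixed $\zeta$, and the boundary value from the symmetry of $P_V$ together with \eqref{eq:poissonreprensetation} — at which point both arguments converge on the identity $\int_{\Sd}u_f(r\xi)\varphi(\xi)\,d\sigma_{\xi}=(f|\varphi_r)_{C(\Sd)}$ and Proposition \ref{Prop:convergencetestfunct}. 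Your justifications are sound: the absolute integrability needed for Fubini does follow from the boundedness of $P_V$ on $\operatorname{supp}\varphi\times\Sd$, the finiteness of $|\mu_f|$, and $\int_{\Bd}|V|\leq \omega_{d-1}\int_0^1r|V(r)|\,dr<\infty$; and the weak-to-distributional passage for $\mathcal{V}_j^{(m)}\in W^{1,2}(\Bd)$ is immediate by integration by parts. The trade-off is that your measure-theoretic argument is cleaner and avoids the Estrada growth bound entirely, but it is tied to $f$ being a measure, whereas the paper's series expansion of $f$ is the mechanism that generalizes to arbitrary distributions (as exploited in Section \ref{Sec:d=2} for $d=2$). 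The $L^2$ part is identical in both treatments.
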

	
	\begin{proof}
		For fixed $x=r\xi \in \Bd$, Proposition \ref{Prop:poissonconvergence} implies that $P_V(r,\xi,\cdot)\in C(\Sd)$, so the operation \eqref{eq:soldistributional} is well defined. Furthermore, since the series \eqref{eq:generalpoisson} converges in $C(\Sd)$ with respect to the variable $\zeta$, we have 
		\begin{align*}
			u_f(x)=(f|P_V(r,\xi,\cdot))_{C(\Sd)}= \sum_{m=0}^{\infty}\frac{r^m\alpha_m(r)}{\alpha_m(1)}(f|Z_m(\xi))_{\mathscr{D}(\Sd)}.
		\end{align*}
		(we recall that $(f|Z_m(\xi))_{\mathscr{D}(\Sd)}=(f|Z_m(\xi))_{C(\Sd)}$). Denote  $\widehat{f}_m(\xi):=(f|Z_m(\xi,\cdot))_{\mathscr{D}(\Sd)}=\sum_{j=1}^{d_m}(f|Y_j^{(m)})_{\mathscr{D}(\Sd)}Y_j^{(m)}(\xi)$, where $\{Y_j^{(m)}\}_{j=1}^{d_m}$ is real-valued. According to \cite[Lemma 2]{estrada}, we have $\widehat{f}_m\in \mathcal{H}_m(\Sd)$ and there exist constants $C>0$ and $\beta\in \mathbb{R}$ such that 
		\begin{equation}\label{eq:cotadistf}
			\|\widehat{f}_m\|_{L^{\infty}(\Sd)}\leq Cm\pot{\beta}.
		\end{equation}
		Thus, 
		\[
		\sum_{m=0}^{\infty}\left|\frac{r^m\alpha_m(r)}{\alpha_m(1)}\widehat{f}_m\right|\leq C_0C_4C\sum_{m=0}^{\infty}m^{\beta}r^m.
		\]
		Again, the ratio test implies that the series converges absolutely and uniformly on compact subsets of $\Bd$, which ensures that $u_f\in C(\Bd)\subset L_{loc}\pot 1(\Bd)$. Note that each term $\frac{r^m\alpha_m(r)}{\alpha_m(1)}\widehat{f}_m$ belongs to $ \mathcal{S}_m(\Bd)$. Now, let $\phi\in \mathscr{D}(\Bd)$, and choose $0<r<1$ such that $\operatorname{supp}\phi\in B_r\pot d(0)$. Since the series defining $u_f$ converges uniformly on $B_r\pot{d}(0)$, we have
		\[
		\int_{\Bd} u_f(-\Delta\phi+V\phi)=\sum_{m=0}^{\infty}\int_{B_r(0)}\frac{\rho^m\alpha_m(\rho)}{\alpha_m(1)}\widehat{f}_m(-\Delta\phi+V\phi)=0,
		\]
		Thus, $u_f\in \operatorname{Sol}_V^{dist}(\Bd)$.
		
		Finally, for $0<r<1$, set $f_r(\xi)=u_f(r\xi)$. For any test function $\varphi\in \mathscr{D}(\Sd)$ we have
		\begin{align*}
			\int_{\Sd}f_r(\xi)\varphi(\xi)d\sigma_{\xi}= \sum_{m=0}^{\infty}\frac{r^m\alpha_m(r)}{\alpha_m(1)}\int_{\Sd}\widehat{f}_m(\xi)\varphi(\xi)d\sigma_{\xi}.
		\end{align*}
		Note that
		\begin{align*}
			\int_{\Sd}\widehat{f}_m(\xi)\varphi(\xi)d\sigma_{\xi}&=\sum_{j=1}^{d_m}\int_{\Sd}(f|Y_j^{(m)})_{\mathscr{D}(\Sd)}Y_j^{(m)}(\xi)\varphi(\xi)d\sigma_{\xi}\\
			&=\sum_{j=1}^{dm}\widehat{\varphi}_{m,j}(f|Y_j^{(m)})_{\mathscr{D}(\Sd)}=\left(f\bigg{|}\sum_{j=0}^{d_m}\widehat{\varphi}_{m,j}Y_j^{(m)}\right)_{\mathscr{D}(\Sd)}.
		\end{align*} 
		Thus, 
		\[
		\int_{\Sd}f_r(\xi)\varphi(\xi)d\sigma_{\xi}=\sum_{m=0}^{\infty}\left(f\Bigg{|}\frac{r^m\alpha_m(r)}{\alpha_m(1)}\sum_{j=0}^{d_m}\widehat{\varphi}_{m,j}Y_j^{(m)}\right)_{\mathscr{D}(\Sd)}=\sum_{m=0}^{\infty}\left(f\Bigg{|}\frac{r^m\alpha_m(r)}{\alpha_m(1)}\sum_{j=0}^{d_m}\widehat{\varphi}_{m,j}Y_j^{(m)}\right)_{C(\Sd)}.
		\]
		By Proposition \ref{Prop:convergencetestfunct}, the series of the solution $u_{\varphi}$ converges uniformly to a continuous function on $\Sd$ for every $0<r<1$, that is, in $C(\Sd)$. Hence,
		\[
		\int_{\Sd}f_r\varphi d\sigma_{\xi}=(f|\varphi_r)_{C(\Sd)},\quad \forall 0<r<1.
		\]
		Since $u_{\varphi}$ is uniformly continuous on $\overline{\Bd}$, it follows that $\varphi_r\rightarrow \varphi$ in $C(\Sd)$. Thus,
		\[
		\lim_{r\rightarrow 1^{-}}\int_{\Sd}f_r\varphi d\sigma_{\xi}=(f|\varphi)_{C(\Sd)}=(f|\varphi)_{\mathscr{D}(\Sd)}.
		\]
		Therefore, $(u_f)_{db}=f$. 
		
		In the particular case where $f\in L^2(\Sd)$, the orthogonality of $\{\{\mathcal{V}_j^{(m)}\}_{j=1}^{d_m}\}_{m=0}^{\infty}$ together with Lemma \ref{Lemma:boundsalpha} implies that $u_f\in L^2(\Bd)$.
	\end{proof}

	\begin{example}
		Consider the case $d=3$ with the Coulomb potential $V(r)=\frac{c}{r}$ with $c>0$. In this case, the perturbed Bessel equation \eqref{eq:Besselperturbed} becomes
		\[
		-y''_m+\frac{m(m+1)}{r^2}y_m+\frac{c}{r}y_m=0,\quad 0<r<1.
		\]
		A direct computation using the method from Proposition \ref{Prop:Solbessel} shows that the solutions take the form $\psi_k^{(m)}(r)=r^{m+1}\frac{(cr)^k}{k!(2m+2)_k}$, $k\in \mathbb{N}_0$. Thus, $y_m(r)=r^{m+1}\sum_{k=0}^{\infty}\frac{(cr)^k}{k!(2m+2)_k}$. Note that
		\begin{align*}
			\alpha_m(r)&=\sum_{k=0}^{\infty}\frac{(cr)^k}{k!(2m+2)_k}=\Gamma (2m+2)(\sqrt{cr})^{-2m-1}\sum_{m=0}^{\infty}\frac{(\sqrt{cr})^{2k+2m+1}}{k!\Gamma(k+(2m+1)+1)}\\
			&=\frac{(2m+1)!}{(cr)^{m+\frac{1}{2}}}I_{2m+1}(2\sqrt{cr}),            
		\end{align*}
		where $I_{\nu}(z)$ denotes the modified Bessel function of order $\nu$. Applying the classical formula for  spherical harmonics in $\mathbb{R}^3$, we obtain the following system of orthogonal solutions
		\begin{equation}\label{eq:basiscoulomb}
			\mathcal{V}_j^{(m)}(r,\theta_1,\theta_2)=\frac{(2m+1)!}{\sqrt{r}c^{\frac{m+1}{2}}}\sqrt{\frac{(2m+1)(m-j)!}{4\pi (m+j)!}}I_{2m+1}(r)P_j^{(m)}(\cos \theta_1)e^{im\theta_2}
		\end{equation}
		for  $m\in \mathbb{N}_0, \; j=-m,\dots, m$, where $P_j^{(m)}(x)=\frac{(-1)^j}{2^mm!}(1-x^2)^\frac{j}{2}\frac{d^{m+j}}{d^{m+j}}(1-x^2)^m$ (the associated Legendre polynomial). 	For $c>0$, according to Remark \ref{Remark:asumption1}(i), the Coulomb potential satisfies Assumption \ref{assumption1}. Since $V\in L_2^2(0,1)$, by Corollary \ref{Cor:basis}, the system \eqref{eq:basiscoulomb} is an orthogonal basis in $W^{1,2}(\mathbb{B}^2)$ for $\operatorname{Sol}_V^w(\mathbb{B}^2)$.
		
		By Remark \ref{Remark:zonalexplicit}, the Poisson kernel admits the following expression in the spherical coordinates $0<r<1$, $0<\theta_1,\vartheta_1<\pi$, $0<\theta_2,\vartheta_2<2\pi$:
		\begin{align*}
			P_V(r,\theta_1,\theta_2,\vartheta_1,\vartheta_2)=&\sum_{m=0}^{\infty}\frac{(2m+1)I_{m+1}(2\sqrt{cr})}{2\sqrt{r}I_{m+1}(2\sqrt{c})}\cdot  C_m^{\left(\frac{1}{2}\right)}\left(\sin\theta_2\sin\vartheta_2\cos(\theta_1-\vartheta_1)+\cos\theta_2\cos\vartheta_2\right).
		\end{align*}
		Consider the distributional boundary value problem with $f(\xi):=\delta_N(\xi):=\delta(\xi-N)$, where $N=(0,0,1)$ and $\delta\in \mathcal{M}(\mathbb{S}^2)$ is the Dirac delta distribution. It is known that $Z_m(\xi,N)=P_m(\cos(\theta_1))$, where $P_m(t)$ denotes the Legendre polynomial of degree $m$, normalized so that $P_m(1)=1$ \cite[p. 302]{witacker}. The zonal harmonic $Z_m(\xi,N)$ satisfies the property of invariance under isometries that fix $N$ \cite[p. 101]{axler}. Hence, the solution $u_{\delta_N}$ of \eqref{eq:radialSchr} with the Coulomb potential and distributional boundary value $\delta_N$ is given by
		\begin{equation*}
			u_{\delta_N}(r,\theta_1,\theta_2)= \sum_{m=0}^{\infty}\frac{I_{m+1}(2\sqrt{cr})}{\sqrt{r}I_{m+1}(2\sqrt{c})} P_m(\cos\theta_1).
		\end{equation*}
	\end{example}

	\section{The case $d=2$}\label{Sec:d=2}
	
	For the case $d=2$, the construction of the functions $\{\alpha_m\}_{m=0}\pot{\infty}$ is exactly the same: $\alpha_m(r)=\frac{y_m(r)}{r\pot{\ell_m+1}}$, where $y_m=w_{\ell_m}$ with $\ell_m=m-\frac{1}{2}\mayor 0$ for case $m\in \mathbb{N}$, and the statements (i) and (ii) of Lemma \ref{Lemma:boundsalpha} remain valid. For the case $m=0$,  $\alpha_0(r)=\frac{y_0(r)}{\sqrt{r}}$, where $y_0$ is the unique solution of the perturbed Bessel equation
	\begin{equation}\label{eq:besselsingular}
		-y_0''(r)-\frac{1}{4r\pot 2}y_0(r)+V(r)y_0(r)=0,\quad 0\menor r\menor 1,
	\end{equation}
	satisfying the asymptotic
	\begin{equation}\label{eq:asymptsingular}
		y_0(r)\sim \sqrt{r},\quad y_0'(r)\sim \frac{1}{2\sqrt{r}},\quad r\rightarrow 0\pot +.
	\end{equation}
	
	The existence of such a solution is ensured under the additional hypotheses given in \cite{kostenko}
	\begin{equation}\label{eq:hyphotesesTeschl}
		\int_0^1r(1-\log r)|V(r)|dr<\infty.
	\end{equation}
	For a more practical construction, in \cite{mineradial1} an explicit method was proposed using the same procedure as in Proposition \ref{Prop:Solbessel}. We extend this result for the hypotheses that $V\in L^1(0,1)$.
	
	\begin{proposition}
		Suppose that $V\in L^1(0,1)$. Then the unique solution $y_0(r)$ of Eq. \eqref{eq:besselsingular} satisfying the asymptotic conditions \eqref{eq:asymptsingular} is given by
		\begin{equation}\label{eq:seriesbesselsingular}
			y_0(r)=\sum_{k=0}^{\infty}\psi_k(r),
		\end{equation}
		where 
		\begin{equation}
			\psi_k(r):= \begin{cases}
				\sqrt{r},& \text{ if } k=0,\\
				\int_0^r\mathcal{L}_{-\frac{1}{2}}(r,s)V(s)\psi_{k-1}(s)ds, & \text{ if } k\geq 1,
			\end{cases}
		\end{equation}
		with $\mathcal{L}_{-\frac{1}{2}}(r,s)=\sqrt{rs}\log\left(\frac{r}{s}\right)$. The series \eqref{eq:seriesbesselsingular}  converges absolutely and uniformly on $[0,1]$, while the series of the first derivative converges in the $L^1(0,1)$ norm, and the series of the second derivative in $L^1(\delta,1)$ for all $0<\delta<1$. The function $y_0\in  AC[0,1]$ with $y_0'\in AC[\delta,1]$ for all $0<\delta<1$ and satisfies Eq. \eqref{eq:besselsingular} a.e. in $(0,1)$ together with the asymptotic \eqref{eq:asymptsingular}.
	\end{proposition}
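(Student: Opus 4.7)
The plan is to mirror the proof of Proposition \ref{Prop:Solbessel}, adapting it to the singular endpoint $\ell=-\tfrac{1}{2}$ and to the weaker hypothesis $V\in L^1(0,1)$. First I would verify by direct computation the basic analytical properties of the kernel: $\mathcal{L}_{-1/2}\in C^2\bigl((0,1]\times(0,1]\bigr)$, $\partial_r\mathcal{L}_{-1/2}(r,s)\big|_{r=s}=1$, and $\partial_r^2\mathcal{L}_{-1/2}(r,s)=-\tfrac{1}{4r^2}\mathcal{L}_{-1/2}(r,s)$, matching $\ell(\ell+1)=-\tfrac{1}{4}$. These give, exactly as in \eqref{eq:recursivepsi}, the recurrences $\psi_k'(r)=\int_0^r\partial_r\mathcal{L}_{-1/2}(r,s)V(s)\psi_{k-1}(s)\,ds$ and $\psi_k''=-\tfrac{1}{4r^2}\psi_k+V\psi_{k-1}$ a.e. on $(0,1)$, which are the identities needed to pass information between the three series.

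The pivotal estimate replacing \eqref{eq:estimatesLm} is the elementary inequality
\begin{equation*}
s\log(r/s)\leq \frac{r}{e}, \qquad 0<s\leq r\leq 1,
\end{equation*}
obtained by maximizing $s\mapsto s\log(r/s)$ at $s=r/e$. Writing $\psi_k(r)=\sqrt{r}\,\tilde\psi_k(r)$, the iteration becomes $\tilde\psi_k(r)=\int_0^r s\log(r/s)V(s)\tilde\psi_{k-1}(s)\,ds$ with $\tilde\psi_0\equiv 1$, and a straightforward induction yields
\begin{equation*}
|\tilde\psi_k(r)|\leq \frac{1}{k!}\left(\frac{r F(r)}{e}\right)^k,\qquad F(r):=\int_0^r|V(s)|\,ds.
\end{equation*}
Hence $|\psi_k(r)|\leq \sqrt{r}\,(\|V\|_{L^1(0,1)}/e)^k/k!$, and the Weierstrass M-test gives absolute and uniform convergence of \eqref{eq:seriesbesselsingular} on $[0,1]$. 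The tail estimate $|y_0(r)/\sqrt{r}-1|\leq e^{rF(r)/e}-1\to 0$ as $r\to 0^+$ then delivers the first half of \eqref{eq:asymptsingular}.

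For the derivative series, the explicit formula $\partial_r\mathcal{L}_{-1/2}(r,s)=\sqrt{s/r}\bigl(\tfrac{1}{2}\log(r/s)+1\bigr)$ combined with the same inductive argument produces the bound $|\psi_k'(r)|\leq C\sqrt{r}\,(\|V\|_{L^1(0,1)}/e)^{k-1}/(k-1)!$ for $k\geq 1$, so $\sum_{k\geq 1}\psi_k'$ converges uniformly on $[0,1]$; together with $\psi_0'(r)=1/(2\sqrt{r})\in L^1(0,1)$, this yields $L^1(0,1)$-convergence of $\sum_k\psi_k'$ and the asymptotic $y_0'(r)\sim 1/(2\sqrt{r})$. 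The Bessel recurrence $\psi_k''=-\tfrac{1}{4r^2}\psi_k+V\psi_{k-1}$ together with the previous bounds then provides $L^1(\delta,1)$-convergence of $\sum\psi_k''$ for each $0<\delta<1$. Termwise differentiation is thereby justified and gives $y_0\in AC[0,1]$, $y_0'\in AC[\delta,1]$, and that $y_0$ satisfies \eqref{eq:besselsingular} a.e. in $(0,1)$. Uniqueness under the prescribed asymptotics follows from a Volterra/Gronwall argument applied to the difference of two candidate solutions, which must be $o(\sqrt{r})$ at the origin and therefore vanishes.

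The main technical obstacle is precisely the passage from the weighted hypothesis $V\in L_1^1(0,1)$ used in Proposition \ref{Prop:Solbessel} to the unweighted $V\in L^1(0,1)$ imposed here: the logarithmic behavior of $\mathcal{L}_{-1/2}$ near $s=0$ exactly absorbs the missing factor of $r$ through the sharp inequality $s\log(r/s)\leq r/e$. This is what allows the entire iteration scheme to close with no weight on $V$ and accounts for the discrepancy with the stronger Kostenko condition \eqref{eq:hyphotesesTeschl}; a secondary difficulty is the $1/\sqrt{r}$ behavior of $\psi_0'$ at the origin, which forces us to state the derivative convergence in $L^1(0,1)$ rather than uniformly and restricts the absolute continuity of $y_0'$ to intervals $[\delta,1]$.
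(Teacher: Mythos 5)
Your proposal is correct and follows essentially the same route as the paper: the paper likewise bounds the kernel by $|\mathcal{L}_{-\frac{1}{2}}(r,s)|\le r\sqrt{r/s}$ for $0<s\le r\le 1$ (equivalent to $s\log(r/s)\le r$, marginally weaker than your sharp $r/e$), derives by induction the estimates $|\psi_k^{(j)}(r)|\le \sqrt{r}\,r^{k-j}\bigl(\int_0^r|V(s)|ds\bigr)^k/k!$ for $j=0,1$, and concludes via the Weierstrass M-test and the recurrence for $\psi_k''$ exactly as you do. The additional details you supply (the explicit formula for $\partial_r\mathcal{L}_{-\frac{1}{2}}$ and the Gronwall-type uniqueness remark) are consistent with the paper, which delegates those points to the cited reference.
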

	
	\begin{proof}
		The proof follows the same approach as in \cite[Th. 47]{mineradial1}. Using the inequality $|\mathcal{L}_{-\frac{1}{2}}(r,s)|\leq r\sqrt{\frac{r}{s}}$ for $0<s<r\leq 1$ \cite[Lemma 45]{mineradial1}, an induction argument yields the estimates
		\begin{align*}
			|(\psi_k)^{(j)}(r)|\leq \frac{\sqrt{r}r^{k-j}}{k!}\left(\int_0^r|V(r)|dr\right)^k, \quad k\in \mathbb{N}, j=0,1. 
		\end{align*}
		By the Weierstrass M-tests, the series \eqref{eq:seriesbesselsingular} converges absolutely and uniformly $[0,1]$, and the series of the first derivative converges in $L\pot 1(0,1)$. The proof that $y_0$ is a solution of Eq. \eqref{eq:besselsingular} and satisfies the asymptotic \eqref{eq:asymptsingular} is similar to that of Proposition \ref{Prop:Solbessel}.
	\end{proof}	
	
	In the case $d=2$, the generating formal polynomials take the form
	\begin{equation}\label{eq:planarpol}
		\mathcal{V}_m(r,\theta)=r\pot{|m|}\alpha_{|m|}(r)e\pot{im\theta},\quad m\in \mathbb{Z},\; 0\menor r\menor 1,\; 0\leq \theta\leq 2\pi.
	\end{equation}
	
	Applying the same procedure as in Theorem \ref{Th:Sol1}, and using Lemma \ref{Lemma:boundsalpha} (i), we have that $\{\mathcal{V}_m\}_{m\in \mathbb{Z}\setminus\{0\}}$ is orthogonal in $W\pot{1,2}(\mathbb{B}\pot 2)$ (because $r\pot{2m+d-3}=r\pot{2m-1}$ is integrable for $m\in \mathbb{N}$). The critical case occurs at $m=0$, because asymptotic \eqref{eq:asymptsingular} implies that $\alpha_0(r)=o\left(\frac{1}{r}\right)$ as $r\rightarrow 0\pot +$. Hence, we cannot guarantee that $\nabla \mathcal{V}_0$ belongs to $L\pot 2(\mathbb{B}\pot 2)$. However, since $\alpha_0(0)=1$ and $\alpha_0$ is continuous, it follows that $\mathcal{V}_0\in L\pot 2(\mathbb{B}\pot 2)$. In conclusion.
	\begin{proposition}
		For $d=2$, and $V\in L_1^1(0,1)$ satisfying \eqref{eq:hyphotesesTeschl}, the system $\{\mathcal{V}_m\}_{m\in \mathbb{Z}}\subset L\pot 2(\mathbb{B}\pot 2)\cap \operatorname{Sol}_V\pot {dist}(\mathbb{B}\pot 2)$ and is orthogonal in $L\pot 2(\mathbb{B}\pot 2)$.
	\end{proposition}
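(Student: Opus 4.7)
The plan is to check three things in turn: membership in $L^2(\mathbb{B}^2)$, orthogonality, and that each $\mathcal{V}_m$ is a distributional solution. The first two are routine; the third requires care at $m=0$, where $\alpha_0$ lacks the regularity of the higher modes.

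For the $L^2$-inclusion, Lemma \ref{Lemma:boundsalpha}(i) (which, as the author notes, remains valid for $m\geq 1$ in dimension two) gives $|\alpha_{|m|}(r)|\leq C_0$ on $[0,1]$, so $|\mathcal{V}_m(r,\theta)|^2\leq C_0^2 r^{2|m|}$, which is integrable against $r\,drd\theta$. For $m=0$, the preceding proposition shows that the series defining $y_0$ converges uniformly on $[0,1]$, so $\alpha_0(r)=y_0(r)/\sqrt r$ extends continuously to $[0,1]$ with $\alpha_0(0)=1$; hence $\mathcal{V}_0$ is bounded, and in particular square integrable on $\mathbb{B}^2$.

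Orthogonality in $L^2(\mathbb{B}^2)$ follows by separating variables in polar coordinates:
\begin{equation*}
\langle \mathcal{V}_m,\mathcal{V}_n\rangle_{L^2(\mathbb{B}^2)}
=\int_0^1 r^{|m|+|n|+1}\alpha_{|m|}(r)\overline{\alpha_{|n|}(r)}\,dr\cdot\int_0^{2\pi}e^{i(m-n)\theta}\,d\theta,
\end{equation*}
and the angular integral equals $2\pi\,\delta_{mn}$.

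For the distributional equation, I would mimic the tail of the proof of Theorem \ref{Th:Sol1}. For $m\in \mathbb{Z}\setminus\{0\}$, since $\alpha_{|m|}$ satisfies the ODE $\alpha''+\frac{2|m|+1}{r}\alpha'-V\alpha=0$ a.e.\ on $(0,1)$, a direct computation using the polar form of $\Delta$ shows $\mathcal{V}_m$ satisfies $-\Delta\mathcal{V}_m+V\mathcal{V}_m=0$ a.e.\ in $A_\varepsilon(0)$. Applying Green's identity on $A_\varepsilon(0)$ against $\varphi\in\mathscr{D}(\mathbb{B}^2)$ (noting $\varphi$ vanishes near $\mathbb{S}^1$), the remaining boundary term on $S_\varepsilon^2(0)$ is bounded using Lemma \ref{Lemma:boundsalpha}(i),(ii) by $C(|m|+1)\|\varphi\|_{L^\infty}\,\varepsilon^{|m|}$, which tends to $0$ since $|m|\geq 1$. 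The dominated convergence theorem handles passing to the limit in the volume integral, using $|\mathcal{V}_m|\leq C_0$ and $\int_{\mathbb{B}^2}|V\mathcal{V}_m|\leq 2\pi C_0\|V\|_{L^1_1(0,1)}$.

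The main obstacle is the case $m=0$, which is exactly the critical case flagged in the text. Here the same Green's identity reduces the problem to showing that $\varepsilon\,\alpha_0'(\varepsilon)\to 0$ as $\varepsilon\to 0^+$, since the surface integral on $S_\varepsilon^2(0)$ becomes
\begin{equation*}
\int_{S_\varepsilon^2(0)}\bigl(-\alpha_0(\varepsilon)\partial_r\varphi+\varphi\,\alpha_0'(\varepsilon)\bigr)\,d\sigma,
\end{equation*}
and the first piece is $O(\varepsilon)$. From $y_0'(r)=\frac{1}{2\sqrt r}+\sum_{k\geq 1}\psi_k'(r)$ with $|\psi_k'(r)|\leq \frac{\sqrt r\,r^{k-1}}{k!}\|V\|_{L^1}^k$, I obtain $\sqrt r\,y_0'(r)=\tfrac12+O(r)$; similarly $y_0(r)/(2\sqrt r)=\tfrac12+O(r)$. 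Subtracting these expansions in the identity
\begin{equation*}
r\alpha_0'(r)=\sqrt r\,y_0'(r)-\frac{y_0(r)}{2\sqrt r}
\end{equation*}
yields $r\alpha_0'(r)=O(r)\to 0$, which closes the argument. Dominated convergence (with $\alpha_0\in L^\infty$ and $|V\alpha_0|\in L^1_1(0,1)$) delivers $\int_{\mathbb{B}^2}\mathcal{V}_0(-\Delta\varphi+V\varphi)=0$, completing the verification that $\mathcal{V}_0\in\operatorname{Sol}_V^{dist}(\mathbb{B}^2)$.
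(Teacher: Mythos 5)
Your proof is correct and follows essentially the same route as the paper: the $L^2$ bound and orthogonality via separation of variables, and the distributional property via Green's identity on the annulus $A_\varepsilon(0)$ with the boundary terms controlled by Lemma \ref{Lemma:boundsalpha} (the "same procedure as in Theorem \ref{Th:Sol1}" that the paper invokes). Your explicit verification that $r\alpha_0'(r)=\sqrt{r}\,y_0'(r)-\frac{y_0(r)}{2\sqrt{r}}=O(r)$ supplies the detail for the critical case $m=0$ that the paper only gestures at.
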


	In this case, the Poisson kernel has the form
	\begin{equation}\label{eq:poissonplanar}
		P_V(r,\theta,\vartheta)=\sum_{m\in \mathbb{Z}}\frac{r\pot{|m|}\alpha_{|m|}(r)}{\alpha_{|m|}(1)}e\pot{im(\theta-\vartheta)}.
	\end{equation}
	
	We recall that $\varphi\in L\pot{2}(\mathbb{S}\pot 1)$ can be identified with a periodic function $\varphi\in L\pot 2(0,2\pi)$, and 
	\begin{equation}\label{eq:1dfourierseries}
		\varphi(\theta)=\sum_{m\in \mathbb{Z}}\widehat{\varphi}_me\pot{im\theta},\quad \widehat{\varphi}_m=\frac{1}{2\pi}\int_0\pot{2\pi}\varphi(\theta)e\pot{-im\theta}d\theta,\; m\in \mathbb{Z}.
	\end{equation}
	
	\begin{lemma}\label{Lemma:fourierseriesinfinity}
		If $\varphi\in \mathscr{D}(\mathbb{S}^1)$, then $\frac{\partial ^ju_{\varphi}}{\partial \theta^j}\in C(\overline{\mathbb{B}^2})$ for all $j\in \mathbb{N}_0$, and $\varphi_r\rightarrow \varphi$ in $\mathscr{D}(\mathbb{S}^1)$ as $r\rightarrow 1^-$.
	\end{lemma}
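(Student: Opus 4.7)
The plan is to exploit the rapid decay of the Fourier coefficients of a smooth function on $\mathbb{S}^1$, together with uniform bounds on the radial amplitudes $\alpha_m$ and on $1/\alpha_m(1)$. First I would verify that Lemma \ref{Lemma:boundsalpha}(i) and Lemma \ref{Lemma:boundedsequence} transfer to the planar setting: for $m\ge 1$ their proofs go through verbatim because $\ell_m=m-\tfrac{1}{2}>0$ and only the SPPS estimates of Proposition \ref{Prop:Solbessel} are invoked, while the critical index $m=0$ is handled by the explicit series construction of $y_0$ given in the previous proposition, which yields $|\alpha_0(r)|\le e^{\|V\|_{L^1(0,1)}}$ on $[0,1]$; Assumption \ref{assumption1} together with $\alpha_m(1)\to 1$ as $m\to\infty$ then gives $C_4:=\sup_m|\alpha_{|m|}(1)|^{-1}<\infty$. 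Call $C_0$ and $C_4$ the resulting uniform constants, exactly as in the higher-dimensional case.

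Next, since $\varphi\in C^\infty(\mathbb{S}^1)$, repeated integration by parts in the formula $\widehat{\varphi}_m=\tfrac{1}{2\pi}\int_0^{2\pi}\varphi(\theta)e^{-im\theta}d\theta$ yields $|\widehat{\varphi}_m|=O(|m|^{-N})$ for every $N\in\mathbb{N}$. For any fixed $j\in\mathbb{N}_0$, differentiating the representation
\[
u_\varphi(r,\theta)=\sum_{m\in\mathbb{Z}}\frac{r^{|m|}\alpha_{|m|}(r)}{\alpha_{|m|}(1)}\widehat{\varphi}_m\,e^{im\theta}
\]
termwise $j$ times in $\theta$ introduces factors $(im)^j$, and the $m$-th term is then bounded on $\overline{\mathbb{B}^2}$ by $C_0C_4|m|^j|\widehat{\varphi}_m|$, whose sum over $m\in\mathbb{Z}$ is finite by the rapid decay. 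The Weierstrass $M$-test gives absolute and uniform convergence of the differentiated series on $\overline{\mathbb{B}^2}$, which legitimises termwise differentiation, and since every summand is continuous on $\overline{\mathbb{B}^2}$, so is $\partial_\theta^j u_\varphi$.

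For the convergence $\varphi_r\to\varphi$ in $\mathscr{D}(\mathbb{S}^1)$, note that $\mathcal{V}_m(1,\theta)=\alpha_{|m|}(1)e^{im\theta}$, so the uniformly convergent series delivers $u_\varphi(1,\theta)=\sum_m\widehat{\varphi}_m e^{im\theta}=\varphi(\theta)$, and the same reasoning applied to the $j$-th $\theta$-derivative gives $\partial_\theta^ju_\varphi(1,\theta)=\partial_\theta^j\varphi(\theta)$. Because $\partial_\theta^ju_\varphi\in C(\overline{\mathbb{B}^2})$ and $\overline{\mathbb{B}^2}$ is compact, this function is uniformly continuous there, hence $\partial_\theta^j\varphi_r=\partial_\theta^ju_\varphi(r,\cdot)\to\partial_\theta^j\varphi$ uniformly on $\mathbb{S}^1$ as $r\to 1^-$ for every $j$, which is exactly convergence in the Fréchet topology of $\mathscr{D}(\mathbb{S}^1)$. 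No genuine obstacle arises; the only point deserving care is the extension of the uniform bounds on $\alpha_m$ and $1/\alpha_m(1)$ to the planar case at the singular index $m=0$, which however is immediate from the explicit construction of $y_0$ stated in this section.
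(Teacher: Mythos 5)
Your proposal is correct and follows essentially the same route as the paper: termwise $\theta$-differentiation justified by the Weierstrass $M$-test using the uniform bounds $C_0$, $C_4$ together with the decay of the Fourier coefficients of a smooth function, followed by uniform continuity on the compact set $\overline{\mathbb{B}^2}$ to get $\partial_\theta^j\varphi_r\to\partial_\theta^j\varphi$ uniformly. The only cosmetic differences are that the paper bounds the differentiated series via $\widehat{\varphi}_m=\widehat{\varphi^{(j)}}_m/(im)^j$ and the absolute summability of $\{\widehat{\varphi^{(j)}}_m\}$ rather than the equivalent rapid-decay estimate $|\widehat{\varphi}_m|=O(|m|^{-N})$, and your explicit check of the bounds on $\alpha_0$ and $1/\alpha_{|m|}(1)$ at the singular index $m=0$ is a worthwhile point that the paper leaves implicit.
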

	\begin{proof}
		Given $j\in \mathbb{N}_0$, integration by parts yields
		\[
		\widehat{\varphi}_m=\frac{\widehat{\varphi^{(j)}}_m}{(im)^j} \;\;\text{ and then }\;\; \sum_{m\in \mathbb{Z}}|(im)^j\widehat{\varphi}_me^{im\theta}|=\sum_{m\in \mathbb{Z}}|\widehat{\varphi^{(j)}}_m|
		\]
		Furthermore, according to \cite[Ch. 1, Th. 6.2]{katznelson}, the series of the Fourier coefficients of $\varphi^{(j)}$ converges absolutely. By the Weierstrass M test, the series of $\varphi^{(j)}$ converges absolutely and uniformly on $\mathbb{S}^1$, that is, the Fourier series of $\varphi$ converges in $\mathscr{D}(\mathbb{S}^1)$. Now, the series of the partial derivatives in $\theta$ of $u_{\varphi}$ satisfies the estimates
		\[
		\sum_{m\in \mathbb{Z}}\left|\frac{r^m\alpha_m(r)\widehat{\varphi}_m (im)^je^{im\theta}}{\alpha_m(1)}\right|\leq C_0 C_4\sum_{m\in \mathbb{Z}}|\widehat{\varphi^{(j)}}_m|<\infty.
		\]
		
		Consequently, the series of $\frac{\partial^ju_{\varphi}}{\partial \theta^j}$ converges absolutely and uniformly on $\overline{\mathbb{B}^2}$, and $\frac{\partial^ju_{\varphi}}{\partial \theta^j}\in C(\overline{\mathbb{B}^2})$. By the uniform continuity, $\varphi_r^{(j)}=\frac{\partial^ju_{\varphi}(r,\cdot)}{\partial \theta^j}\rightarrow \frac{\partial^ju_{\varphi}}{\partial \theta^j}\big{|}_{r=1}=\varphi^{(j)}$ uniformly on $\mathbb{S}^1$ as $r\rightarrow 1^{-1}$ for all $j\in \mathbb{N}_0$, that is, $\varphi_r\rightarrow \varphi$ in $\mathscr{D}(\mathbb{S}^1)$.
	\end{proof}
	
	If $f\in \mathscr{D}'(\mathbb{S}^1)$, then $f=\sum_{m\in \mathbb{Z}}\widehat{f}_m$, where $\widehat{f}_m:= (f|e\pot{-im\theta})_{\mathscr{D}(\mathbb{S}\pot 1)}, m\in \mathbb{Z}$, and the series converges in $\mathscr{D}'(\mathbb{S}\pot 1)$ with the estimates $|\widehat{f}_m|\leq C|m|\pot{\beta}$, for some $C\mayor 0$ and $\beta\in \mathbb{R}$ \cite[Lemma 2]{estrada}.
	\begin{theorem}
		Given $f\in \mathscr{D}'(\mathbb{S}\pot 1)$, the function 
		\begin{equation}\label{eq:seriesf2d}
			u_f(r,\theta):=\sum_{m\in \mathbb{Z}}\frac{r^m\alpha_{|m|}(r)}{\alpha_{|m|}(1)}\widehat{f}_me^{im\theta},
		\end{equation}
		belongs to $\operatorname{Sol}_V\pot {dist}(\mathbb{B}^2)$ with $(u_f)_{db}=f$.
	\end{theorem}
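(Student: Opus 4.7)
The plan is to mirror the argument of Theorem \ref{Th:Sol2dist}, replacing the spherical-harmonic machinery with ordinary Fourier series on $\mathbb{S}^1$. (The exponent $r^m$ in \eqref{eq:seriesf2d} must be read as $r^{|m|}$, in accordance with \eqref{eq:planarpol}.)

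First, I would establish absolute and uniform convergence of \eqref{eq:seriesf2d} on each disk $\{|x|\leq r_0<1\}$. The estimate $|\widehat{f}_m|\leq C|m|^\beta$ from \cite[Lemma 2]{estrada} combines with the uniform bound $|\alpha_{|m|}(r)|\leq C_0$ (Lemma \ref{Lemma:boundsalpha}(i) for $|m|\geq 1$, and continuity of $\alpha_0$ on $[0,1]$ for $m=0$) and with the bound $1/|\alpha_{|m|}(1)|\leq C_4$, which I would obtain from the planar analog of Lemma \ref{Lemma:boundedsequence}: the same estimate \eqref{eq:estimateseries1} applied with $\ell=\ell_m=m-\tfrac12$ yields $\alpha_m(1)\to 1$ as $m\to\infty$, so under Assumption \ref{assumption1} the sequence $\{1/\alpha_{|m|}(1)\}$ is bounded. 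Together these give
$$\left|\frac{r^{|m|}\alpha_{|m|}(r)}{\alpha_{|m|}(1)}\widehat{f}_m e^{im\theta}\right|\leq C_0 C_4 C|m|^\beta r^{|m|},$$
and the ratio test closes the argument. Hence $u_f\in C(\mathbb{B}^2)\subset L^1_{\mathrm{loc}}(\mathbb{B}^2)$.

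Second, to verify $u_f\in \operatorname{Sol}_V^{dist}(\mathbb{B}^2)$, I note that each term of \eqref{eq:seriesf2d} is a scalar multiple of the formal polynomial $\mathcal{V}_m$ from \eqref{eq:planarpol}, which (by the two-dimensional counterpart of Theorem \ref{Th:Sol1}) lies in $\operatorname{Sol}_V^{dist}(\mathbb{B}^2)$. Given $\phi\in\mathscr{D}(\mathbb{B}^2)$ with $\operatorname{supp}\phi\subset B_{r_0}^2(0)$, the uniform convergence of the series on $B_{r_0}^2(0)$, combined with $V\in L^1(0,1)$ on the slice integrals, lets me exchange sum and integral in $\int_{\mathbb{B}^2}u_f(-\Delta\phi+V\phi)$, yielding zero.

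Third, to identify the distributional boundary value, fix $\varphi\in\mathscr{D}(\mathbb{S}^1)$ and set $f_r(\theta):=u_f(r,\theta)$. Using the Fourier expansion $\varphi=\sum_n\widehat{\varphi}_n e^{in\theta}$ and termwise integration (justified by uniform convergence on $\{|x|=r\}$), I get
$$\int_0^{2\pi}f_r(\theta)\varphi(\theta)\,d\theta=2\pi\sum_{m\in\mathbb{Z}}\frac{r^{|m|}\alpha_{|m|}(r)}{\alpha_{|m|}(1)}\widehat{f}_m\widehat{\varphi}_{-m}.$$
On the other hand, setting $\varphi_r(\theta):=u_\varphi(r,\theta)$ with $u_\varphi$ the planar analog of \eqref{eq:DSsolution} applied to $\varphi$, the Fourier series of $\varphi_r$ converges in $\mathscr{D}(\mathbb{S}^1)$ (by Lemma \ref{Lemma:fourierseriesinfinity}), so the continuity of $f$ on $\mathscr{D}(\mathbb{S}^1)$ permits termwise pairing, giving the same quantity; i.e., $\int f_r\varphi\,d\theta=(f|\varphi_r)_{\mathscr{D}(\mathbb{S}^1)}$. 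Lemma \ref{Lemma:fourierseriesinfinity} then furnishes $\varphi_r\to\varphi$ in $\mathscr{D}(\mathbb{S}^1)$, and taking $r\to 1^-$ yields $\int f_r\varphi\,d\theta\to (f|\varphi)_{\mathscr{D}(\mathbb{S}^1)}$, hence $(u_f)_{db}=f$.

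The main technical obstacle I anticipate is justifying the duality identity $\int_0^{2\pi}f_r\varphi\,d\theta=(f|\varphi_r)_{\mathscr{D}(\mathbb{S}^1)}$, which requires pulling the distribution $f$ inside an infinite sum on both sides. This hinges on having $\mathscr{D}(\mathbb{S}^1)$-convergence (not just uniform convergence) of the partial sums of $\varphi_r$ and of the Fourier series that computes $\widehat{f}_m$; Lemma \ref{Lemma:fourierseriesinfinity} was designed precisely for this purpose, and once it is invoked the remainder of the proof is a routine transcription of the $d\geq 3$ argument.
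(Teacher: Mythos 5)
Your proposal is correct and follows essentially the same route as the paper's proof, which simply repeats the argument of Theorem \ref{Th:Sol2dist} in the Fourier-series setting and invokes Lemma \ref{Lemma:fourierseriesinfinity} to get $\varphi_r\to\varphi$ in $\mathscr{D}(\mathbb{S}^1)$ — exactly the upgrade from $C(\Sd)$-convergence that is needed now that $f$ is a general distribution rather than a Radon measure. Your extra care with the $m=0$ term and the reading $r^{|m|}$ only fills in details the paper leaves implicit.
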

	\begin{proof}
		
		By repeating the same procedures as in Theorem \ref{Th:Sol2dist}, the series \eqref{eq:seriesf2d} converges uniformly for $|r|\leq R\menor 1$ and $0\leq \theta\leq 2\pi$. Again, using the same argument as in Theorem \ref{Th:Sol2dist}, we have
		\begin{equation}\label{eq:auxfourier1}
			\int_0^{2\pi}f_r(\theta)\varphi(\theta)d\theta=(f|\varphi_r)_{\mathscr{D}(\mathbb{S}\pot 1)}.
		\end{equation}
		By Lemma \ref{Lemma:fourierseriesinfinity}, $\varphi_r\rightarrow\varphi$ in $\mathscr{D}'(\mathbb{S}^1)$ as $r\rightarrow 1^{-}$. Consequently,  the right-hand side of \eqref{eq:auxfourier1} tends to $(f|\varphi)_{\mathscr{D}(\mathbb{S}^1)}$,  which implies that $(u_f)_{db}=f$, as desired. 
	\end{proof}

	\section{Conclusions}
	We present an explicit construction of an orthogonal system of solutions for the radial Schrödinger equation with a radial, complex-valued, integrable potential. This system arises from an explicit construction of solutions to a family of perturbed Bessel equations. An algorithm is provided to facilitate the practical computation of the system.
	
	We establish the conditions under which this system forms an orthogonal basis for the space of weak solutions. Additionally, we construct solutions to the Dirichlet problem with trace boundary conditions and identify sufficient and necessary conditions on the potential that guarantee the uniqueness of the solution.
	
	Finally, we determine the conditions for the existence of a generalized Poisson kernel and solve the Dirichlet problem with distributional boundary conditions.
	\newline

	{\bf Acknowlegdments:}
	The author thanks to Instituto de Matemáticas de la U.N.A.M. Unidad Querétaro (México), where this
	work was developed, and the SECIHTI for their support through the program {\it Estancias Posdoctorales por
		México Convocatoria 2023 (I)}.
	\newline
	
	{\bf Conflict of interest:} This work does not have any conflict of interest.

	\appendix
	
	\section{Appendix}\label{Sec:Appendix}
	
	\begin{proof}[Proof of Lemma \ref{Lemma:sphericalgradient} ]

		Let us consider the spherical coordinates $x=r\xi$, where 
		\begin{equation}\label{eq:apsphericalcoord}
			\xi_i=\begin{cases}
				\prod_{j=1}\pot{d-1}\sin\theta_{j},\quad \text{ if  }\; i=1,\\
				\cos\theta_{i-1}\prod_{j=i}\pot{d-1}\sin\theta_j,\quad \text{ if   }\; 2\leq j\leq d,
			\end{cases}
		\end{equation}
		with $0\menor \theta_1\menor \pi$, $0\menor \theta_j\menor 2\pi$, $j=2,\dots,d-1$. Here, we use the convention that $\prod_{\emptyset}=1$. Let $\Phi:(0,\infty)\times (0,\pi)\times (0,2\pi)\pot{d-1}\rightarrow \mathbb{R}\pot d\setminus\{0\}$ be the spherical changes of variables given by $\Phi(r,\theta_1,\dots,\theta_{d-1})=r\xi$. We identify the radial vector $\mathbf{r}$ with the corresponding point on the unit sphere $\xi\in \Sd$. Hence, the Jacobian matrix of the transformation $\Phi$ is given by $\Phi'=\left[\mathbf{r},\frac{\partial \Phi}{\partial \theta_i},\dots,\frac{\partial \Phi}{\partial \theta_{d-1}}\right]$. Note that $\mathbf{r}$ is the outward normal vector to $\mathbb{S}\pot{d-1}$ at the point $\xi$, and that the tangent space $\mathcal{T}_{\xi}(\mathbb{S}\pot{d-1})$ is spanned by $\left[\frac{\partial \xi}{\partial\theta_i}\right]_{i=1}\pot{d-1}$. Since $\frac{\partial\Phi}{\partial \theta_j}=r\frac{\partial \xi}{\partial \theta_j}$, it follows that  $\mathbf{r}\perp \frac{\partial \Phi}{\partial\theta_i}$, $j=1,\dots, d-1$. The derivatives of \eqref{eq:apsphericalcoord} are
		\begin{equation}
			\left(\frac{\partial \Phi}{\partial \theta_i}\right)_j:=r\cdot 
			\begin{cases}
				\cos\theta_i\prod_{\overset{k=1
					}{k\neq i}}\pot{d-1}\sin\theta_k, & \text{if  }j=1,\\
				\cos\theta_i\cos\theta_{j-1}\prod_{\overset{k=j
					}{k\neq i}}\pot{d-1}\sin\theta_k, & \text{if  }1\menor j\leq i,\\
				-\prod_{k=i}\pot{d-1}\sin\theta_k, & \text{ if  } j=i+1,\\
				0, & \text{ otherwise},
			\end{cases}\quad i=1,\dots, d-1; j=1,\dots, d.
		\end{equation}
		To simplify the notation, we set $c_j:=\cos\theta_j$ and $s_j:=\sin\theta_j$. If $i\menor s$, then
		\begin{align*}
			\frac{\partial\Phi}{\partial\theta_i}\cdot \frac{\partial\Phi}{\partial\theta_s}&=r\pot 2\left\{\prod_{\overset{k=1}{k\neq i}}\pot{d-1}s_kc_i\prod_{\overset{k=1}{k\neq s}}\pot{d-1}s_kc_s+\sum_{j=2}\pot{i}\prod_{\overset{k=j}{k\neq i}}\pot{d-1}s_kc_ic_{j-1}\prod_{\overset{k=1}{k\neq s}}\pot{d-1}s_kc_sc_{j-1}-\prod_{k=i}\pot{d-1}s_k\prod_{\overset{k=i+1}{k\neq s}}\pot{d-1}s_kc_sc_i\right\}\\
			&= r\pot 2 c_ic_s\left\{s_is_s\prod_{\overset{k=1}{k\neq i,s}}\pot{d-1} s_k\pot 2+\sum_{j=2}\pot{i}c_{j-1}\pot 2 s_is_s\prod_{\overset{k=j}{k\neq i,s}}\pot{d-1} s_k\pot 2-s_is_s\prod_{\overset{k=i+1}{k\neq s}}\pot{d-1} s_k\pot 2\right\}\\
			&= r\pot 2 c_ic_ss_is_s\left\{ \prod_{\overset{k=2}{k\neq i,s}}\pot{d-1}s_k\pot 2+\sum_{k=3}\pot i \prod_{\overset{k=j}{k\neq i,s}}\pot{d-1}c_{j-1}\pot 2s_k\pot 2-\prod_{\overset{k=i+1}{k\neq s}}\pot{d-1} s_k\pot 2\right\}\\
			\vdots &\\
			&= r\pot 2 c_ic_ss_is_s\left\{ \prod_{\overset{k=i+1}{k\neq s}}\pot{d-1}s_k\pot 2-\prod_{\overset{k=i+1}{k\neq s}}\pot{d-1}s_k\pot 2\right\}=0.
		\end{align*}
		In the same way,
		\begin{align*}
			\left|\frac{\partial\Phi}{\partial \theta_i}\right|\pot 2 &= r\pot 2 \left\{\prod_{\overset{k=1}{k\neq i}}^{d-1}s_k\pot 2c_i\pot 2+\sum_{j=2}\pot i \prod_{\overset{k=j}{k\neq i}}\pot{d-1}s_k\pot 2c_i\pot 2c_{j-1}\pot 2+\prod_{k=i}\pot{d-1}s_k\pot 2 \right\}\\
			&= r\pot 2\left\{ c_i\pot 2\left[\prod_{\overset{k=1}{k\neq i}}\pot{d-1}s_k\pot 2+\sum_{j=2}\pot i \prod_{\overset{k=j}{k\neq i}}\pot{d-1}s_k\pot 2c_{j-1}\pot 2\right] +\prod_{k=i}\pot{d-1}s_k\pot 2\right\}\\
			&= r\pot 2\left\{ c_i\pot 2\left[\prod_{\overset{k=2}{k\neq i}}\pot{d-1}s_k\pot 2+\sum_{j=3}\pot i \prod_{\overset{k=j}{k\neq i}}\pot{d-1}s_k\pot 2c_{j-1}\pot 2\right] +\prod_{k=i}\pot{d-1}s_k\pot 2\right\}\\
			\vdots &\\
			&= r\pot 2\left\{c_i\pot 2\prod_{k=i+1}\pot{d-1}s_k\pot 2+s_i\pot 2\prod_{k=i+1}\pot{d-1}s_k\pot 2\right\}=r\pot 2\prod_{k=i+1}\pot{d-1}s_k\pot 2.
		\end{align*}
		Since $0\menor \theta_j\menor \pi$ for $j=2,\dots,d-1$, it follows that $\left|\frac{\partial\Phi}{\partial \theta_j}\right|=r\prod_{k=j+1}\pot{d-1}\sin\theta_k$. Let $\Theta_j:=\left|\frac{\partial \xi}{\partial \theta_j}\right|$. Thus, $\Theta_j=\prod_{k=j+1}\pot{d-1}\sin\theta_k$. Since the columns of the matrix $\Phi'$ are orthogonal, we have $(\Phi')\pot T\Phi'=\Phi'(\Phi')\pot T=\operatorname{diag}(1,r^2\Theta_1^2,\dots, r^2\Theta_{d-1}^2)$. Thus, $(\Phi')\pot{-1}=\left[\mathbf{r},\frac{1}{r\pot 2\Theta_1\pot 2}\frac{\partial \Phi}{\partial \theta_1},\dots,\frac{1}{r\pot 2\Theta_{d-1}\pot 2}\frac{\partial \Phi}{\partial \theta_{d-1}}\right]\pot T$.
		By the chain rule $\nabla_{r,\theta_1,\dots,\theta_{d-1}} u(r\xi)=\nabla_x u(x)\Phi'$, and consequently, 
		\begin{align*}
			\nabla_xu(x)&= \nabla_{r,\theta_1,\dots,\theta_{d-1}} u(r\xi)(\Phi')\pot{-1}\\ &=\left(\frac{\partial u}{\partial r},\frac{\partial u}{\partial \theta_1},\dots,\frac{\partial u}{\partial\theta_{d-1}}\right)\left[\mathbf{r},\frac{1}{r\pot 2\Theta_1\pot 2}\frac{\partial \Phi}{\partial \theta_1},\dots,\frac{1}{r\pot 2\Theta_{d-1}\pot 2}\frac{\partial \Phi}{\partial \theta_{d-1}}\right]\pot T\\
			&= \frac{\partial u}{\partial r}\mathbf{r}+\frac{1}{r}\sum_{j=1}\pot{d-1}\frac{1}{\Theta_j}\frac{\partial u}{\partial \theta_j}\widehat{\boldsymbol{\theta}}_j,
		\end{align*}
		where $\widehat{\boldsymbol{\theta}}_j=\frac{1}{\Theta_j}\frac{\partial\xi_j}{\partial\theta_j}$, $j=1,\dots,d-1$.
	\end{proof}
	\newline 
	
	\begin{proof}[Proof of Lemma \ref{Prop:sphericalprop1}] 
		
		Properties (i), (iii), and (iv) can be found in \cite[Ch. V]{axler}, and property (ii) in \cite[Th. 1. 45]{day}. It remains only to prove (v) and (vi).
		\begin{itemize}
			\item[(v)] Let $p\in \mathcal{H}_m(\Sd)$ and $q\in \mathcal{H}_m(\Sd)$, with $m\neq n$, and consider the corresponding homogeneous harmonic polynomials $P(x)=r\pot mp(\xi)$, $Q(x)=r\pot m q(\xi)$. Since $P$ and $Q$ are harmonic, the first Green identity yields
			\[
			\int_{\Bd}\nabla P\cdot \overline{\nabla Q}= \int_{\Sd}P\overline{\frac{\partial Q}{\partial \nu}}d\sigma= m\int_{\Sd}p\overline{q}d\sigma=0.
			\]
			
			Here, we use \eqref{eq:normalder} to obtain $\frac{\partial Q}{\partial \nu}=\frac{\partial r\pot m q}{\partial r}\bigg{|}_{r=1}=mq$. Now, 
			\begin{align*}
				\nabla P\cdot \overline{\nabla Q}= \frac{\partial P}{\partial r}\frac{\partial Q}{\partial r}+\nabla_{\Sd}P\cdot \overline{\nabla_{\Sd}Q}
				=mnr\pot{m+n-2}pq+r\pot{m+n}\nabla_{\Sd}p\cdot \overline{\nabla_{\Sd}q}.
			\end{align*}
			Consequently,
			\begin{align*}
				0=&\int_{\Bd}\nabla P\cdot \overline{\nabla Q}=mn\int_0^1r^{d+m+n-3}dr\int_{\Sd}pqd\sigma  +\int_0^1r^{d-2+m+n}dr\int_{\Sd}\nabla_{\Sd}p\cdot \overline{\nabla_{\Sd}q}d\sigma\\
				=& \frac{1}{d-1+m+n}\int_{\Sd}\nabla_{\Sd}p\cdot \overline{\nabla_{\Sd}q}d\sigma
			\end{align*}
			(the integral over $(0,1)$ in the second line is well-defined because $d-1\geq 0$). Therefore, $\nabla_{\Sd}p\perp_{L\pot 2(\Sd)} \nabla_{\Sd}q$.
			\item[(vi)] Set $P=r\pot m Y_j\pot{(m)}$ and $Q=r\pot m Y_k\pot{(m)}$. First, suppose that $j\neq k$. Applying the same procedure as in the previous point, we obtain
			\begin{align*}
				\nabla P\cdot \overline{\nabla Q}
				&=m\pot 2r\pot{2m-2}Y_j\pot{(m)}Y_k\pot{(m)}+r\pot{2m}\nabla_{\Sd}Y_j\pot{(m)}\cdot \overline{\nabla_{\Sd}Y_k\pot{(m)}}
			\end{align*}
			By \eqref{eq:relatriongradients1},
			\[
			\int_{\Sd} \nabla P\cdot \overline{\nabla Q}d\sigma  =m(d+2m-2)\int_{\Sd}P\overline{Q}d\sigma=0,
			\]
			and also
			\begin{align*}
				\int_{\Sd} \nabla P\cdot \overline{\nabla Q}d\sigma= m\pot 2\int_{\Sd} Y_j\pot{(m)}\overline{Y_k\pot{(m)}}d\sigma+\int_{\Sd} \nabla_{\Sd} Y_j\pot{(m)}\cdot \overline{\nabla Y_k\pot{(m)}}d\sigma.
			\end{align*}
			Since $\int_{\Sd} Y_j\pot{(m)}\overline{Y_k\pot{(m)}}d\sigma=0$, we conclude \eqref{eq:ortogonalgrad1}.
			Finally, for the norm of $\nabla_{\Sd}Y_j\pot{(m)}$, using \eqref{eq:relatriongradients1} we get 
			\begin{align*}
				\int_{\Sd}| Y_j\pot{(m)}|\pot 2d\sigma=\frac{1}{m(d+2m-2)}\left(m\pot 2\int_{\Sd}|Y_j\pot{(m)}|\pot 2d\sigma +\int_{\Sd}|\nabla_{\Sd}Y_j\pot{(m)}|\pot 2d\sigma\right),
			\end{align*}
			from where we obtain
			\begin{align*}
				\frac{1}{m(d+2m-2)}\int_{\Sd}|\nabla_{\Sd}Y_j\pot{(m)}|\pot 2d\sigma=1-\frac{m\pot 2}{m(d+2m-2)}=\frac{m(m+d-2)}{m(d+2m-2)},
			\end{align*}
			and we conclude \eqref{eq:normgradspherical}.
		\end{itemize}
		
	\end{proof}

	
\end{document}